\def\P{\bm P}
\def\GL{\mathop{\hbox{\rm GL}}}
\theoremstyle{plain}
\newtheorem{theorem}{Theorem}[section]
\newtheorem{corollary} {Corollary}[section]
\newtheorem{definition} {Definition}[section]
\newtheorem{lemma} {Lemma}[section]
\newtheorem{remark} {Remark}[section]
\renewenvironment{proof}[1][Proof]{\noindent\textbf{#1.} }{\ \rule{0.5em}{0.5em}}
\begin{document}

\title{Degenerations of Poisson algebras}

\thanks{The second author was  supported by the Centre for Mathematics of the University of Coimbra - UIDB/00324/2020, funded by the Portuguese Government through FCT/MCTES and by the Spanish Government through the Ministry of Universities grant `Margarita Salas', funded by the European Union - NextGenerationEU}

\author[H. Abdelwahab]{Hani Abdelwahab}
\address{Hani Abdelwahab.
\newline \indent Mansoura University, Faculty of Science, Department of Mathematics (Egypt).}
\email{{\tt haniamar1985@gmail.com}}

\author[Amir Fernández Ouaridi]{Amir Fernández Ouaridi}
\address{Amir Fernández Ouaridi. \newline \indent University of Cádiz, Department of Mathematics, Puerto Real (Espa\~na).
\newline \indent University of Coimbra, CMUC, Department of Mathematics, Coimbra
(Portugal).}
\email{{\tt amir.fernandez.ouaridi@gmail.com}}

\author[Cándido Martín González]{Cándido Martín González}
\address{Cándido Martín González.
\newline \indent University of Málaga, Department of Mathematics, Málaga (Espa\~na).}
\email{{\tt candido\_m@uma.es}}


\thispagestyle{empty}

\begin{abstract}
We construct a method to obtain the algebraic classification of Poisson algebras defined on a commutative associative algebra, and we apply it to obtain the classification of the $3$-dimensional Poisson algebras. In addition, we study the geometric classification, the graph of degenerations and the closures of the orbits of the variety of $3$-dimensional Poisson algebras. Finally, we also study the algebraic classification of the Poisson algebras defined on a commutative associative null-filiform or filiform algebra and, to enrich this classification, we study the degenerations between these particular Poisson algebras.

\bigskip

{\it 2020MSC}: 17A30,
17A40,
17B30,
17D99,
14D06,
14L30.

{\it Keywords}: Poisson algebras, algebraic classification, geometric classification, degeneration.
\end{abstract}

\maketitle

\section*{{\protect\Large   } Introduction}

The algebraic and geometric classification of different varieties of low-dimensional algebras is an interesting problem on which numerous works have been published in recent years. The first consists in determining the algebras modulo isomorphisms of a given class. The second consists in determining the irreducible components of a variety of algebras with respect to the Zariski topology. For example, both problems have been studied for $2$-dimensional algebras in \cite{kv16}, $2$-dimensional pre-Lie algebras in \cite{bb09},  
 $2$-dimensional terminal algebras in \cite{cfk19},
 $3$-dimensional Novikov algebras in \cite{bb14},  
 $3$-dimensional Jordan algebras in \cite{gkp},  
 $3$-dimensional Jordan superalgebras in \cite{maria},
 $3$-dimensional Leibniz and $3$-dimensional anticommutative algebras  in \cite{ikv19},
 $4$-dimensional Lie algebras in \cite{BC99}, among many others.  
 Moreover, if we add the works in which certain subvarieties of algebras, such as the subvariety of nilpotent (or solvable) algebras of a given class is considered \cite{degs22, ale, contr11, gkk19, ikv17, kppv, kpv, kv17, S90}, the amount of papers on this topic in last years increases even more.
Nevertheless, the geometry of the varieties of algebras endowed with two multiplications is a subject still to be explored. In this work, we consider a generalization of the geometrical notions and results used for algebras with a single multiplication, established by Grunewald and O'Halloran in \cite{GRH, GRH2}, to algebras with two multiplications, applying these ideas with the purpose of obtaining a geometrical description of this variety to the best known class of algebras of this type: the Poisson algebras.

The first historical encounter with Poisson algebras is in Hamiltonian mechanics (1809), where the algebra of smooth functions on variables the space coordinates $q_i$ and the corresponding momenta $p_i$ is endowed with a Lie bracket which satisfies a Leibniz compatibility rule, namely, the Lie bracket is a derivation. This is achieved by defining the product of two smooth functions by:
$$\left\{f, g\right\} = \sum_{i=1}^n (\frac{\partial f}{\partial p_{i}}\frac{\partial g}{\partial q_i} - \frac{\partial f}{\partial q_i}\frac{\partial g}{\partial p_i}).$$

This structure, the Hamiltonian and the Hamilton equations make a system that is widely used to describe any kind of environment in classical mechanics and that provides a solid theoretical foundation for the later generalization to quantum mechanics. Additionally, there are generalizations for $n$-ary Lie brackets which can handle the sue of $(n-1)$ Hamiltonians through the Nambu-Poisson algebras. These generalizations are motivated by different applications including string theory, M-branes theory and quarks models \cite{nambu, fili, takh}. 
With the later abstraction of many notions of classical Physics, 
Hamiltonian systems were geometrized into manifolds that model the set of all possible configurations of the system, and the cotangent bundle of this manifold describes its phase space, which is endowed with a Poisson algebra \cite{Weinstein}. This construction has been used in a wide range of fields in Physics, including classical mechanics, quantum mechanics, general relativity and geometrical optics, quantization theory, quantum groups, and in Mathematics, including representation theory, linear partial differential equations or completely integrable systems.

The study of all possible Poisson algebras with a certain Lie or associative part is an important problem in the theory of Poisson algebras. In the complex case, some examples of Poisson algebras were constructed on finite dimensional commutative associative algebras \cite{goze10} and a classification of the $3$-dimensional Poisson algebras was given by considering the Poisson structures defined on the $3$-dimensional Lie algebras \cite{gr06}. In this paper, we have considered the problem of classifying the Poisson structures of some finite-dimensional algebras. In particular, we correct a minor mistake in \cite{gr06} on the classification of $3$-dimensional Poisson algebras and we classify other interesting types of Poisson algebras. In addition, we consider the problem of the geometric classification of the varieties defined by this kind of structures in which we find two multiplications linked by a compatibility rule, closing this work with the geometric classification, the complete graph of degenerations and the description of the closures of the orbits of the variety of $3$-dimensional Poisson algebras.

Thus, this work is organized as follows. In the first section (\ref{sec1}), we introduce the classification method that we use to obtain the algebraic classification of the Poisson algebras using the classification of the commutative associative algebras. In the second section (\ref{sec2}), we apply this method to obtain the classification of the $3$-dimensional Poisson algebras. In the third section (\ref{sec3}), we classify Poisson algebras defined on commutative associative null-filiforms and filiforms. In the fourth section (\ref{sec4}), we study the geometry of the variety of Poisson algebras of dimension three. Finally, in the fifth section (\ref{sec5}), we study the degenerations between the Poisson algebras defined on commutative associative null-filiforms and filiforms.

\section{{\protect\Large   } The algebraic classification method}
\label{sec1}

In this section, we stablish a straightforward method to obtain the algebraic classification of the Poisson structures defined over an arbitrary commutative associative algebra. First, let us recall the definition of a Poisson algebra.

\begin{definition} 

A \emph{Poisson algebra}  is a  vector space $\mathcal{P}$ over an arbitrary field $\mathbb F$ equipped with two bilinear operations:

\begin{enumerate}
    \item An commutative associative multiplication, denoted by $-\cdot- :\mathcal{P}\times   \mathcal{P}\longrightarrow \mathcal{P}$;

    \item A Lie algebra multiplication, denoted by $\left\{-, -\right\} :\mathcal{P}\times   \mathcal{P}\longrightarrow \mathcal{P}$.
\end{enumerate}
These two operations are required to satisfy  the following Leibniz identity:
\begin{equation*}
\left\{x\cdot y, z\right\}=\left\{x,z\right\}\cdot y + x\cdot\left\{y,z\right\},
\end{equation*}
for any $x,y,z\in \mathcal{P}$.
  The \emph{dimension}  of the Poisson algebra $\mathcal{P}$ is the dimension of $\mathcal{P}$ as a vector space.
\end{definition}

In this paper, all vector spaces are
assumed to be complex and finite dimensional, unless stated otherwise. For simplicity, every time we write the multiplication table of a Poisson algebra the products of basic elements whose values are zero or can be recovered by the commutativity, in the case of $-\cdot-$, or by the anticommutativity, in the case of $\left\{-, -\right\}$, are omitted. Further, the variety of all $n$-dimensional Poisson algebras over the complex field will be denoted $\P_n$.

\medskip

Now, given an arbitrary commutative associative algebra, we may consider all the Poisson structures defined over this algebra. This notion is captured in the following definition.

\begin{definition}
Let $\left( \mathcal{P},\cdot \right) $ be a commutative associative
algebra. Define $Z^{2}\left( \mathcal{P},\mathcal{P}\right) $ to be the
set of all skew symmetric bilinear maps $\theta :\mathcal{P}\times 
\mathcal{P}\longrightarrow \mathcal{P}$ such that:%
\begin{eqnarray*}
\theta \left( \theta \left( x,y\right) ,z\right) +\theta \left( \theta
\left( y,z\right) ,x\right) +\theta \left( \theta \left( z,x\right)
,y\right)  &=&0, \\
\theta \left( x\cdot y,z\right) -\theta \left( x,z\right) \cdot y-x\cdot
\theta \left( y,z\right)  &=&0,
\end{eqnarray*}
for all $x,y,z$ in $\mathcal{P}$.  Then $Z^{2}\left(\mathcal{P},\mathcal{P}\right)\neq \mathcal{\varnothing }$ since $\theta=0\in Z^{2}\left(\mathcal{P},\mathcal{P}\right)$.
\end{definition}

Observe that, for $\theta \in Z^{2}\left( \mathcal{P},\mathcal{P}\right)$, if we define a bracket $\left\{
-,-\right\} _{\theta }$ on $\mathcal{P}$ by $\left\{ x,y\right\} _{\theta
}=\theta \left( x,y\right) $ for all $x,y$ in $\mathcal{P}$, then $\left( 
\mathcal{P},\cdot ,\left\{ -,-\right\} _{\theta }\right) $ is a Poisson
algebra. Conversely, if $\left( \mathcal{P},\cdot ,\left\{ -,-\right\}
\right) $ is a Poisson algebra, then there exists $\theta \in Z^{2}\left( 
\mathcal{P},\mathcal{P}\right) $ such that $\left( 
\mathcal{P},\cdot ,\left\{ -,-\right\} _{\theta }\right) \cong\left( \mathcal{P},\cdot ,\left\{ -,-\right\}
\right)$. To see this, consider the skew symmetric bilinear map $\theta :\mathcal{P}\times \mathcal{P%
}\longrightarrow \mathcal{P}$ defined by $%
\theta \left( x,y\right) =\left\{ x,y\right\} $ for all $x,y$ in $\mathcal{P}
$. Then $\theta \in Z^{2}\left( 
\mathcal{P},\mathcal{P}\right) $ and $\left( 
\mathcal{P},\cdot ,\left\{ -,-\right\} _{\theta }\right) =\left( \mathcal{P},\cdot ,\left\{ -,-\right\}
\right)$.

\medskip

Now, let $\left( \mathcal{P},\cdot \right) $ be a commutative associative
algebra and $\textrm{Aut}\left( \mathcal{P}\right) $ be the automorphism group
of $\mathcal{P}$. Then $\textrm{Aut}\left( \mathcal{P}\right) $ acts on $Z^{2}\left( \mathcal{P},\mathcal{P}\right) $ by $$
\left(\theta *\phi\right)  \left( x,y\right) =\phi ^{-1}\left( \theta \left( \phi \left(
x\right) ,\phi \left( y\right) \right) \right),$$ for $\phi \in \textrm{Aut}%
\left( \mathcal{P}\right) $, and $\theta \in Z^{2}\left( \mathcal{P},%
\mathcal{P}\right) $.
\begin{lemma}
Let $\left( \mathcal{P},\cdot \right) $ be a commutative associative algebra
and $\theta ,\vartheta \in Z^{2}\left( \mathcal{P},\mathcal{P}\right) $.
Then $\left( \mathcal{P},\cdot ,\left\{ -,-\right\} _{\theta }\right) $ and $%
\left( \mathcal{P},\cdot ,\left\{ -,-\right\} _{\vartheta }\right) $ are
isomorphic if and only if there is a $\phi \in \textrm{Aut}\left( \mathcal{P}%
\right) $ with $\theta *\phi =\vartheta $.
\end{lemma}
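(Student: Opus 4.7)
The plan is to unwind the definitions: a Poisson isomorphism between $(\mathcal{P},\cdot,\{-,-\}_\theta)$ and $(\mathcal{P},\cdot,\{-,-\}_\vartheta)$ is, by definition, a linear bijection that preserves both multiplications simultaneously. Since the commutative associative product $\cdot$ is the \emph{same} on both sides, preservation of $\cdot$ says exactly that such a map lies in $\textrm{Aut}(\mathcal{P},\cdot)$, so the whole content of the statement is the translation of ``preserves the bracket'' into the equation $\theta*\phi=\vartheta$.

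For the ``if'' direction, I would take $\phi\in\textrm{Aut}(\mathcal{P})$ satisfying $\theta*\phi=\vartheta$ and verify directly that $\phi$ is a Poisson isomorphism. Compatibility with $\cdot$ is automatic, and for the brackets I would compute, for all $x,y\in\mathcal{P}$,
\[
\phi\bigl(\{x,y\}_{\vartheta}\bigr)=\phi\bigl(\vartheta(x,y)\bigr)=\phi\bigl((\theta*\phi)(x,y)\bigr)=\phi\bigl(\phi^{-1}(\theta(\phi(x),\phi(y)))\bigr)=\theta(\phi(x),\phi(y))=\{\phi(x),\phi(y)\}_{\theta}.
\]

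For the ``only if'' direction, suppose $\psi:(\mathcal{P},\cdot,\{-,-\}_\vartheta)\longrightarrow(\mathcal{P},\cdot,\{-,-\}_\theta)$ is a Poisson isomorphism. Preservation of $\cdot$ gives $\psi\in\textrm{Aut}(\mathcal{P},\cdot)$. Preservation of the bracket says $\psi(\vartheta(x,y))=\theta(\psi(x),\psi(y))$ for all $x,y$, which rearranges to $\vartheta(x,y)=\psi^{-1}(\theta(\psi(x),\psi(y)))=(\theta*\psi)(x,y)$, i.e.\ $\theta*\psi=\vartheta$, so $\phi:=\psi$ works.

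The only possible obstacle is a bookkeeping one: making sure the action convention $(\theta*\phi)(x,y)=\phi^{-1}(\theta(\phi(x),\phi(y)))$ is used consistently in both directions (otherwise one would end up with $\phi^{-1}$ instead of $\phi$ as the witnessing automorphism), but the computation above shows that the convention chosen in the paper is exactly the one making $\phi$ itself the isomorphism, so no inversion is needed.
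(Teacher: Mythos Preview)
Your proof is correct and follows essentially the same approach as the paper's own proof: both directions are handled by unwinding the definitions, showing that $\phi$ is the desired isomorphism in the ``if'' direction via $\phi(\vartheta(x,y))=\theta(\phi(x),\phi(y))$, and recovering $\theta*\phi=\vartheta$ from the bracket-preservation condition in the ``only if'' direction. Your version is slightly more explicit in writing out the chain of equalities and in flagging the convention issue, but the argument is the same.
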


\begin{proof}
If $\theta *\phi =\vartheta  $, then $\phi :\left( \mathcal{P},\cdot ,\left\{
-,-\right\} _{\vartheta }\right) \longrightarrow $ $\left( \mathcal{P},\cdot
,\left\{ -,-\right\} _{\theta }\right) $ is an isomorphism since $\phi
\left( \vartheta \left( x,y\right) \right) =\theta \left( \phi \left(
x\right) ,\phi \left( y\right) \right) $. On the other hand, if $\phi
:\left( \mathcal{P},\cdot ,\left\{ -,-\right\} _{\vartheta }\right)
\longrightarrow $ $\left( \mathcal{P},\cdot ,\left\{ -,-\right\} _{\theta
}\right) $ is an isomorphism of Poisson algebras, then $\phi \in \textrm{Aut}%
\left( \mathcal{P}\right) $ and $\phi \left( \left\{ x,y\right\} _{\vartheta
}\right) =\left\{ \phi \left( x\right) ,\phi \left( y\right) \right\}
_{\theta }$. Hence $\vartheta \left( x,y\right) =\phi ^{-1}\left( \theta
\left( \phi \left( x\right) ,\phi \left( y\right) \right) \right)= \left(\theta *
\phi \right)\left( x,y\right) $ and therefore $\theta *\phi =\vartheta  $.
\end{proof}

\bigskip

Hence, we have a procedure to classify the Poisson algebras associated to a given commutative associative algebra $\left( \mathcal{P},\cdot \right) 
$. It consists of three steps:

\begin{enumerate}
\item Compute $Z^{2}\left( \mathcal{P},\mathcal{P}\right) $.

\item Find the orbits of $\textrm{Aut}\left( \mathcal{P}\right) $ on $%
Z^{2}\left( \mathcal{P},\mathcal{P}\right) $.
\item  Choose a representative $\theta$ from each orbit and then construct the Poisson algebra $\left( \mathcal{P},\cdot
,\left\{ -,-\right\} _{\theta }\right) $.
\end{enumerate}

\begin{remark}

Similarly, we can construct an analogous method for classifying the 3-dimensional Poisson algebras from the classification of Lie algebras of dimension three. However, since we are also looking for a classification of Poisson algebra with its associated commutative associative algebra to be either null-filiform or filiform, we have to consider the method that starts with commutative associative algebra.

\end{remark}

\bigskip

Let us introduce the following notations. Let $e_{1},e_{2},\ldots ,e_{n}$ be a fixed basis of a commutative
associative algebra $\left( \mathcal{P},\cdot \right) $. Define $\mathrm{%
\Lambda }^{2}\left( \mathcal{P},\mathbb{F}\right) $ to be the space of all
skew symmetric bilinear forms on $\mathcal{P}$. Then  $\mathrm{\Lambda }%
^{2}\left( \mathcal{P},\mathbb{F}\right) =\left\langle \Delta
_{i,j}:1\leq i<j\leq n\right\rangle $ where $\Delta _{i,j}$ is the skew-symmetric bilinear form $\Delta _{i,j}:\mathcal{P}\times \mathcal{P}%
\longrightarrow \mathbb{F}$\ defined by%
\[
\Delta _{i,j}\left( e_{l},e_{m}\right) :=\left\{ 
\begin{tabular}{ll}
$1,$ & if $\left( i,j\right) =\left( l,m\right) ,$ \\ 
$-1,$ & if $\left( i,j\right) =\left( m,l\right), $ \\ 
$0,$ & otherwise.%
\end{tabular}%
\right. 
\]
Now, if $\theta \in Z^{2}\left( \mathcal{P},\mathcal{%
P}\right) $, then $\theta $ can be uniquely written as $\theta \left(
x,y\right) =\underset{i=1}{\overset{n}{\sum }}B_{i}\left( x,y\right) e_{i}$
where $B_{1},B_{2},\ldots ,B_{n}$ is a sequence of skew symmetric bilinear
forms on $\mathcal{P}$. Also, we may write $\theta =\left(
B_{1},B_{2},\ldots ,B_{n}\right) $ . Let $\phi ^{-1}
\in \textrm{Aut}\left( \mathcal{P}\right) $ be given by the matrix $\left( b_{ij}\right)$. If $\left(\theta *\phi\right)  \left( x,y\right) =\underset{i=1}{\overset{n}{\sum }}B_{i}^{\prime }\left(
x,y\right) e_{i}$, then $B_{i}^{\prime }=\underset{j=1}{\overset{n}{\sum }}%
b_{ij}\phi ^{t}B_{j}\phi $.

\section{The algebraic classification of the 3-dimensional Poisson algebras}
\label{sec2}

To obtain the classification of the $3$-dimensional Poisson algebras, we will use the classification commutative associative algebras of dimension 3. This classification can be obtained from the  classification of 3-dimensional Jordan algebras given in \cite{ha17} . We have summarized it in the following result.

\begin{theorem}
\label{CAA}Let $\left( \mathcal{P},\left\{ -,-\right\} \right) $ be a complex
commutative associative algebra of dimension three. Then $\mathcal{P}$ has a basis $\left\{e_i\right\}_1^3$ and it is
isomorphic to one of the following algebras:

\begin{tasks}[style=itemize](2)
\task $A_{1}:$ trivial algebra.

\task $A_{2}:e_{1}\cdot e_{1}=e_{2}.$

\task $A_{3}:e_{1}\cdot e_{2}=e_{3}.$

\task $A_{4}:e_{1}\cdot e_{1}=e_{2},e_{1}\cdot e_{2}=e_{3}.$

\task $A_{5}:e_{1}\cdot e_{1}=e_{1},e_{2}\cdot e_{2}=e_{2},e_{3}\cdot
e_{3}=e_{3}.$

\task $A_{6}:e_{1}\cdot e_{1}=e_{1},e_{2}\cdot e_{2}=e_{2},e_{2}\cdot
e_{3}=e_{3}.$

\task $A_{7}:e_{1}\cdot e_{1}=e_{1},e_{1}\cdot e_{2}=e_{2},e_{1}\cdot
e_{3}=e_{3}.$

\task $A_{8}:e_{1}\cdot e_{1}=e_{1},e_{1}\cdot e_{2}=e_{2},e_{1}\cdot
e_{3}=e_{3},e_{2}\cdot e_{2}=e_{3}.$

\task $A_{9}:e_{1}\cdot e_{1}=e_{1},e_{2}\cdot e_{2}=e_{2}.$

\task $A_{10}:e_{1}\cdot e_{1}=e_{1},e_{1}\cdot e_{2}=e_{2}.$

\task $A_{11}:e_{1}\cdot e_{1}=e_{1}.$

\task $A_{12}:e_{1}\cdot e_{1}=e_{1},e_{2}\cdot e_{2}=e_{3}.$
\end{tasks}
\end{theorem}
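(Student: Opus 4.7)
The plan is to obtain the list by selecting the associative representatives from the classification of $3$-dimensional Jordan algebras established in \cite{ha17}. The key observation is that every commutative associative algebra is automatically a Jordan algebra, since the Jordan identity $(x\cdot y)\cdot(x\cdot x)=x\cdot(y\cdot(x\cdot x))$ is a trivial consequence of associativity. Thus every isomorphism class of $3$-dimensional complex commutative associative algebras appears somewhere in the Jordan list, and the task reduces to filtering out the non-associative members of that list.

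Concretely, I would proceed as follows. First, write out the table from \cite{ha17}. Second, for each algebra in the table, test the associative identity $(e_i\cdot e_j)\cdot e_k = e_i\cdot(e_j\cdot e_k)$ on basis triples; commutativity reduces this to a small number of independent cases per candidate. Third, collect the candidates passing the test; these will be exactly the twelve algebras $A_1,\ldots,A_{12}$ in the statement. Finally, one verifies that no redundancy has been introduced: two commutative associative algebras are isomorphic as Jordan algebras precisely when they are isomorphic as associative algebras, because in both situations an isomorphism is just a linear bijection preserving the common multiplication, so no class of the Jordan list can split or merge when restricted to the associative locus.

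The main obstacle is essentially bookkeeping: one has to run the calculation carefully through every entry of the Jordan list and make sure that no isomorphism class is erroneously omitted or duplicated. As a self-contained alternative (avoiding any appeal to \cite{ha17}), one may argue directly via the Wedderburn principal theorem: any finite-dimensional commutative associative $\mathbb{C}$-algebra $\mathcal{P}$ decomposes as $\mathcal{P}=S\oplus N$ where $S\cong\mathbb{C}^k$ is semisimple and $N$ is the nilradical. A case analysis on $k\in\{0,1,2,3\}$, together with the Peirce decomposition of $N$ with respect to a complete set of orthogonal primitive idempotents of $S$, reproduces the same twelve algebras; the case $k=3$ yields $A_5$ at once, the case $k=0$ gives $A_1,\ldots,A_4$ according to the length of the descending chain $N\supset N^2\supset N^3$, and the intermediate cases $k=1,2$ yield the remaining eight algebras according to how the idempotents of $S$ act on $N$. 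This route is conceptually straightforward but considerably longer than the Jordan-list filtration.
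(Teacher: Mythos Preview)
Your proposal is correct and follows essentially the same approach as the paper: the paper does not give an independent proof of this theorem but simply states that the classification ``can be obtained from the classification of 3-dimensional Jordan algebras given in \cite{ha17}''. Your plan to filter the Jordan list for associativity, together with the observation that isomorphism of commutative associative algebras coincides with isomorphism as Jordan algebras, is exactly the intended derivation; your alternative Wedderburn route is a bonus the paper does not mention.
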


Also, the following result (see \cite{ikv19})  will help us with the classification of 3-dimensional Poisson algebras when the commutative associative multiplication $(\mathcal{P}, \cdot)$ is trivial.

\begin{theorem}
\label{3-dim Lie}Let $\left( \mathcal{P},\left\{ -,-\right\} \right) $ be a complex
Lie algebra of dimension three. Then $\mathcal{P}$ is isomorphic to one
of the following algebras: 

\begin{itemize}
\item $\mathcal{L}_{3,1}:$ trivial algebra.

\item $\mathcal{L}_{3,2}:\left\{ e_{1},e_{2}\right\} =e_{3}.$

\item $\mathcal{L}_{3,3}:\left\{ e_{1},e_{2}\right\} =e_{2},\left\{
e_{1},e_{3}\right\} =e_{2}+e_{3}.$

\item $\mathcal{L}_{3,4}^{\alpha}:\left\{ e_{1},e_{2}\right\}
=e_{2},\left\{ e_{1},e_{3}\right\} =\alpha e_{3}.$

\item $\mathcal{L}_{3,5}:\left\{ e_{1},e_{2}\right\} =e_{3},\left\{
e_{1},e_{3}\right\} =-2e_{1},\left\{ e_{2},e_{3}\right\} =2e_{2}.$
\end{itemize}
\end{theorem}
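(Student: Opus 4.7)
The plan is to stratify the $3$-dimensional Lie algebras by the dimension $d$ of their derived ideal $\left\{ \mathcal{P},\mathcal{P}\right\}$, which must be $0$, $1$, $2$, or $3$, and handle each case separately.

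For $d=0$ the algebra is abelian, giving $\mathcal{L}_{3,1}$. For $d=1$, fix a generator $v$ of $\left\{ \mathcal{P},\mathcal{P}\right\}$. If $v$ is central, then choosing $e_{1},e_{2}\in\mathcal{P}$ with $\left\{ e_{1},e_{2}\right\} =v$ and setting $e_{3}=v$ yields the Heisenberg algebra $\mathcal{L}_{3,2}$. If instead $v$ is not central, there exists $w$ with $\left\{ w,v\right\} =\lambda v$, $\lambda\neq 0$; after rescaling, completing to a basis, and using the Jacobi identity together with $\left\{ \mathcal{P},\mathcal{P}\right\}=\left\langle v\right\rangle$ to force the remaining brackets to vanish, I obtain $\mathcal{L}_{3,4}^{0}$.

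For $d=2$ set $J:=\left\{ \mathcal{P},\mathcal{P}\right\}$. The derived chain $\mathcal{P}\supset J\supset \left\{ J,J\right\} \supset\cdots$ terminates because $\dim\left\{ J,J\right\} \le 1$, so $\mathcal{P}$ is solvable; by Lie's theorem over $\mathbb{C}$, $J$ is nilpotent, and a $2$-dimensional nilpotent Lie algebra is abelian. For any $x\notin J$ the operator $T:=\textrm{ad}_{x}|_{J}$ is invertible, since its image contains $\left\{ \mathcal{P},\mathcal{P}\right\} =J$ (brackets within the abelian $J$ vanish). Replacing $x$ by $\alpha x+y$ with $\alpha\neq 0$ and $y\in J$ scales $T$ by $\alpha$, while a basis change of $J$ conjugates $T$; hence the isomorphism classes in this stratum correspond to invertible $2\times 2$ complex matrices modulo conjugation and nonzero scaling. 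The nontrivial Jordan block yields $\mathcal{L}_{3,3}$, while a diagonalizable matrix with eigenvalues $1,\alpha$ for $\alpha\neq 0$ yields $\mathcal{L}_{3,4}^{\alpha}$.

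Finally, for $d=3$ the algebra is perfect; standard structure theory over $\mathbb{C}$ then forces $\mathcal{P}$ to be semisimple (a nontrivial solvable radical would produce a semisimple quotient of dimension at most $2$, which does not exist), and since $\mathfrak{sl}_{2}(\mathbb{C})$ is the only complex simple Lie algebra of dimension at most $3$, we conclude $\mathcal{P}\cong\mathfrak{sl}_{2}(\mathbb{C})=\mathcal{L}_{3,5}$. I expect the main obstacle to be the $d=2$ case: both the proof that $J$ is abelian and the careful bookkeeping needed to avoid spurious parameters in $\mathcal{L}_{3,4}^{\alpha}$ (for example the identification $\alpha\sim\alpha^{-1}$ obtained by swapping the eigenvectors of $T$, and the coincidence of $\mathcal{L}_{3,4}^{1}$ with the diagonal case at equal eigenvalues) require the most care.
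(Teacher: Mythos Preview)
Your argument is correct, but there is nothing in the paper to compare it to: the paper does not prove this theorem at all. It simply quotes the classification of $3$-dimensional complex Lie algebras from the literature (the reference \cite{ikv19}) and uses it as input for the subsequent classification of Poisson structures. So the ``paper's own proof'' is a bare citation.

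Your stratification by $\dim\{\mathcal{P},\mathcal{P}\}$ is the standard route and each case is handled correctly. A couple of points you left implicit would need one line each in a polished write-up: in the $d=1$ central case, the independence of $e_{1},e_{2},v$ (which follows since $v$ lies in the radical of the bracket viewed as a skew form), and in the $d=1$ non-central case, the explicit basis change killing the remaining bracket coefficients. Your invocation of Lie's theorem in the $d=2$ case to get $J$ abelian is the cleanest way; the only alternative is to check directly that a $2$-dimensional ideal with $\{J,J\}\neq 0$ forces $\dim\{\mathcal{P},\mathcal{P}\}\neq 2$. The perfect case is fine as stated. Note also that the theorem as phrased in the paper is only a list of representatives, not a claim of pairwise non-isomorphism, so the redundancies you flag ($\mathcal{L}_{3,4}^{\alpha}\cong\mathcal{L}_{3,4}^{1/\alpha}$ for $\alpha\neq 0$) are consistent with it; the paper records that identification separately when it passes to Poisson algebras.
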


Applying the method developed in the previous section of this work, we have obtained the algebraic classification of the three-dimensional complex Poisson algebras.

\begin{theorem}
\label{3-dim Poisson}Let $\left( \mathcal{P},\cdot ,\left\{ -,-\right\} \right) $ be a complex
Poisson algebra of dimension three. Then $\mathcal{P}$ is isomorphic to one
of the following Poisson algebras: 

\begin{tasks}[style=itemize](2)
\task $\mathcal{P}_{3,1}:$ trivial algebra.

\task $\mathcal{P}_{3,2}:\left\{ e_{1},e_{2}\right\} =e_{3}.$

\task $\mathcal{P}_{3,3}:\left\{ e_{1},e_{2}\right\} =e_{2},\left\{
e_{1},e_{3}\right\} =e_{2}+e_{3}.$

\task $\mathcal{P}_{3,4}^{\alpha}:\left\{ e_{1},e_{2}\right\}
=e_{2},\left\{ e_{1},e_{3}\right\} =\alpha e_{3}.$

\task $\mathcal{P}_{3,5}:\left\{ e_{1},e_{2}\right\} =e_{3},\left\{
e_{1},e_{3}\right\} =-2e_{1},\left\{ e_{2},e_{3}\right\} =2e_{2}.$

\task $\mathcal{P}_{3,6}:e_{1}\cdot e_{1}=e_{2},e_{1}\cdot e_{2}=e_{3}.$

\task $\mathcal{P}_{3,7}:e_{1}\cdot e_{1}=e_{1},e_{2}\cdot
e_{2}=e_{2},e_{3}\cdot e_{3}=e_{3}.$

\task $\mathcal{P}_{3,8}:e_{1}\cdot e_{1}=e_{1},e_{2}\cdot
e_{2}=e_{2},e_{2}\cdot e_{3}=e_{3}.$

\task $\mathcal{P}_{3,9}:e_{1}\cdot e_{1}=e_{1},e_{1}\cdot
e_{2}=e_{2},e_{1}\cdot e_{3}=e_{3},e_{2}\cdot e_{2}=e_{3}.$

\task $\mathcal{P}_{3,10}:e_{1}\cdot e_{1}=e_{1},e_{2}\cdot e_{2}=e_{2}.$

\task $\mathcal{P}_{3,11}:e_{1}\cdot e_{1}=e_{1},e_{1}\cdot e_{2}=e_{2}.$

\task $\mathcal{P}_{3,12}:e_{1}\cdot e_{1}=e_{1},e_{2}\cdot e_{2}=e_{3}.$

\task $\mathcal{P}_{3,13}:e_{1}\cdot e_{1}=e_{2}.$

\task $\mathcal{P}_{3,14}:\left\{ 
\begin{tabular}{l}
$e_{1}\cdot e_{1}=e_{2},$ \\ 
$\left\{ e_{1},e_{3}\right\} =e_{3}.$%
\end{tabular}%
\right. $

\task $\mathcal{P}_{3,15}:\left\{ 
\begin{tabular}{l}
$e_{1}\cdot e_{1}=e_{2},$ \\ 
$\left\{ e_{1},e_{3}\right\} =e_{2}.$%
\end{tabular}%
\right. $

\task $\mathcal{P}_{3,16}^{\alpha }:\left\{ 
\begin{tabular}{l}
$e_{1}\cdot e_{2}=e_{3},$ \\ 
$\left\{ e_{1},e_{2}\right\} =\alpha e_{3}.$%
\end{tabular}%
\right. $

\task $\mathcal{P}_{3,17}:e_{1}\cdot e_{1}=e_{1},e_{1}\cdot
e_{2}=e_{2},e_{1}\cdot e_{3}=e_{3}.$

\task $\mathcal{P}_{3,18}:\left\{ 
\begin{tabular}{l}
$e_{1}\cdot e_{1}=e_{1},e_{1}\cdot e_{2}=e_{2},e_{1}\cdot e_{3}=e_{3},$ \\ 
$\left\{ e_{2},e_{3}\right\} =e_{2}.$%
\end{tabular}%
\right. $

\task $\mathcal{P}_{3,19}:e_{1}\cdot e_{1}=e_{1}.$

\task $\mathcal{P}_{3,20}:\left\{ 
\begin{tabular}{l}
$e_{1}\cdot e_{1}=e_{1},$ \\ 
$\left\{ e_{2},e_{3}\right\} =e_{2}.$%
\end{tabular}%
\right. $
\end{tasks}

Between these algebras there are precisely the following isomorphisms: 
\begin{itemize}
    \item $\mathcal{P}_{3,4}^{\alpha\neq 0}\cong \mathcal{P}_{3,4}^{\beta\neq 0}$ if and only if $\alpha=\beta$ or $\alpha=\frac{1}{\beta}$.
    
    \item $\mathcal{P}_{3,16}^{\alpha}\cong \mathcal{P}_{3,16}^{\beta}$ if and only if $\alpha^{2}=\beta^{2}$.
\end{itemize}

\end{theorem}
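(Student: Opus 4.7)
The plan is to apply the three-step method of Section~\ref{sec1} to each of the twelve commutative associative algebras $A_1,\dots,A_{12}$ listed in Theorem~\ref{CAA}, combined with the Lie algebra classification of Theorem~\ref{3-dim Lie} for the case when the associative multiplication is trivial. Concretely, for $A_1$ the Leibniz identity is vacuous, so $Z^{2}(A_{1},A_{1})$ is the full space of $3$-dimensional Lie brackets; the $\mathrm{Aut}(A_{1})=\GL_{3}(\mathbb{C})$-orbits are the isomorphism classes of Theorem~\ref{3-dim Lie}, yielding $\mathcal{P}_{3,1}$--$\mathcal{P}_{3,5}$. For each $A_{i}$ with $i\geq 2$, I would (a) compute $\mathrm{Aut}(A_{i})\leq\GL_{3}(\mathbb{C})$ as the stabilizer of the given structure constants; (b) compute $Z^{2}(A_{i},A_{i})$ by writing $\theta=(B_{1},B_{2},B_{3})$ in the $\Delta_{jk}$-basis and solving the linear system produced by the Jacobi identity together with the Leibniz/derivation identity $\theta(x\cdot y,z)=\theta(x,z)\cdot y+x\cdot\theta(y,z)$; and (c) use the action $B'_{i}=\sum_{j}b_{ij}\,\phi^{t}B_{j}\phi$ from the notation at the end of Section~\ref{sec1} to reduce a generic cocycle to a canonical form, picking one representative per orbit and writing down the corresponding Poisson algebra.

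Gathering the orbit representatives across the twelve cases produces the list $\mathcal{P}_{3,6},\dots,\mathcal{P}_{3,20}$; together with $\mathcal{P}_{3,1}$--$\mathcal{P}_{3,5}$ this exhausts all isomorphism classes. Pairwise non-isomorphism between algebras coming from different $A_{i}$ is automatic because their associative parts are non-isomorphic by Theorem~\ref{CAA} (and a Poisson isomorphism is in particular an isomorphism of the associative algebra). Non-isomorphism within a single $A_{i}$ reduces to the orbit separation carried out in step (c). Finally, the two parametric coincidences $\mathcal{P}_{3,4}^{\alpha}\cong\mathcal{P}_{3,4}^{1/\alpha}$ and $\mathcal{P}_{3,16}^{\alpha}\cong\mathcal{P}_{3,16}^{-\alpha}$ are verified by exhibiting explicit change-of-basis isomorphisms (the first swaps $e_{2}\leftrightarrow e_{3}$ together with a rescaling of $e_{1}$; the second rescales the basis so that the sign of $\alpha$ flips), and the converse is checked by computing a discrete invariant that separates the remaining values of the parameter.

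The main obstacle lies in step (c) for the associative algebras with the largest automorphism groups and the richest cocycle spaces; the cases $A_{7}$, $A_{10}$, $A_{11}$, $A_{12}$ are the most delicate, because the solution space of the Jacobi and Leibniz constraints leaves several free parameters and $\mathrm{Aut}(A_{i})$ acts on them non-linearly (via conjugation by $\phi$ composed with the matrix $(b_{ij})$). Careful normalization is required there to detect when a family collapses to a single isomorphism class versus when, as for $\mathcal{P}_{3,16}^{\alpha}$, a genuine one-parameter family persists, and to verify that the mild correction of the classification in \cite{gr06} indicated in the introduction is captured correctly.
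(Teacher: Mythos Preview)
Your plan follows exactly the paper's approach. One adjustment to your expectations: the paper shows $Z^{2}(A_{i},A_{i})=\{0\}$ for $i\in\{4,5,6,8,9,10,12\}$, so those seven algebras immediately contribute only the trivial bracket (giving $\mathcal{P}_{3,6},\dots,\mathcal{P}_{3,12}$), and the only cases requiring genuine orbit analysis are $A_{2},A_{3},A_{7},A_{11}$---not $A_{10}$ or $A_{12}$ as you anticipated. In particular the one-parameter family $\mathcal{P}_{3,16}^{\alpha}$ arises from $A_{3}$, where the automorphism swapping $e_{1}\leftrightarrow e_{2}$ (rather than a rescaling) is what sends $\alpha\mapsto-\alpha$.
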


\begin{proof}
By Theorem \ref{CAA}, we may assume $\left( \mathcal{P},\cdot \right) \in
\left\{ A_{1},A_{2},\ldots ,A_{12}\right\} $. If $\left( \mathcal{P},\cdot
\right) =A_{1}$, then $\left( \mathcal{P},\{-,-\} \right) $ is a Lie algebra and we get the algebras $\mathcal{P}_{3,1},\mathcal{P}%
_{3,2},\mathcal{P}_{3,3},\mathcal{P}_{3,4}^{\alpha}$
and $\mathcal{P}_{3,5}$. If $\left( \mathcal{P},\cdot \right) \in \left\{
A_{4},A_{5},A_{6},A_{8},A_{9},A_{10},A_{12}\right\} $, then $Z^{2}\left( 
\mathcal{P},\mathcal{P}\right) =\{0\}$. So we get the algebras $\mathcal{P}%
_{3,6},\ldots ,\mathcal{P}_{3,12}$. Assume now that $\left( \mathcal{P}%
,\cdot \right) \in \left\{ A_{2},A_{3},A_{7},A_{11}\right\} $. Then we have
the following cases:

\begin{enumerate}
\item $\left( \mathcal{P},\cdot \right) =A_{2}$. Let $\theta
=\left( B_{1},B_{2},B_{3}\right) $ be an arbitrary element of $Z^{2}\left( \mathcal{P},\mathcal{P}%
\right) $. Then $\theta=\left( 0,\alpha \Delta
_{1,3},\beta \Delta _{1,3}\right) $ for some $\alpha,\beta \in \mathbb{C}$. The automorphism group of $A_{2}$, an element $\phi \in
\textrm{Aut}\left( A_{2}\right) $ is given by an invertible matrix of the
following form:%
\[
\left( 
\begin{array}{ccc}
a_{11} & 0 & 0 \\ 
a_{21} & a_{11}^{2} & a_{23} \\ 
a_{31} & 0 & a_{33}%
\end{array}%
\right) .
\]%
Suppose that $\theta *\phi
 =\left( 0,\alpha ^{\prime }\Delta _{1,3},\beta ^{\prime
}\Delta _{1,3}\right) $. Then%
\begin{eqnarray*}
\alpha ^{\prime } &=&\frac{1}{a_{11}}\left( \alpha a_{33}-\beta
a_{23}\right) , \\
\beta ^{\prime } &=&\beta a_{11}.
\end{eqnarray*}
Let us distinguish two cases:
\begin{itemize}
\item $\left( \alpha ,\beta \right) =\left( 0,0\right) $. In this case we
get the algebra $\mathcal{P}_{3,13}$.

\item $\left( \alpha ,\beta \right) \neq \left( 0,0\right) $.  If $\beta
\neq 0$, we get the representative $\theta =\left( 0,0,\Delta
_{1,3}\right) $ \ and therefore we get the algebra $\mathcal{P}_{3,14}$. On
the other hand, if $\beta =0$, we obtain the representative $\theta =\left(
0,\Delta _{1,3},0\right) $\ and thus we get the algebra $\mathcal{P}%
_{3,15}$.
\end{itemize}

\item $\left( \mathcal{P},\cdot \right) =A_{3}$. If $\theta =\left(
B_{1},B_{2},B_{3}\right)$ is an arbitrary element of $Z^{2}\left( \mathcal{P},\mathcal{P}\right) $,
then $\theta=\left( 0,0,\alpha \Delta
_{1,2}\right) $ for some $\alpha \in \mathbb{C}$. Moreover, the automorphism group of $A_{2}$, $\textrm{Aut}\left(
A_{2}\right) $, consists of the isomorphisms $\phi$ given by a matrix of the following form:%
\[
\left( 
\begin{array}{ccc}
a_{11} & a_{12} & 0 \\ 
a_{21} & a_{22} & 0 \\ 
a_{31} & a_{32} & a_{11}a_{22}+a_{12}a_{21}%
\end{array}%
\right) :a_{12}=a_{21}=0\text{ or }a_{11}=a_{22}=0.
\]%
Assume that $\theta *\phi =\left( 0,0,\alpha ^{\prime }\Delta _{1,2}\right) $. Then%
\[
\alpha ^{\prime }=\frac{1}{a_{11}a_{22}+a_{12}a_{21}}\left( \alpha
a_{11}a_{22}-\alpha a_{12}a_{21}\right) .
\]%
From here we have $\alpha ^{\prime 2}=\alpha ^{2}$. So we get the
representatives $\theta ^{\alpha }=\left( 0,0,\alpha \Delta _{1,2}\right) 
$. Moreover, we have $\theta ^{\alpha }$and $\theta ^{\alpha ^{\prime }}$ in
the same orbit if and only if $\alpha ^{\prime 2}=\alpha ^{2}$. We denote
the algebras corresponding to the representatives $\theta ^{\alpha }$ by $%
\mathcal{P}_{3,16}^{\alpha }$.

\item $\left( \mathcal{P},\cdot\right) =A_{7}$. Choose an arbitrary element $\theta
=\left( B_{1},B_{2},B_{3}\right) \in Z^{2}\left( \mathcal{P},\mathcal{P}%
\right) $. Then $\theta=\left( 0,\alpha \Delta
_{2,3},\beta \Delta _{2,3}\right) $ for some $\alpha,\beta \in \mathbb{C}$. Furthermore, an automorphism $\phi \in
\textrm{Aut}\left( A_{7}\right) $, is given by an invertible matrix of
the form:%
\[
\left( 
\begin{array}{ccc}
1 & 0 & 0 \\ 
0 & a_{22} & a_{23} \\ 
0 & a_{32} & a_{33}%
\end{array}%
\right) .
\]%
Write $\theta *\phi =\left( 0,\alpha ^{\prime }\Delta _{2,3},\beta ^{\prime
}\Delta _{2,3}\right) $. Then
\begin{eqnarray*}
\alpha ^{\prime } &=&\alpha a_{33}-\beta a_{23}, \\
\beta ^{\prime } &=&\beta a_{22}-\alpha a_{32}.
\end{eqnarray*}%
If $\left( \alpha ,\beta \right) =\left( 0,0\right) $, we get the algebra $%
\mathcal{P}_{3,17}$. If $\left( \alpha ,\beta \right) \neq \left( 0,0\right) 
$, we get the representative $\theta =\left( 0,\Delta _{2,3},0\right) $
and hence we get the algebra $\mathcal{P}_{3,18}$.

\item $\left( \mathcal{P},\cdot \right) =A_{11}$. Consider an arbitrary element $\theta
=\left( B_{1},B_{2},B_{3}\right) \in Z^{2}\left( \mathcal{P},\mathcal{P}%
\right) $. Then $\theta=\left( 0,\alpha \Delta
_{2,3},\beta \Delta _{2,3}\right) $  for some $\alpha,\beta \in \mathbb{C}$. An automorphism $\phi \in\textrm{Aut}\left( A_{11}\right) $, is given by an invertible matrix of the form:%
\[
\left( 
\begin{array}{ccc}
1 & 0 & 0 \\ 
0 & a_{22} & a_{23} \\ 
0 & a_{32} & a_{33}%
\end{array}%
\right) .
\]%
Suppose that $\theta *\phi  =\left( 0,\alpha ^{\prime }\Delta _{2,3},\beta ^{\prime
}\Delta _{2,3}\right) $. Then%
\begin{eqnarray*}
\alpha ^{\prime } &=&\alpha a_{33}-\beta a_{23}, \\
\beta ^{\prime } &=&\beta a_{22}-\alpha a_{32}.
\end{eqnarray*}%
If $\left( \alpha ,\beta \right) =\left( 0,0\right) $, we get the algebra $%
\mathcal{P}_{3,19}$. If $\left( \alpha ,\beta \right) \neq \left( 0,0\right) 
$, we get the representative $\theta =\left( 0,\Delta _{2,3},0\right) $
and hence we get the algebra $\mathcal{P}_{3,20}$.
\end{enumerate}
These cases complete the classification of Poisson algebras of dimension three.
\end{proof}

\bigskip

Additionally, we have compared the classification we have obtained to the previously known classification, from the paper by Goze and Remm \cite{gr06}, and we have found the following isomorphisms, maintaining the corresponding notation of each work.

\begin{table}[H]
\centering
\begin{tabular}{|c|c|c|c|c|c|c|c|c|c|}
\hline
GR & $\mathcal{P}_{3,1}\left( \gamma \right) $ & $\mathcal{P}_{3,2}$ & $%
\mathcal{P}_{3,3}\left( \alpha \right) $ & $\mathcal{P}_{3,4}$ & $\mathcal{P}%
_{3,5}$ & $\mathcal{P}_{3,6}$ & $\mathcal{P}_{3,7}\left( \alpha \right) $ & $%
\mathcal{P}_{3,8}$ & $\mathcal{P}_{3,9}$ \\ \hline
AFM & $\mathcal{P}_{3,6}^{\alpha }$ & $\mathcal{P}_{3,15}$ & $\mathcal{P}%
_{3,20}$ & $\mathcal{P}_{3,14}$ & $\mathcal{P}_{3,2}$ & $\mathcal{P}_{3,18}$
& $\mathcal{P}_{3,4}^{\alpha}$ & $\mathcal{P}_{3,3}$ & $\mathcal{P}%
_{3,5}$ \\ \hline
\end{tabular}    
\caption{Comparison between Goze's and Remms classification and our classification.}
\end{table}

From the table, there is a family on that paper which is isomorphic to a single algebra from our classification.
\medskip

Finally, we recall the notion of Malcev Poisson algebra, which is a generalization of the notion of Poisson algebra.

\begin{definition}
A \emph{Malcev Poisson algebra}  is a  vector space $\mathcal{P}$ equipped with two bilinear operations:

\begin{enumerate}
    \item An commutative associative multiplication, denoted by $-\cdot- :\mathcal{P}\times   \mathcal{P}\longrightarrow \mathcal{P}$;

    \item A Malcev algebra multiplication, denoted by $\left\{-, -\right\} :\mathcal{P}\times   \mathcal{P}\longrightarrow \mathcal{P}$.
\end{enumerate}
These two operations are required to satisfy  the following Leibniz identity:
\begin{equation*}
\left\{x\cdot y, z\right\}=\left\{x,z\right\}\cdot y + x\cdot\left\{y,z\right\},
\end{equation*}
for any $x,y,z\in \mathcal{P}$.
\end{definition}

Since any Malcev algebra up to dimension three is a Lie algebra, we have the following result.

\begin{theorem}
All Malcev Poisson algebras of dimension three are Poisson algebras. Therefore, the classification of complex Malcev Poisson algebras of dimension three is given in Theorem \ref{3-dim Poisson}.
\end{theorem}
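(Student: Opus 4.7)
The plan is to reduce the statement to the already-stated fact (quoted in the paragraph preceding the theorem) that every Malcev algebra of dimension at most three is a Lie algebra. The definitions of Malcev Poisson algebra and Poisson algebra differ only in replacing the Lie bracket with a Malcev bracket, while the compatibility condition (the Leibniz identity linking $-\cdot-$ and $\{-,-\}$) and the commutative associative multiplication are identical in both definitions.

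First, I would take an arbitrary complex Malcev Poisson algebra $(\mathcal{P},\cdot,\{-,-\})$ of dimension three. Since every Malcev algebra of dimension $\leq 3$ is a Lie algebra (the Malcev identity reduces to the Jacobi identity in low dimensions, because the Malcev identity, viewed as a tensorial relation, is automatically satisfied by any anticommutative bracket in $\leq 3$ dimensions whenever it already satisfies Jacobi), the bracket $\{-,-\}$ is a Lie bracket on $\mathcal{P}$. Therefore $(\mathcal{P},\cdot,\{-,-\})$ satisfies all axioms of a Poisson algebra, so $\mathcal{P}\in\mathbf{P}_3$.

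Conversely, every Poisson algebra is trivially a Malcev Poisson algebra, since any Lie bracket is a Malcev bracket. Hence the classes of three-dimensional Malcev Poisson and Poisson algebras coincide, and the classification follows immediately from Theorem \ref{3-dim Poisson}.

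The only possibly nontrivial ingredient is justifying that a three-dimensional complex Malcev algebra is a Lie algebra; this can either be cited from the standard references on Malcev algebras or verified directly by writing out the Malcev identity $\{J(x,y,z),x\} = J(x,y,\{x,z\})$ (with $J$ the Jacobiator) on a basis of a $3$-dimensional anticommutative algebra and observing that, together with anticommutativity, the identity forces $J \equiv 0$. This is the only step requiring attention; everything else is immediate from the definitions.
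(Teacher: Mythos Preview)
Your proposal is correct and follows exactly the paper's approach: the paper offers no explicit proof, only the one-line remark preceding the theorem that every Malcev algebra of dimension at most three is a Lie algebra, from which the result is immediate. One small wording issue: the parenthetical in your second paragraph has the implication backwards (Jacobi $\Rightarrow$ Malcev is trivial in any dimension; what you need, and what you correctly state in your final paragraph, is that in dimension $\leq 3$ the Malcev identity forces $J\equiv 0$).
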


\section{The classification of certain types of Poisson algebras}
\label{sec3}

In this section, we consider the classification problem for an arbitrary dimension in two particular cases. Namely, we study the classification of all $n$-dimensional ($n>3$) complex Poisson algebra $\left( \mathcal{P}%
,\cdot ,\left\{ -,-\right\} \right) $ such that $\left( \mathcal{P},\cdot
\right) $ is a null-filiform algebra or a filiform algebra. Recall the definition of a null-filiform algebra.

\begin{definition}

An $n$-dimensional algebra $\mathcal{P}$ is a null-filiform if $\mathrm{dim}\,\mathcal{P}^i = n-i+1 $ where $2\leq i \leq n+1$ and 
$$\mathcal{P}^{1}= \mathcal{P}, \mathcal{P}^{k+1}= \mathcal{P}^{k} \cdot \mathcal{P} \textrm{ for } n\in {\mathbb N}.$$

\end{definition}

By \cite{D99, K21, M14}, we have the classification of the associative null-filiform  algebras.

\begin{theorem}
\label{nul-fili}An arbitrary $n$-dimensional null-filiform associative
algebra is isomorphic to the algebra: 
\begin{equation*}
\mu _{0}^{n}:\quad e_{i}\cdot e_{j}=e_{i+j},\quad 2\leq i+j\leq n,
\end{equation*}%
where $\{e_{1},e_{2},\dots ,e_{n}\}$ is a basis of the algebra $\mu _{0}^{n}$%
.
\end{theorem}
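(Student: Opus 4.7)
The plan is to show that every $n$-dimensional null-filiform associative algebra is cyclic---generated by a single element whose successive powers form a basis compatible with the descending filtration $\mathcal{P}\supsetneq \mathcal{P}^2\supsetneq\cdots\supsetneq \mathcal{P}^n\supsetneq 0$.

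First, the defining dimension condition gives $\dim(\mathcal{P}/\mathcal{P}^2)=1$, so there exists $e_1\in\mathcal{P}\setminus\mathcal{P}^2$ with $\mathcal{P}=\mathbb{F}e_1+\mathcal{P}^2$. I would then prove by induction on $k$ the filtration identity
$$\mathcal{P}^k=\mathbb{F}e_1^k+\mathcal{P}^{k+1},\qquad 1\le k\le n.$$
The inductive step uses $\mathcal{P}^{k+1}=\mathcal{P}^k\cdot\mathcal{P}$ and the containments $\mathcal{P}^{k+1}\cdot\mathcal{P}\subseteq\mathcal{P}^{k+2}$ and $\mathcal{P}^k\cdot\mathcal{P}^2\subseteq\mathcal{P}^{k+2}$, so after expanding both factors using the inductive hypothesis and the base case, only the term $\mathbb{F}e_1^k\cdot\mathbb{F}e_1=\mathbb{F}e_1^{k+1}$ survives modulo $\mathcal{P}^{k+2}$. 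Combined with the strict dimension drop $\dim(\mathcal{P}^k/\mathcal{P}^{k+1})=1$ coming from the null-filiform hypothesis, this forces $e_1^k\notin\mathcal{P}^{k+1}$; in particular $e_1^k\neq 0$ for every $k\le n$.

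Setting $e_k:=e_1^k$, the collection $\{e_1,\ldots,e_n\}$ is then a basis of $\mathcal{P}$, since the class of $e_k$ spans the one-dimensional quotient $\mathcal{P}^k/\mathcal{P}^{k+1}$ and the filtration telescopes. The multiplication table is then read off directly by associativity: $e_i\cdot e_j=e_1^i\cdot e_1^j=e_1^{i+j}$, which equals $e_{i+j}$ when $i+j\le n$ and vanishes when $i+j>n$ because $\mathcal{P}^{n+1}=0$. The one step that requires genuine care is the inductive proof of the filtration identity, as one must track that every summand appearing in the expansion of $\mathcal{P}^k\cdot\mathcal{P}$ other than $\mathbb{F}e_1^{k+1}$ lands in $\mathcal{P}^{k+2}$; once that is in place, linear independence and the multiplication table are immediate.
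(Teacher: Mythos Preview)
Your argument is correct and complete. The inductive identity $\mathcal{P}^k=\mathbb{F}e_1^k+\mathcal{P}^{k+1}$ is the right organizing principle, and your justification of the inductive step is sound: associativity gives $\mathcal{P}^i\cdot\mathcal{P}^j\subseteq\mathcal{P}^{i+j}$, so every cross term in $(\mathbb{F}e_1^k+\mathcal{P}^{k+1})\cdot(\mathbb{F}e_1+\mathcal{P}^2)$ other than $\mathbb{F}e_1^{k+1}$ falls into $\mathcal{P}^{k+2}$, and the strict dimension drop then forces $e_1^k\notin\mathcal{P}^{k+1}$. The basis and multiplication table follow exactly as you describe.

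As for comparison with the paper: the paper does not prove this theorem at all. It is quoted from the literature (the references to Dekimpe--Ongenae, Karimjanov--Kaygorodov--Ladra, and Masutova--Omirov) and used as input for the subsequent classification of Poisson structures. So you have supplied a self-contained proof where the paper simply cites one; your approach is the standard one and is essentially what appears in those references.
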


\begin{lemma}
\label{Z^2 for nul-filiform} Let $\left( \mathcal{P},\cdot \right) $\ be an $%
n $-dimensional null-filiform commutative associative algebra. Then $%
Z^{2}\left( \mathcal{P},\mathcal{P}\right) =\left\{ 0\right\} $.
\end{lemma}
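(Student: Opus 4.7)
The plan is to exploit the extreme rigidity of the null-filiform structure $\mu_0^n$, in which every basis vector is a power of the generator, namely $e_i = e_1^{\,i}$. The Leibniz identity will then allow us to push the value of $\theta$ across the multiplication, and combined with skew-symmetry this will force $\theta$ to vanish on every basis pair. Notice that the Jacobi identity that is part of the definition of $Z^2(\mathcal{P}, \mathcal{P})$ will not be needed.

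Step 1 is to show that $\theta(e_i, e_1) = 0$ for every $i$ by induction on $i$. The base case $i = 1$ is immediate from skew-symmetry. For the inductive step, I would apply the Leibniz identity with $x = e_{i-1}$, $y = e_1$, $z = e_1$:
$$\theta(e_i, e_1) \;=\; \theta(e_{i-1} \cdot e_1, e_1) \;=\; \theta(e_{i-1}, e_1) \cdot e_1 \;+\; e_{i-1} \cdot \theta(e_1, e_1).$$
The second summand vanishes because $\theta(e_1,e_1)=0$ by skew-symmetry, and the first vanishes by the induction hypothesis. Hence $\theta(e_i, e_1) = 0$ for $i = 1, \dots, n$, and skew-symmetry then gives $\theta(e_1, e_j) = 0$ for every $j$ as well.

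Step 2 is to deduce that $\theta(e_i, e_j) = 0$ for all pairs, again by induction on $i$. Using $e_i = e_{i-1} \cdot e_1$ and the Leibniz identity,
$$\theta(e_i, e_j) \;=\; \theta(e_{i-1} \cdot e_1, e_j) \;=\; \theta(e_{i-1}, e_j) \cdot e_1 \;+\; e_{i-1} \cdot \theta(e_1, e_j) \;=\; \theta(e_{i-1}, e_j) \cdot e_1,$$
where the last equality uses $\theta(e_1, e_j) = 0$ from Step 1. Anchoring the induction at $i = 1$ with $\theta(e_1, e_j) = 0$ gives $\theta(e_i, e_j) = 0$ for all $i, j$. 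Bilinearity then yields $\theta = 0$, hence $Z^2(\mathcal{P}, \mathcal{P}) = \{0\}$.

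I do not expect any real obstacle: the proof is a direct unfolding of the Leibniz identity, using only the skew-symmetry of $\theta$ and the structural fact that $e_i = e_{i-1} \cdot e_1$ is valid throughout the basis of $\mu_0^n$ for $2 \leq i \leq n$. The only mildly delicate point to check is that the recursion holds up to the top of the basis, which it does by the definition of $\mu_0^n$.
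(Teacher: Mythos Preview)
Your proof is correct and follows essentially the same approach as the paper: both arguments exploit the Leibniz identity together with $e_i = e_{i-1}\cdot e_1$ to reduce $\theta(e_i,e_j)$ to values on lower indices, using skew-symmetry for the base case. The only cosmetic difference is that the paper runs a single induction on $i+j$, whereas you split it into two inductions on $i$ (first with $j=1$, then for general $j$); the content is the same.
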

\begin{proof}
By Theorem \ref{nul-fili}, we may assume $\left( \mathcal{P},\cdot \right)
=\mu _{0}^{n}$. Let $\theta $ be an arbitrary element of $Z^{2}\left( 
\mathcal{P},\mathcal{P}\right) $. Then, we will prove that $\theta \left( e_{i},e_{j}\right) =0$ for $%
1\leq i,j\leq n$\ by induction on $i+j$. If $i+j=2$ we have $\theta \left(
e_{i},e_{j}\right) =0$. Now, assume that $\theta \left( e_{i},e_{j}\right) =0$
for $i+j<n$. Then, for $i+j=n$, we have $i>1$ or $j>1$. Without loss of
generality, we may assume $i>1$. Hence, since $i-1+j<n$ and $1+j<n$, $\theta \left( e_{i},e_{j}\right)
=\theta \left( e_{i-1}\cdot e_{1},e_{j}\right) =\theta \left(
e_{i-1},e_{j}\right) \cdot e_{1}+e_{i-1}\cdot \theta \left(
e_{1},e_{j}\right) =0$.
\end{proof}
\begin{theorem}
\label{Poisson with null-filiform}Let $\left( \mathcal{P},\cdot ,\left\{
-,-\right\} \right) $\ be an $n$-dimensional Poisson algebra such that $%
\left( \mathcal{P},\cdot \right) $ is null-filiform. Then $\left( 
\mathcal{P},\cdot ,\left\{ -,-\right\} \right) $ is isomorphic to the
following algebra:%
\begin{equation*}
\mathcal{P}_{0}^{n}:\quad e_{i}\cdot e_{j}=e_{i+j},\quad 2\leq i+j\leq n,
\end{equation*}%
where $\{e_{1},e_{2},\dots ,e_{n}\}$ is a basis of the algebra $\mathcal{P}%
_{0}^{n}$.
\end{theorem}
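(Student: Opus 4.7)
The plan is to deduce this theorem directly from the preceding results of the section. By Theorem \ref{nul-fili}, we may assume without loss of generality that the commutative associative algebra $(\mathcal{P},\cdot)$ coincides with $\mu_0^n$, fixing the basis $\{e_1,\dots,e_n\}$ for which $e_i\cdot e_j = e_{i+j}$ whenever $2\le i+j\le n$ and zero otherwise.

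Next I would invoke the classification method established in Section \ref{sec1}: every Poisson bracket compatible with $(\mathcal{P},\cdot)$ arises from some $\theta\in Z^2(\mathcal{P},\mathcal{P})$ via $\{x,y\}_\theta=\theta(x,y)$, and two such brackets yield isomorphic Poisson algebras if and only if the corresponding $\theta$'s lie in the same $\mathrm{Aut}(\mathcal{P})$-orbit. So the classification reduces to describing the orbits on $Z^2(\mathcal{P},\mathcal{P})$.

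The key step is then immediate from Lemma \ref{Z^2 for nul-filiform}, which asserts $Z^{2}(\mathcal{P},\mathcal{P})=\{0\}$. Since there is only one element in $Z^{2}(\mathcal{P},\mathcal{P})$, namely the zero map, there is a single orbit whose representative is $\theta=0$. The associated Poisson algebra has trivial bracket $\{-,-\}\equiv 0$, so its structure is given entirely by the commutative associative multiplication of $\mu_0^n$; this is exactly $\mathcal{P}_0^n$.

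There is essentially no obstacle here, because all the technical content has already been carried out: the hard step was establishing $Z^{2}(\mathcal{P},\mathcal{P})=\{0\}$ in Lemma \ref{Z^2 for nul-filiform} by induction on $i+j$ using the Leibniz identity and the fact that $e_1$ generates the whole algebra under $\cdot$. Once that vanishing is in hand, the theorem is a one-line consequence of the classification scheme in Section \ref{sec1}.
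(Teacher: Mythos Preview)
Your proposal is correct and matches the paper's approach exactly: the paper does not even write out a separate proof for this theorem, treating it as an immediate consequence of Lemma~\ref{Z^2 for nul-filiform} together with the classification method of Section~\ref{sec1} and Theorem~\ref{nul-fili}. Your write-up simply makes explicit what the paper leaves implicit.
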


\bigskip

Now, recall the definition of a filiform algebra.

\begin{definition}

An $n$-dimensional algebra $\mathcal{P}$ is a filiform if $\mathrm{dim}\,\mathcal{P}^i = n-i $ where $2\leq i \leq n$.

\end{definition}

Thanks to \cite{K20, K21}, we have the classification of the commutative associative filiform  algebras .

\begin{theorem}
\label{filiform}Every $n$-dimensional ($n>3$) complex filiform commutative
associative algebra is isomorphic to one of the next non-isomorphic
algebras with basis $\{e_{1},e_{2},\dots ,e_{n}\}$: 
\begin{equation*}
\begin{array}{llll}
\mu _{1,1}^{n}=\mu _{0}^{n-1}\oplus \mathbb{C}e_{n} & : & e_{i}\cdot
e_{j}=e_{i+j}, &  \\ 
\mu _{1,2}^{n} & : & e_{i}\cdot e_{j}=e_{i+j}, & e_{n}\cdot e_{n}=e_{n-1}.%
\end{array}%
\end{equation*}%
where $2\leq i+j\leq n-1$.
\end{theorem}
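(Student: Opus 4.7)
The plan is to construct an adapted basis step by step, normalising at each stage. Since $\mathcal{P}$ is filiform, $\dim\mathcal{P}/\mathcal{P}^2=2$ and each quotient $\mathcal{P}^k/\mathcal{P}^{k+1}$ (for $2\le k\le n-1$) is one-dimensional. I first choose a basis $\{e_1,e_2,\dots,e_{n-1},e_n\}$ adapted to the descending series: $\{e_k,\dots,e_{n-1}\}$ is a basis of $\mathcal{P}^k$ for $2\le k\le n-1$, and $\{e_1,e_n\}$ projects to a basis of $\mathcal{P}/\mathcal{P}^2$. Because $\mathcal{P}^2\neq\mathcal{P}^3$, the quadratic map $\bar x\mapsto[x\cdot x]\in\mathcal{P}^2/\mathcal{P}^3$ is a nonzero binary quadratic form, so diagonalising it over $\mathbb{C}$ allows me to re-choose the two generators to achieve $[e_1\cdot e_1]\neq 0$ and $[e_1\cdot e_n]=0$ in $\mathcal{P}^2/\mathcal{P}^3$. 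I then redefine $e_2:=e_1\cdot e_1$ and, inductively, $e_{k+1}:=e_1\cdot e_k=e_1^{k+1}$ for $2\le k\le n-2$.

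A short induction confirms that each such $e_k$ is nonzero modulo $\mathcal{P}^{k+1}$: in $\mathcal{P}^{k+1}$ modulo $\mathcal{P}^{k+2}$ the only products that can contribute are $e_k\cdot e_1=e_{k+1}$ and $e_k\cdot e_n$, but the second equals $e_1^{k-1}\cdot(e_1\cdot e_n)\in\mathcal{P}^{k-1}\cdot\mathcal{P}^3\subseteq\mathcal{P}^{k+2}$, so $e_{k+1}$ must span the one-dimensional $\mathcal{P}^{k+1}/\mathcal{P}^{k+2}$ and therefore is nonzero. Consequently $\{e_2,\dots,e_{n-1}\}$ is genuinely a basis of $\mathcal{P}^2$, and the identities $e_i\cdot e_j=e_{i+j}$ hold for every $2\le i+j\le n-1$.

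Next I eliminate $e_1\cdot e_n$ entirely. Writing $e_1\cdot e_n=\sum_{k=3}^{n-1}\alpha_k e_k$, the substitution $\tilde e_n:=e_n-\sum_{j=2}^{n-2}\alpha_{j+1}e_j$ preserves $e_1$ and the basis of $\mathcal{P}^2$ and, using $e_1\cdot e_k=e_{k+1}$ for $k\le n-2$ together with $e_1\cdot e_{n-1}=0$ (because $\mathcal{P}^n=0$), yields $e_1\cdot\tilde e_n=0$. Associativity then propagates this to $e_i\cdot\tilde e_n=e_1^{i-1}\cdot(e_1\cdot\tilde e_n)=0$ for every $1\le i\le n-1$. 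It remains to analyse $\tilde e_n\cdot\tilde e_n\in\mathcal{P}^2$. Writing $\tilde e_n\cdot\tilde e_n=\sum_{k=2}^{n-1}\beta_k e_k$ and applying associativity to $(\tilde e_n\cdot\tilde e_n)\cdot e_1=\tilde e_n\cdot(\tilde e_n\cdot e_1)=0$ forces $\beta_2 e_3+\cdots+\beta_{n-2}e_{n-1}=0$, whence $\tilde e_n\cdot\tilde e_n=\beta_{n-1}e_{n-1}$. If $\beta_{n-1}=0$ the algebra is exactly $\mu_{1,1}^n$; otherwise the rescaling $\tilde e_n\mapsto\beta_{n-1}^{-1/2}\tilde e_n$, available over $\mathbb{C}$, delivers $\mu_{1,2}^n$.

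Finally, to verify that $\mu_{1,1}^n$ and $\mu_{1,2}^n$ are not isomorphic I compute their annihilators: in $\mu_{1,1}^n$ both $e_{n-1}$ and $e_n$ annihilate everything, giving a two-dimensional annihilator, whereas in $\mu_{1,2}^n$ only $e_{n-1}$ does, since $e_n\cdot e_n=e_{n-1}\neq 0$. The dimension of the annihilator is an isomorphism invariant, so the two algebras are genuinely distinct. The main obstacle is the induction in the second paragraph: the normalisation $e_1\cdot e_n\in\mathcal{P}^3$ has to be used in precisely the right way to guarantee that at each level $e_1^{k+1}$ really survives as a nonzero class in $\mathcal{P}^{k+1}/\mathcal{P}^{k+2}$ rather than being absorbed by products involving $e_n$; everything that follows (killing $e_1\cdot e_n$, reducing $\tilde e_n^{\,2}$ to a scalar multiple of $e_{n-1}$, and distinguishing the two algebras via annihilator dimension) is then essentially routine.
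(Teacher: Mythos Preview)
Your proof is correct. The paper itself does not prove this theorem; it simply imports the classification from the references \cite{K20,K21} (Karimjanov--Ladra and Karimjanov--Kaygorodov--Ladra). You have supplied a self-contained argument in its place.

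A few comments on the comparison. Your approach is the natural direct one: normalise the two generators of $\mathcal{P}/\mathcal{P}^2$ via the induced quadratic form on $\mathcal{P}^2/\mathcal{P}^3$, build the ``spine'' $e_k=e_1^{\,k}$, push $e_1\cdot e_n$ to zero by a triangular change of $e_n$, and then use associativity to force $e_n^{\,2}\in\langle e_{n-1}\rangle$. Each step is justified, including the one you flag as delicate: the induction showing $[e_1^{\,k+1}]\neq 0$ in $\mathcal{P}^{k+1}/\mathcal{P}^{k+2}$ really does need the preliminary normalisation $e_1\cdot e_n\in\mathcal{P}^3$, and you use it correctly via $e_k\cdot e_n=e_1^{\,k-1}\cdot(e_1\cdot e_n)\in\mathcal{P}^{k+2}$. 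The non-isomorphism via $\dim\mathrm{Ann}(\mu_{1,1}^n)=2\neq 1=\dim\mathrm{Ann}(\mu_{1,2}^n)$ is also fine. One tiny point you leave implicit but which holds: the quadratic map $\bar x\mapsto[x\cdot x]$ is indeed nonzero because, by polarisation in characteristic $\neq 2$, squares span $\mathcal{P}^2$ modulo $\mathcal{P}^3$, and $\mathcal{P}^2\neq\mathcal{P}^3$ by the filiform hypothesis.
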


\begin{lemma}
\label{Z2 for filiform}Let $\left( \mathcal{P},\cdot \right) $ be an $n$%
-dimensional ($n>3$) complex filiform commutative associative algebra.

\begin{enumerate}
\item If $\left( \mathcal{P},\cdot \right) =\mu _{1,1}^{n}$, then $%
Z^{2}\left( \mathcal{P},\mathcal{P}\right) =\left\{ \alpha \Delta
_{1,n}\left( -,-\right) e_{n-1}+\beta \Delta _{1,n}\left( -,-\right)
e_{n}:\alpha ,\beta \in 
\mathbb{C}
\right\} $.

\item If $\left( \mathcal{P},\cdot \right) =\mu _{1,2}^{n}$, then $%
Z^{2}\left( \mathcal{P},\mathcal{P}\right) =\left\{ \alpha \Delta
_{1,n}\left( -,-\right) e_{n-1}:\alpha \in 
\mathbb{C}
\right\} $.
\end{enumerate}
\end{lemma}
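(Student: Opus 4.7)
The plan is to establish both containments in each case. The inclusion $\supseteq$ is a direct verification: the candidate forms vanish off the pair $(e_1,e_n)$ with values in $\langle e_{n-1},e_n\rangle$ (or $\langle e_{n-1}\rangle$ in case~(2)), and since these elements are annihilated by $e_i\cdot(\,\cdot\,)$ for every $i$ (using that $e_1\cdot e_{n-1}=0$ because $1+(n-1)>n-1$, and that in case~(2) the only nonzero product involving $e_n$ is $e_n\cdot e_n=e_{n-1}$, which still lies in the same span), both the Leibniz and Jacobi identities reduce to a short cancellation that can be checked directly.

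For the nontrivial inclusion $\subseteq$, let $\theta\in Z^{2}(\mathcal{P},\mathcal{P})$. My first step is to reproduce the induction of Lemma~\ref{Z^2 for nul-filiform} on the null-filiform subalgebra $\langle e_1,\ldots,e_{n-1}\rangle$. Since $e_i=e_{i-1}\cdot e_1$ is valid whenever $2\le i\le n-1$, the Leibniz identity gives
\[
\theta(e_i,e_j)=\theta(e_{i-1},e_j)\cdot e_1+e_{i-1}\cdot\theta(e_1,e_j),
\]
and induction on $i+j$ (base $\theta(e_1,e_1)=0$ from skew symmetry) yields $\theta(e_i,e_j)=0$ for all $1\le i,j\le n-1$. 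The induction stalls precisely at index $n$ because $e_n$ is not a product of smaller basis vectors in $\mu_{1,1}^n$ and is only a self-product in $\mu_{1,2}^n$.

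Next I determine $\theta(e_1,e_n)$ by exploiting $e_1\cdot e_n=0$: Leibniz with $z=e_1$ yields $0=\theta(e_1,e_1)\cdot e_n+e_1\cdot\theta(e_n,e_1)=-e_1\cdot\theta(e_1,e_n)$. Writing $\theta(e_1,e_n)=\sum_{k=1}^n c_k e_k$ and using $e_1\cdot e_k=e_{k+1}$ for $1\le k\le n-2$ together with $e_1\cdot e_{n-1}=e_1\cdot e_n=0$, this forces $c_1=\cdots=c_{n-2}=0$, so $\theta(e_1,e_n)=\alpha e_{n-1}+\beta e_n$. To propagate, the recursion
\[
\theta(e_c,e_n)=\theta(e_{c-1},e_n)\cdot e_1+e_{c-1}\cdot\theta(e_1,e_n)
\]
together with $e_{c-1}\cdot e_{n-1}=e_{c-1}\cdot e_n=0$ for every $c\in\{2,\ldots,n-1\}$ gives $\theta(e_c,e_n)=0$ by induction on $c$, settling case~(1).

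Case~(2) differs only through the extra relation $e_n\cdot e_n=e_{n-1}$, which supplies one additional Leibniz constraint: with $x=y=e_n$ and $z=e_1$,
\[
\theta(e_{n-1},e_1)=2\,e_n\cdot\theta(e_n,e_1).
\]
The left-hand side vanishes by the null-filiform step, while the right-hand side equals $-2\beta\, e_n\cdot e_n=-2\beta e_{n-1}$, forcing $\beta=0$. The main difficulty is the bookkeeping of the many Leibniz relations attached to the zero products of the algebra; the key observation is that once the null-filiform induction clears the block $i,j\le n-1$, the only remaining freedom is concentrated on the pair $(e_1,e_n)$, and it is cornered between the relation $e_1\cdot e_n=0$ (which kills the lower-index components of $\theta(e_1,e_n)$) and, in case~(2) only, the relation $e_n\cdot e_n=e_{n-1}$ (which additionally kills the $e_n$ component).
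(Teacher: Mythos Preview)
Your proposal is correct and follows essentially the same route as the paper: both arguments first invoke the null-filiform induction on $\langle e_1,\ldots,e_{n-1}\rangle$ to kill $\theta(e_i,e_j)$ for $i,j\le n-1$, then use Leibniz applied to products involving $e_n$ to force $\theta(e_1,e_n)\in\langle e_{n-1},e_n\rangle$ and $\theta(e_c,e_n)=0$ for $2\le c\le n-1$, with the extra relation $e_n\cdot e_n=e_{n-1}$ in $\mu_{1,2}^n$ eliminating the $e_n$-component. Your write-up is slightly more explicit about the $\supseteq$ direction and about the specific Leibniz instance (with $x=y=e_n$, $z=e_1$) that forces $\beta=0$, but there is no substantive difference in method.
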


\begin{proof}
$\left( 1\right) $ Let $\theta \in Z^{2}\left( \mathcal{P},\mathcal{P}%
\right) $. As in Lemma \ref{nul-fili}, we have $\theta \left(
e_{i},e_{j}\right) =0$ for $1\leq i,j\leq n-1$. Since $\theta \left( x\cdot
y,z\right) =x\cdot \theta \left( y,z\right) +y\cdot \theta \left( x,z\right) 
$, we have $x\cdot \theta \left( e_{n},z\right) =0$ for all $x,z\in \mu
_{1,1}^{n}$. From here, we conclude that $\theta \left( e_{i},e_{n}\right)
\in \left\langle e_{n-1},e_{n}\right\rangle $ for $1\leq i\leq n$. Further
if $i>1$, then we have%
\begin{equation*}
\theta \left( e_{i},e_{n}\right) =e_{1}\cdot \theta \left(
e_{i-1},e_{n}\right) +e_{i-1}\cdot \theta \left( e_{1},e_{n}\right) =0.
\end{equation*}%
Hence $\theta \left( x,y\right) =\alpha \Delta _{1,n}\left( x,y\right)
e_{n-1}+\beta \Delta _{1,n}\left( x,y\right) e_{n}$ for some $\alpha ,\beta
\in 
\mathbb{C}
$.

$\left( 2\right) $ The proof is similar to the above case.
\end{proof}

\medskip

The group of automorphisms of the algebras $\mu _{1,1}^{n}$ and $\mu _{1,2}^{n}$ is given by \cite{K21}.

\begin{lemma}
Let $\phi _{1,s}^{n}\in \textrm{Aut}(\mu _{1,s}^{n})$. Then $\phi _{1,1}^{n}, \phi _{1,2}^{n}$ and are given respectively by the matrices 
\begin{equation*}
\begin{pmatrix}
a_{1,1} &  &  &  &  & 0 \\ 
& a_{1,1}^{2} &  &  &  & 0 \\ 
&  & a_{1,1}^{3} &  &  & \vdots \\ 
& \ast &  &  &  & 0 \\ 
&  &  &  & a_{1,1}^{n-1} & a_{n-1,n} \\ 
a_{n,1} & 0 & \dots & 0 & 0 & a_{n,n} \\ 
&  &  &  &  & 
\end{pmatrix}%
,
\begin{pmatrix}
a_{1,1} &  &  &  &  & 0 \\ 
& a_{1,1}^{2} &  &  &  & \vdots \\ 
&  & a_{1,1}^{3} &  &  & 0 \\ 
& \ast &  &  &  & -a_{n,1}a_{1,1}^{(n-3)/2} \\ 
&  &  &  & a_{1,1}^{n-1} & a_{n-1,n} \\ 
a_{n,1} & 0 & \dots & 0 & 0 & a_{1,1}^{(n-1)/2} \\ 
&  &  &  &  & 
\end{pmatrix}%
.
\end{equation*}
\end{lemma}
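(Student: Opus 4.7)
The plan is to pin down the shape of $\phi_{1,s}^{n}$ by exploiting the characteristic subspaces that any automorphism must preserve, namely the derived ideal $\mathcal{P}^{2}=\langle e_{2},\ldots,e_{n-1}\rangle$, the annihilator $\mathrm{Ann}(\mathcal{P})$, and the null-filiform subalgebra generated by $e_{1}$. I would first parametrise $\phi(e_{1})=u+a_{n,1}e_{n}$ with $u=\sum_{i=1}^{n-1}a_{i,1}e_{i}$ and argue, using that $\phi(e_{n-1})=\phi(e_{1})^{n-1}\neq 0$ by injectivity, that a direct expansion yields $u^{n-1}=a_{1,1}^{n-1}e_{n-1}$, forcing $a_{1,1}\neq 0$.

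For $\mu_{1,1}^{n}$, the fact that $e_{n}\cdot e_{k}=0$ for every $k$ gives $\mathrm{Ann}(\mathcal{P})=\langle e_{n-1},e_{n}\rangle$, which is $\phi$-invariant, so $\phi(e_{n})=a_{n-1,n}e_{n-1}+a_{n,n}e_{n}$. Because all products involving $e_{n}$ vanish, a straightforward calculation yields $\phi(e_{j})=\phi(e_{1})^{j}=u^{j}\in\langle e_{j},\ldots,e_{n-1}\rangle$ with leading coefficient $a_{1,1}^{j}$ for $2\leq j\leq n-1$; this simultaneously accounts for the upper-triangular pattern of the first $n-1$ columns, the vanishing of the last row in columns $2,\ldots,n-1$, and the diagonal entries $a_{1,1},a_{1,1}^{2},\ldots,a_{1,1}^{n-1}$. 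Invertibility finally reduces to $a_{n,n}\neq 0$.

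For $\mu_{1,2}^{n}$ the annihilator shrinks to $\langle e_{n-1}\rangle$, so $\phi(e_{n})$ has to be pinned down by combining the two relations $\phi(e_{1})\cdot\phi(e_{n})=0$ and $\phi(e_{n})^{2}=\phi(e_{n-1})=a_{1,1}^{n-1}e_{n-1}$. Writing $\phi(e_{n})=v+a_{n,n}e_{n}$ with $v=\sum_{k=1}^{n-1}a_{k,n}e_{k}$, and using $u\cdot e_{n}=v\cdot e_{n}=0$ while $e_{n}^{2}=e_{n-1}$, the first relation collapses to $uv+a_{n,1}a_{n,n}e_{n-1}=0$. Extracting the coefficient of $e_{k+1}$ for $k=1,\ldots,n-3$ inductively forces $a_{1,1}a_{k,n}=0$, hence $a_{k,n}=0$ in that range, and the coefficient of $e_{n-1}$ then yields $a_{n-2,n}=-a_{n,1}a_{n,n}/a_{1,1}$. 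The second relation simplifies to $a_{n,n}^{2}e_{n-1}=a_{1,1}^{n-1}e_{n-1}$ (all other cross-products vanish because $i+j>n-1$ for the surviving basis pairs), giving $a_{n,n}=a_{1,1}^{(n-1)/2}$ up to a choice of square root; substitution then delivers $a_{n-2,n}=-a_{n,1}a_{1,1}^{(n-3)/2}$.

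The main obstacle I anticipate is the bookkeeping in the $\mu_{1,2}^{n}$ case, particularly the induction that eliminates $a_{1,n},\ldots,a_{n-3,n}$ and the subtlety that here $\phi(e_{2})=u^{2}+a_{n,1}^{2}e_{n-1}$ picks up an extra term from $e_{n}^{2}=e_{n-1}$ which is absent for $\mu_{1,1}^{n}$; one has to verify that the cleaner identity $\phi(e_{j})=u^{j}$ nevertheless reasserts itself for $j\geq 3$, so that the columns $3,\ldots,n-1$ admit the same description as in the $\mu_{1,1}^{n}$ case. Once all parameters have been identified, sufficiency — that every matrix of the stated form indeed defines an automorphism — follows by a direct check of the defining relations on the chosen basis.
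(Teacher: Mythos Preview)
Your argument is correct. The paper, however, does not actually prove this lemma: it merely cites \cite{K21} for the description of $\mathrm{Aut}(\mu_{1,s}^{n})$ and records the resulting matrices. So your proposal supplies a self-contained proof where the paper offers none.

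The route you take is the natural one. Using the invariance of $\mathcal{P}^{2}=\langle e_{2},\ldots,e_{n-1}\rangle$ and of $\mathrm{Ann}(\mathcal{P})$, together with the propagation formula $\phi(e_{j})=\phi(e_{1})^{j}$ for $2\leq j\leq n-1$, immediately yields the lower-triangular block with diagonal $a_{1,1}^{j}$ and the vanishing of the last row in columns $2,\ldots,n-1$. For $\mu_{1,2}^{n}$ your two-constraint analysis of $\phi(e_{n})$ is accurate: the induction on the coefficients of $uv+a_{n,1}a_{n,n}e_{n-1}=0$ kills $a_{1,n},\ldots,a_{n-3,n}$ and fixes $a_{n-2,n}=-a_{n,1}a_{n,n}/a_{1,1}$, while $\phi(e_{n})^{2}=\phi(e_{n-1})$ forces $a_{n,n}^{2}=a_{1,1}^{n-1}$. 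Your worry about the anomalous term $a_{n,1}^{2}e_{n-1}$ in $\phi(e_{2})$ is well placed but harmless: it lands in the $*$ region of the second column, and because $u\cdot e_{n-1}=0$ and $e_{n}\cdot u^{k}=0$ for $k\geq 2$, the identity $\phi(e_{j})=u^{j}$ does indeed resume for $j\geq 3$, exactly as you anticipate. The sign ambiguity in $a_{n,n}=\pm a_{1,1}^{(n-1)/2}$ is absorbed by the choice of square root, consistent with the paper's notation.
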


Finally, we address the isomorphism problem.

\begin{theorem}
\label{Poisson with filiform}Let $\left( \mathcal{P},\cdot ,\left\{
-,-\right\} \right) $\ be an $n$-dimensional ($n>3$) complex Poisson algebra such that $%
\left( \mathcal{P},\cdot \right) $\ is filiform. Then $\left( 
\mathcal{P},\cdot ,\left\{ -,-\right\} \right) $ is isomorphic to one of the
following algebras:

\begin{itemize}
\item $\mathcal{P}_{1,1}^{n}=P_{0}^{n-1}\oplus \mathbb{C}e_{n}:e_{i}\cdot
e_{j}=e_{i+j}.$

\item $\mathcal{P}_{1,2}^{n}:\left\{ 
\begin{array}{c}
e_{i}\cdot e_{j}=e_{i+j}, \\ 
\left\{ e_{1},e_{n}\right\} =e_{n}.%
\end{array}%
\right. $

\item $\mathcal{P}_{1,3}^{n}:\left\{ 
\begin{array}{c}
e_{i}\cdot e_{j}=e_{i+j}, \\ 
\left\{ e_{1},e_{n}\right\} =e_{n-1}.%
\end{array}%
\right. $

\item $\mathcal{P}_{1,4}^{n}:e_{i}\cdot e_{j}=e_{i+j},e_{n}\cdot
e_{n}=e_{n-1}.$

\item $\mathcal{P}_{1,5}^{n}:\left\{ 
\begin{array}{c}
e_{i}\cdot e_{j}=e_{i+j},e_{n}\cdot e_{n}=e_{n-1}, \\ 
\left\{ e_{1},e_{n}\right\} =e_{n-1}.%
\end{array}%
\right. $
\end{itemize}
where $2\leq i+j\leq n-1$.
\end{theorem}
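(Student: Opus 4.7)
The plan is to follow the three-step method established in Section~\ref{sec1}. The first step, computing $Z^{2}(\mathcal{P},\mathcal{P})$, is already done in Lemma~\ref{Z2 for filiform}; the second is to determine the orbits of $\textrm{Aut}(\mathcal{P})$ acting on $Z^{2}(\mathcal{P},\mathcal{P})$ by the rule $\theta * \phi$ recalled before; the third is to read off the Poisson bracket from a chosen representative in each orbit. The two subcases $\mu_{1,1}^{n}$ and $\mu_{1,2}^{n}$ of Theorem~\ref{filiform} are handled separately, using the explicit descriptions of $\textrm{Aut}(\mu_{1,s}^{n})$ supplied by the preceding lemma.

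For $(\mathcal{P},\cdot) = \mu_{1,1}^{n}$, every $\theta\in Z^{2}(\mathcal{P},\mathcal{P})$ has the shape $\theta = \alpha \Delta_{1,n}(-,-)e_{n-1} + \beta \Delta_{1,n}(-,-)e_{n}$, so $\theta$ is determined by the pair $(\alpha,\beta)\in\mathbb{C}^{2}$, and $\theta$ is supported on $\{e_{1},e_{n}\}$. Given $\phi\in\textrm{Aut}(\mu_{1,1}^{n})$, one computes $(\theta*\phi)(e_{1},e_{n}) = \phi^{-1}\bigl(\theta(\phi(e_{1}),\phi(e_{n}))\bigr)$. Because $\theta$ vanishes on all pairs $(e_{i},e_{j})$ with $\{i,j\}\neq\{1,n\}$ and because the $(1,n)$-entry of $\phi$ is zero, this simplifies to $\phi^{-1}\bigl(a_{1,1}a_{n,n}(\alpha e_{n-1} + \beta e_{n})\bigr)$. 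Using the explicit form of $\phi$ and the fact that $\phi$ stabilizes the line $\langle e_{n-1}\rangle$, a direct inversion on the block spanned by $e_{n-1},e_{n}$ yields
\[
\beta' = \frac{a_{1,1}a_{n,n}}{a_{n,n}}\,\beta = a_{1,1}\beta,\qquad
\alpha' = \frac{a_{n,n}}{a_{1,1}^{n-2}}\,\alpha - \frac{a_{n-1,n}}{a_{1,1}^{n-1}}\beta'
\]
(with the precise scalar factors to be verified against $\phi^{-1}$). From this it is clear that (i) if $(\alpha,\beta)=(0,0)$ we obtain the representative $\theta=0$ and hence $\mathcal{P}_{1,1}^{n}$; (ii) if $\beta\neq 0$, we first use $a_{1,1}$ to normalize $\beta'=1$ and then use the free parameter $a_{n-1,n}$ to cancel $\alpha'$, obtaining $\theta=\Delta_{1,n}(-,-)e_{n}$ and hence $\mathcal{P}_{1,2}^{n}$; (iii) if $\beta=0$ and $\alpha\neq 0$, we use $a_{n,n}$ to normalize $\alpha'=1$, obtaining $\theta=\Delta_{1,n}(-,-)e_{n-1}$ and hence $\mathcal{P}_{1,3}^{n}$.

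For $(\mathcal{P},\cdot) = \mu_{1,2}^{n}$, the second part of Lemma~\ref{Z2 for filiform} reduces the parameter space to a single scalar $\alpha$. A fully analogous calculation using the automorphisms $\phi_{1,2}^{n}$, which are constrained by $a_{n,n} = a_{1,1}^{(n-1)/2}$, produces a transformation law of the form $\alpha' = (\text{nonzero scalar})\,\alpha$. The orbit decomposition is therefore simply $\alpha = 0$, yielding $\mathcal{P}_{1,4}^{n}$, and $\alpha\neq 0$, which can be normalized to $\alpha=1$, yielding $\mathcal{P}_{1,5}^{n}$.

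The main obstacle will be the explicit inversion of $\phi$ in the $\mu_{1,1}^{n}$ case to justify that the parameter $a_{n-1,n}$ genuinely suffices to kill $\alpha'$ whenever $\beta'\neq 0$ (without disturbing $\beta'$), and the verification that the five algebras obtained are pairwise non-isomorphic. The latter is straightforward: $\mathcal{P}_{1,4}^{n}$ and $\mathcal{P}_{1,5}^{n}$ are distinguished from the other three by the presence of the relation $e_{n}\cdot e_{n}=e_{n-1}$ in their associative part, which is an invariant of the commutative associative algebra $(\mathcal{P},\cdot)$; within each of the groups $\{\mathcal{P}_{1,1}^{n},\mathcal{P}_{1,2}^{n},\mathcal{P}_{1,3}^{n}\}$ and $\{\mathcal{P}_{1,4}^{n},\mathcal{P}_{1,5}^{n}\}$, the Poisson bracket is distinguished by whether it is zero, takes values in $\mathbb{C}e_{n}\setminus\mathbb{C}e_{n-1}$, or takes values in $\mathbb{C}e_{n-1}$, all of which are preserved under automorphisms of $(\mathcal{P},\cdot)$ because $\langle e_{n-1}\rangle$ is the associator ideal $\mathcal{P}^{n-2}\cdot\mathcal{P}$ in both $\mu_{1,1}^{n}$ and $\mu_{1,2}^{n}$.
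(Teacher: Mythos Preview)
Your proposal is correct and follows essentially the same approach as the paper: both split into the cases $\mu_{1,1}^{n}$ and $\mu_{1,2}^{n}$, compute $(\theta*\phi)(e_{1},e_{n})$ via $\phi^{-1}$ restricted to $\langle e_{n-1},e_{n}\rangle$, and obtain the same transformation laws (your $\alpha',\beta'$ unwind to exactly the paper's $\alpha'=a_{1,1}^{2-n}(\alpha a_{n,n}-\beta a_{n-1,n})$, $\beta'=\beta a_{1,1}$), leading to the identical three-way and two-way case splits. The non-isomorphism discussion you add is superfluous---Lemma~\ref{main1} of Section~\ref{sec1} (orbits under $\mathrm{Aut}(\mathcal{P},\cdot)$) already guarantees it within each associative type, and Theorem~\ref{filiform} separates the two types---and your phrase ``associator ideal'' is a misnomer (you mean the power $\mathcal{P}^{n-1}$), but neither affects correctness.
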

\begin{proof}
By Theorem \ref{filiform}, we may assume $\left( \mathcal{P},\cdot \right)
\in \left\{ \mu _{1,1}^{n},\mu _{1,2}^{n}\right\} $. Therefore, we have the
following cases:

\begin{enumerate}
\item $\left( \mathcal{P},\cdot \right) =\mu _{1,1}^{n}$. Choose an
arbitrary element $\theta $\ of $Z^{2}\left( \mathcal{P},\mathcal{P}\right) $%
. Then $\theta \left( x,y\right) =\alpha \Delta _{1,n}\left( x,y\right)
e_{n-1}+\beta \Delta _{1,n}\left( x,y\right) e_{n}$. Consider $\phi \in
 \textrm{Aut}(\mu _{1,1}^{n})$ given by $\big(a_{i,j}\big)$, and let $\theta *\phi \left( x,y\right) =\alpha
^{\prime }\Delta _{1,n}\left( x,y\right) e_{n-1}+\beta ^{\prime }\Delta
_{1,n}\left( x,y\right) e_{n}$. Since $\theta *\phi \left( e_{1},e_{n}\right)
=\phi ^{-1}\left( \theta \left( \phi(e_{1}),\phi(e_{n})\right) \right) $, we have%
\begin{equation*}
\alpha ^{\prime }e_{n-1}+\beta ^{\prime }e_{n}=\alpha a_{1,1}a_{n,n}\phi
^{-1}\left( e_{n-1}\right) +\beta a_{1,1}a_{n,n}\phi ^{-1}\left(
e_{n}\right) .
\end{equation*}%
Moreover, we have $\phi ^{-1}\left( e_{n-1}\right) =a_{1,1}^{1-n}%
e_{n-1}$ and $\phi ^{-1}\left( e_{n}\right) =-a_{n-1,n}a_{1,1}^{1-n}%
e_{n-1}+\frac{1}{a_{n,n}}e_{n}$. Thus%
\begin{eqnarray*}
\alpha ^{\prime } &=&a_{1,1}^{2-n}\left( \alpha
a_{n,n}- \beta a_{n-1,n}\right) , \\
\beta ^{\prime } &=&\beta a_{1,1}.
\end{eqnarray*}%
If $\left( \alpha ,\beta \right) =0$, we get the algebra $\mathcal{P}%
_{1,1}^{n}$. Assume now $\left( \alpha ,\beta \right) \neq 0$. If $\beta
\neq 0$, we get the representative $\theta \left( x,y\right) =\Delta
_{1,n}\left( x,y\right) e_{n}$. So we obtain the algebra $\mathcal{P}%
_{1,2}^{n}$. If $\beta =0$, we get the representative $\theta \left(
x,y\right) =\Delta _{1,n}\left( x,y\right) e_{n-1}$. Thus we have the
algebra $\mathcal{P}_{1,3}^{n}$.

\item $\left( \mathcal{P},\cdot \right) =\mu _{1,2}^{n}$. Choose an
arbitrary element $\theta $\ of $Z^{2}\left( \mathcal{P},\mathcal{P}\right) $%
. Then $\theta \left( x,y\right) =\alpha \Delta _{1,n}\left( x,y\right)
e_{n-1}$. Consider $\phi 
\in \textrm{Aut}(\mu _{1,2}^{n})$ given by $\big(a_{i,j}\big)$, and let $\theta *\phi =\alpha ^{\prime }\Delta
_{1,n}e_{n-1}$. Since $\theta *\phi \left( e_{1},e_{n}\right) =\phi
^{-1}\left( \theta \left( \phi(e_{1}), \phi(e_{n})\right) \right) $, we have%
\begin{equation*}
\alpha ^{\prime }e_{n-1}=\alpha a_{1,1}a_{1,1}^{(n-1)/2}\phi ^{-1}\left(
e_{n-1}\right) =\alpha a_{1,1}^{(3-n)/2}e_{n-1}.
\end{equation*}%
So we have $\alpha ^{\prime }=\alpha a_{1,1}^{(3-n)/2}$. If 
$\alpha =0$, we get the algebra $\mathcal{P}_{1,4}^{n}$. If $\alpha \neq 0$,
we obtain the representative $\theta \left( x,y\right) =\Delta _{1,n}\left(
x,y\right) e_{n-1}$. Therefore we have the algebra $\mathcal{P}_{1,5}^{n}$.
\end{enumerate} 
In short, we obtain five algebras, up to isomorphisms, under the conditions of the theorem.
\end{proof}

\begin{remark}
As the proof of Lemma \ref{Z^2 for nul-filiform} and Lemma \ref{Z2 for
filiform} only depends  on the identity $\theta \left( x\cdot y,z\right)
=x\cdot \theta \left( y,z\right) +y\cdot \theta \left( x,z\right) $, Theorem %
\ref{Poisson with null-filiform} and Theorem \ref{Poisson with filiform} are also
true for Malcev Poisson algebras.
\end{remark}

\section{The geometric classification of the 3-dimensional Poisson algebras}
\label{sec4}

Given a complex vector space ${\mathbb V}$ of dimension $n$, the set of bilinear maps $\textrm{Bil}({\mathbb V} , {\mathbb V}) \cong \textrm{Hom}({\mathbb V} ^{\otimes2}, {\mathbb V})\cong ({\mathbb V}^*)^{\otimes2} \otimes {\mathbb V}$ is a vector space of dimension $n^3$. The set of pairs of bilinear maps (or bilinear pairs) $\textrm{Bil}({\mathbb V} , {\mathbb V}) \oplus \textrm{Bil}({\mathbb V} , {\mathbb V}) \cong ({\mathbb V}^*)^{\otimes2} \otimes {\mathbb V} \oplus ({\mathbb V}^*)^{\otimes2} \otimes {\mathbb V}$ which is a vector space of dimension $2n^3$. This vector space has the structure of the affine space $\mathbb{C}^{2n^3}$ in the following sense:
fixed a basis $e_1, \ldots, e_n$ of ${\mathbb V}$, then any pair with multiplication $(\mu, \mu')$, is determined by some parameters $c_{ij}^k, c_{ij}'^k \in \mathbb{C}$,  called {structural constants},  such that
$$\mu(e_i, e_j) = \sum_{p=1}^n c_{ij}^k e_k \textrm{ and } \mu'(e_i, e_j) = \sum_{p=1}^n c_{ij}'^k e_k$$
which corresponds to a point in the affine space $\mathbb{C}^{2n^3}$. Then a set of bilinear pairs $\mathcal S$ corresponds to an algebraic variety, i.e., a Zariski closed set, if there are some polynomial equations in variables $c_{ij}^k, c_{ij}'^k$ with zero locus equal to the set of structural constants of the bilinear pairs in $\mathcal S$. Since given the identities defining  Poisson algebras we can obtain a set of polynomial equations in variables $c_{ij}^k, c_{ij}'^k$, the class of $n$-dimensional  Poisson algebras $\mathcal{P}_{n}$ is a variety. 
Now, consider the following action of $\textrm{GL}({\mathbb V})$ on $\P_{n}$:
$$(g*(\mu, \mu'))(x,y) := (g \mu (g^{-1} x, g^{-1} y), g \mu' (g^{-1} x, g^{-1} y))$$
for $g\in\textrm{GL}({\mathbb V})$, $(\mu, \mu')\in \P_{n}$ and for any $x, y \in {\mathbb V}$. Observe that the $\textrm{GL}({\mathbb V})$-orbit of $(\mu, \mu')$, denoted $O((\mu, \mu'))$, contains all the structural constants of the bilinear pairs isomorphic to the  Poisson algebra with structural constants $(\mu, \mu')$.

In the previous section, we gave a decomposition of ${\P_3}$ into $\textrm{GL}(\mathbb V)$-orbits, i.e., an algebraic classification of the 3-dimensional Poisson algebras. In this section, we will describe the closures of orbits of $(\mu, \mu')\in\P_{3}$, denoted by $\overline{O((\mu, \mu'))}$, and we will give a geometric classification of ${\P_3}$, which consists in describing its irreducible components. Recall that any affine variety can be represented as a finite union of its irreducible components in a unique way.

Additionally, describing the irreducible components of a variety, such as ${\P_3}$, gives us 
which are those bilinear pairs with an open $\textrm{GL}(\mathbb V)$-orbit. 

\begin{definition}
\rm Let $\mathcal{P} $ and $\mathcal{P}'$ be two $n$-dimensional  Poisson algebras and $(\mu, \mu'), (\lambda,\lambda') \in \P_{n}$ be their representatives in the affine space, respectively. We say $\mathcal{P}$ {degenerates}  to $\mathcal{P}'$, and write $\mathcal{P} \to \mathcal{P} '$, if $(\lambda,\lambda')\in\overline{O((\mu, \mu'))}$. If $\mathcal{P}  \not\cong \mathcal{P}'$, then we call it a  {proper degeneration}.

Conversely, if $(\lambda,\lambda')\not\in\overline{O((\mu, \mu'))}$ then we call it a 
{non-degeneration} and we write ${\mathcal{P} }\not\to {\mathcal{P} }'$.
\end{definition}

\noindent Note that the definition of a degeneration does not depend on the choice of $(\mu, \mu')$ and $(\lambda,\lambda')$. Also, due to the transitivity of the notion of degeneration  (that is, if ${\mathcal{P} }\to {\mathcal{P} }''$ and ${\mathcal{P} }''\to {\mathcal{P} }'$ then ${\mathcal{P} }\to{\mathcal{P} }'$) we have the following definitions.

\begin{definition} \rm
Let $\mathcal{P} $ and $\mathcal{P} '$ be two $n$-dimensional  Poisson algebras such that ${\mathcal{P} } \to {\mathcal{P} }'$. If there is no ${\mathcal{P} }''$ such that ${\mathcal{P} }\to {\mathcal{P} }''$ and ${\mathcal{P} }''\to {\mathcal{P} }'$ are proper degenerations, then ${\mathcal{P} }\to {\mathcal{P} }'$ is called a  primary degeneration. Analogously, let $\mathcal{P} $ and $\mathcal{P} '$ be two $n$-dimensional  Poisson algebras such that $\mathcal{P} \not\to \mathcal{P} '$, if there are no ${\mathcal{P} }''$ and ${\mathcal{P} }'''$ such that ${\mathcal{P} }''\to {\mathcal{P} }$, ${\mathcal{P} }'\to {\mathcal{P} }'''$, ${\mathcal{P} }''\not\to {\mathcal{P} }'''$ and one of the assertions ${\mathcal{P} }''\to {\mathcal{P} }$ and ${\mathcal{P} }'\to {\mathcal{P} }'''$ is a proper degeneration,  then ${\mathcal{P} } \not\to {\mathcal{P} }'$ is called a  { primary non-degeneration}.
\end{definition}

Note that it suffices to prove primary degenerations and non-degenerations to fully describe the geometry of a variety. Therefore, in this work we will focus on proving the primary degenerations and non-degenerations.

{
Firstly, if $\mathfrak{Der}( \mathcal{P} )$ denotes the Lie algebra of derivations of  $\mathcal{P} $, $\mathrm{dim}\, \mathfrak{Der}( \mathcal{P} )$ is equal to $\mathrm{dim}\, \textrm{Aut}( \mathcal{P} )$, as an algebraic group. Recall the formula $\mathrm{dim}\, Gx = \mathrm{dim}\, G - \mathrm{dim}\,\mathrm{Stab}(x)$, where $G$ is an algebraic group acting on a variety $X$, $x\in X$, $Gx$ denotes the orbit of $x$ and $\mathrm{Stab}(x)$ denotes the stabilizer of $x$. Then,  $\mathrm{dim}\,O((\mu, \mu')) = n^2 - \mathrm{dim}\,\mathfrak{Der}(\mathcal{P})$. Therefore, if $ \mathcal{P} \to  \mathcal{P} '$ and  $\mathcal{P} \not\cong  \mathcal{P} '$, we have that $\mathrm{dim}\,\mathfrak{Der}( \mathcal{P} )<\mathrm{dim}\,\mathfrak{Der}( \mathcal{P} ')$. Hence, we will check the assertion ${\mathcal{P} }\to {\mathcal{P} }'$ only for ${\mathcal{P} }$ and ${\mathcal{P} }'$ such that $\mathrm{dim}\,\mathfrak{Der}({\mathcal{P} })<\mathrm{dim}\,\mathfrak{Der}({\mathcal{P} }')$.}


{
Secondly, let ${\mathcal{P} }$ and ${\mathcal{P} }'$ be two  Poisson algebras represented by the structures $(\mu, \mu')$ and $(\lambda, \lambda')$ from $\P_{n}$, respectively. If there exist a parametrized change of basis $g: \mathbb{C}^* \to \textrm{GL}({\mathbb V})$ such that:
$$\lim\limits_{t\to 0} g(t)*(\mu, \mu') = (\lambda, \lambda'),$$
then ${\mathcal{P} }\to {\mathcal{P} }'$. To prove primary degenerations, we will provide the map $g$.}

Thirdly, now to prove primary non-degenerations we will use the following lemma. 

\begin{lemma}\label{main1}
Consider two  Poisson algebras $\mathcal{P}$ and $\mathcal{P}'$. Suppose $\mathcal{P} \to \mathcal{P}'$. Let C be a Zariski closed in $\P_{n}$ that is stable by the action of the invertible upper (lower) triangular matrices. Then if there is a representation $(\mu, \mu')$ of $\mathcal{P}$ in C, then there is a representation $(\lambda, \lambda')$ of $\mathcal{P}'$ in C.
\end{lemma}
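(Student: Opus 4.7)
The plan is to reduce the lemma to a standard fact from the theory of algebraic group actions: if a connected linear algebraic group $G$ acts on a variety $X$ and $B\subseteq G$ is a Borel subgroup, then for every $x\in X$,
\[
\overline{G\cdot x}\;=\;G\cdot \overline{B\cdot x}.
\]
This identity is a consequence of the completeness of the flag variety $G/B$: the action map $G\times^{B}\overline{B\cdot x}\to X$ factors through a proper map, so its image is closed; since it contains $G\cdot x$, it coincides with $\overline{G\cdot x}$. I would apply this with $G=\textrm{GL}(\mathbb{V})$ and with $B$ equal to the subgroup of invertible upper triangular matrices, which is a standard Borel in $\textrm{GL}(\mathbb{V})$. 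The lower triangular case is completely symmetric, obtained by conjugating throughout by the reversal permutation matrix that swaps the two opposite Borels.

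With this setup in hand, I would first fix a representative $(\mu,\mu')\in C$ of $\mathcal{P}$, given by the hypothesis. Since $C$ is $B$-stable, the whole orbit $B\cdot(\mu,\mu')$ lies in $C$; because $C$ is Zariski closed, its closure satisfies $\overline{B\cdot(\mu,\mu')}\subseteq C$ as well. Next, I would pick any representative $(\lambda,\lambda')$ of $\mathcal{P}'$. The degeneration hypothesis $\mathcal{P}\to\mathcal{P}'$ places $(\lambda,\lambda')$ in $\overline{O((\mu,\mu'))}$, and the displayed identity rewrites
\[
\overline{O((\mu,\mu'))}\;=\;\textrm{GL}(\mathbb{V})\cdot\overline{B\cdot(\mu,\mu')},
\]
so that $(\lambda,\lambda')=g\cdot(\nu,\nu')$ for some $g\in\textrm{GL}(\mathbb{V})$ and some $(\nu,\nu')\in\overline{B\cdot(\mu,\mu')}$. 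By the previous step $(\nu,\nu')\in C$, and since $(\nu,\nu')=g^{-1}\cdot(\lambda,\lambda')$ sits in the same $\textrm{GL}(\mathbb{V})$-orbit as $(\lambda,\lambda')$, it is itself a representative of $\mathcal{P}'$ lying in $C$, which is exactly what we want.

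The only non-elementary input is the identity $\overline{G\cdot x}=G\cdot\overline{B\cdot x}$, which I expect to be the only real obstacle worth flagging; once it is invoked with a short justification (or a citation to a standard reference on linear algebraic groups), the rest of the argument is essentially unpacking what it means for $C$ to be Borel-stable and Zariski closed, and I foresee no genuine difficulty in the remainder of the proof.
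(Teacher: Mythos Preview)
Your proposal is correct and follows essentially the same approach as the paper: both invoke the identity $\overline{G\cdot x}=G\cdot\overline{B\cdot x}$ for a Borel subgroup $B\subseteq G=\mathrm{GL}(\mathbb{V})$ (the paper cites Grunewald--O'Halloran for it, whereas you sketch the proper-map argument), and then deduce the conclusion from $B$-stability and closedness of $C$ exactly as you do.
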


\begin{proof}
Suppose $(\mu, \mu')$ is a representation of $\mathcal{P}$ in $C$, then $\overline{O(\mu, \mu')}=\overline{\GL(\mathcal{P})*(\mu, \mu')}$. 
By \cite[Proposition 1.7]{GRH} we have 
\begin{equation}\label{unique}
\overline{\GL(\mathcal{P})*(\mu,\mu')}=\GL(\mathcal{P})*(\overline{T(\mu,\mu')})
\end{equation}
where $T$ is the subgroup of upper (lower) triangular matrices of $\GL(\mathcal{P})$, which is a borel subgroup of $\GL(\mathcal{P})$. Now, taking any 
$(\lambda,\lambda')\in \overline{\GL(\mathcal {P})*(\mu,\mu')}$, formula (\ref{unique}) implies that  $(\lambda,\lambda')=g*(\gamma,\gamma')$ where 
$(\gamma,\gamma')\in\overline{T*(\mu,\mu')}$. But
$(\mu,\mu')\in C$ whence $T*(\mu,\mu')\subset C$ and 
$\overline{T*(\mu,\mu')}\subset C$. So $(\gamma,\gamma')\in C$ and we conclude that
$(\lambda,\lambda')=g*c$ for some $c\in C$. So there is a representative of $(\lambda,\lambda')$ in $C$.
\end{proof}

\begin{remark}

By Lemma \ref{main1}, if we have a polynomial identity in two multiplications {$\mathfrak{P}$}, then it defines a Zariski closed set in {$\P_n$} which is stable up to isomorphism (and in particular, it is stable up to invertible lower triangular matrices). Therefore, any degeneration of a  Poisson algebra that satisfies {$\mathfrak{P}$}, also satisfies {$\mathfrak{P}$}.

\end{remark}

Moreover, we have the following corollary, similar to results found in \cite{bb09, GRH2}.

\begin{corollary}
\label{nondeg}
Consider two  Poisson algebras $\mathcal{P}$ and $\mathcal{P}'$. Suppose $\mathcal{P} \to \mathcal{P}'$. Then if follows:
\begin{enumerate}
    \item $\mathrm{dim}\, \mathrm{Ann} (\mathcal{P}, \cdot)\leq\mathrm{dim}\,\mathrm{Ann}(\mathcal{P}', \cdot)$, 
    \item $\mathrm{dim}\, \mathrm{Ann} (\mathcal{P}, \left\{-,-\right\})\leq\mathrm{dim}\,\mathrm{Ann}(\mathcal{P}', \left\{-,-\right\})$, 
    \item $\mathrm{dim}\, \mathrm{Ann}(\mathcal{P})\leq\mathrm{dim}\,\mathrm{Ann}(\mathcal{P}')$, 
    \item $\mathrm{dim}\, \mathcal{P}\cdot\mathcal{P} \geq \mathrm{dim}\, \mathcal{P}'\cdot\mathcal{P}'$,
    \item $\mathrm{dim}\, \left\{\mathcal{P},\mathcal{P}\right\} \geq \mathrm{dim}\, \left\{\mathcal{P}', \mathcal{P}'\right\}$,
    \item $\mathrm{dim}\, \mathcal{P}^2 \geq \mathrm{dim}\, \mathcal{P}'^2$, 
    
\end{enumerate}
where $\mathrm{Ann}(\mathcal{P})=\left\{x\in \mathcal{P}: x\cdot\mathcal{P}+\left\{x, \mathcal{P}\right\}=0\right\}$ and $\mathcal{P}^2 = \mathcal{P}\cdot\mathcal{P} + \left\{\mathcal{P},\mathcal{P}\right\}$.
\end{corollary}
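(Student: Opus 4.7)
The plan is to apply Lemma \ref{main1} six times, once for each assertion, using for each a suitable Zariski-closed, $\GL(\mathbb V)$-invariant subset $C$ of $\P_n$ which contains a representative of $\mathcal{P}$ by hypothesis; Lemma \ref{main1} then forces a representative of $\mathcal{P}'$ to also lie in $C$, which translates back into the desired inequality. The observation unifying all six cases is that each of the relevant dimensions is ``the rank (or corank) of a matrix whose entries are linear in the structural constants of $(\mu,\mu')$'', which is automatically both Zariski closed and invariant under arbitrary changes of basis.

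Concretely, for assertion (4) set $r=\dim\mathcal{P}\cdot\mathcal{P}$ and take
\[
C^{(4)}_r=\left\{(\mu,\mu')\in \P_n : \dim\mu(\mathbb V\otimes \mathbb V)\leq r\right\}.
\]
Writing $\mu(e_i,e_j)=\sum_k c^k_{ij}e_k$, membership in $C^{(4)}_r$ is exactly the vanishing of all $(r+1)\times(r+1)$ minors of the $n\times n^2$ matrix $(c^k_{ij})$, so $C^{(4)}_r$ is Zariski closed; and since the dimension of the image of $\mu$ is an isomorphism invariant, $C^{(4)}_r$ is $\GL(\mathbb V)$-stable, hence \emph{a fortiori} stable under the Borel subgroup of upper triangular matrices. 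Lemma \ref{main1} then gives $\dim \mathcal{P}'\cdot\mathcal{P}'\leq r$. Assertions (5) and (6) follow by the same argument, applied to $\mu'$ in place of $\mu$ and to the combined $n\times 2n^2$ matrix of the structural constants of both $\mu$ and $\mu'$, respectively.

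For assertion (1), set $s=\dim\mathrm{Ann}(\mathcal{P},\cdot)$ and consider
\[
D^{(1)}_s=\left\{(\mu,\mu')\in\P_n : \dim\{x\in\mathbb V : \mu(x,-)=0\}\geq s\right\}.
\]
Expanding $x=\sum_i x_i e_i$, the condition $\mu(x,-)=0$ is the linear system $\sum_i x_i c^k_{ij}=0$ for all $j,k$, so its solution space has dimension $\geq s$ iff the associated $n^2\times n$ matrix (with entries linear in the $c^k_{ij}$) has rank at most $n-s$, a Zariski closed and $\GL(\mathbb V)$-invariant condition. Lemma \ref{main1} yields $\dim\mathrm{Ann}(\mathcal{P}',\cdot)\geq s$. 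Assertions (2) and (3) follow identically, using $\mu'$, or the vertical stacking of the two annihilator systems into one matrix, respectively. No real obstacle is expected: once the uniform rank-condition framework is set up, each of the six items is a one-line verification, and the only care required is bookkeeping the correct matrix in each instance.
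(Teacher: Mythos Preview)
The paper does not supply an explicit proof of this corollary; it simply records it as a consequence of Lemma~\ref{main1}, analogous to known results in the cited references~\cite{bb09, GRH2}. Your argument correctly fills in precisely the details the paper omits: for each item you exhibit a Zariski-closed, $\GL(\mathbb V)$-stable (hence Borel-stable) rank locus in the structural constants and invoke Lemma~\ref{main1}. This is the intended route and is carried out correctly.
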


Let $\mathcal{R}$ be a set of polynomial equations in the variables $c_{ij}^k, c_{ij}'^k$ and in the conditions of the previous result. {Let $C_{\mathcal{R}}$ be the subclass of $C$ of all algebras satisfying the identities of $\mathcal{R}$}. Assume that $(\mu, \mu')\in {C_{\mathcal{R}}}$ and $O((\lambda, \lambda'))\cap {C_{\mathcal{R}}}=\varnothing$, give us the non-degeneration ${\mathcal{P} }\not\to {\mathcal{P} }'$. In this case, we call the identities in $\mathcal{R}$ a separating set for ${\mathcal{P} }\not\to {\mathcal{P} }'$.
To prove non-degenerations, we will present the corresponding separating set and we will omit the verification of the fact that $\mathcal{R}$ is stable under the action of the lower triangular matrices and of the fact that $O((\lambda,\lambda'))\cap \mathcal{R}=\varnothing$, which can be obtained by straightforward calculations using a software like Wolfram.

Finally, if the algebraic classification of the class under consideration is finite, then the graph of primary degenerations gives the whole geometric classification: the description of irreducible components can be easily obtained. However, the variety $\P_{3}$  of $3$-dimensional Poisson algebras contains infinitely many non-isomorphic algebras, since it has the families $\mathcal{P}_{3,4}^{*}$ and $\mathcal{P}_{3,16}^{*}$.

\begin{definition}
\rm
Let ${\mathcal{P} }(*)=\{{\mathcal{P} }(\alpha): {\alpha\in I}\}$ be a family of $n$-dimensional  Poisson algebras and let ${\mathcal{P} }'$ be another  Poisson algebra. Suppose that ${\mathcal{P} }(\alpha)$ is represented by the structure $(\mu(\alpha),\mu'(\alpha))\in\P_{n}$ for $\alpha\in I$ and ${\mathcal{P} }'$ is represented by the structure $(\lambda, \lambda')\in\P_{n}$. We say the family ${\mathcal{P} }(*)$ {degenerates}   to ${\mathcal{P} }'$, and write ${\mathcal{P} }(*)\to {\mathcal{P} }'$, if $(\lambda,\lambda')\in\overline{\{O((\mu(\alpha),\mu'(\alpha)))\}_{\alpha\in I}}$.

Conversely, if $(\lambda,\lambda')\not\in\overline{\{O((\mu(\alpha),\mu'(\alpha)))\}_{\alpha\in I}}$ then we call it a  {non-degeneration}, and we write ${\mathcal{P} }(*)\not\to {\mathcal{P} }'$.

\end{definition}


{On the one hand, to prove ${\mathcal{P} }(*)\to {\mathcal{P} }'$, suppose that ${\mathcal{P} }(\alpha)$ is represented by the structure $(\mu(\alpha),\mu'(\alpha))\in\P_{n}$ for $\alpha\in I$ and ${\mathcal{P} }'$ is represented by the structure $(\lambda, \lambda')\in\P_{n}$. If there exists a pair of maps $(f, g)$, where $f:\mathbb{C}^*\to I$ and $g: \mathbb{C}^* \to \textrm{GL}({\mathbb V})$ are such that:
$$\lim\limits_{t\to 0} g(t)*(\mu\big(f(t)\big),\mu'\big(f(t)\big)) = (\lambda, \lambda'),$$
then ${\mathcal{P} }(*)\to {\mathcal{P} }'$. }

On the other hand, to prove ${\mathcal{P} }(*)\not \to {\mathcal{P} }'$, we will use an analogue of the Lemma \ref{main1} for families of Poisson algebras.

\begin{lemma}\label{main2}
Consider the family of Poisson algebras $\mathcal{P}(*)$ and the Poisson algebra $\mathcal{P}'$. Suppose $\mathcal{P}(*) \to \mathcal{P}'$. Let C be a Zariski closed in $\P_{n}$ that is stable by the action of the invertible upper (lower) triangular matrices. Then if there is a representation $(\mu(\alpha), \mu'(\alpha))$ of $\mathcal{P}(\alpha)$ in C for every $\alpha\in I$, then there is a representation $(\lambda, \lambda')$ of $\mathcal{P}'$ in C.
\end{lemma}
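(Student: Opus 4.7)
The plan is to mimic the proof of Lemma \ref{main1} almost verbatim, replacing the single orbit with the union of orbits indexed by $\alpha\in I$, and checking that the Borel/closure argument of Grunewald and O'Halloran still applies to this family. First I would observe that the hypothesis ``there is a representation $(\mu(\alpha),\mu'(\alpha))$ of $\mathcal{P}(\alpha)$ in $C$ for every $\alpha\in I$'' lets us assume, after changing basis if necessary, that the distinguished structure $(\mu(\alpha),\mu'(\alpha))$ for each $\alpha$ lies in $C$ itself. Then, because $C$ is stable under the action of the Borel subgroup $T$ of upper (lower) triangular matrices in $\GL(\mathcal{P})$, we get $T*(\mu(\alpha),\mu'(\alpha))\subset C$ for every $\alpha$, and since $C$ is Zariski closed we also obtain
\[
\overline{T*(\mu(\alpha),\mu'(\alpha))}\subset C\quad\text{for every }\alpha\in I.
\]

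Next I would invoke the family analogue of \cite[Proposition 1.7]{GRH}, essentially the identity
\[
\overline{\bigcup_{\alpha\in I}\GL(\mathcal{P})*(\mu(\alpha),\mu'(\alpha))}
\;=\;\GL(\mathcal{P})*\overline{\bigcup_{\alpha\in I}T*(\mu(\alpha),\mu'(\alpha))},
\]
whose right-hand side makes sense because $T$ is a Borel subgroup and $\GL(\mathcal{P})/T$ is a complete variety, so the image of a closed set under the projection from $\GL(\mathcal{P})\times X$ is closed. Taking any $(\lambda,\lambda')\in\overline{\{O((\mu(\alpha),\mu'(\alpha)))\}_{\alpha\in I}}$, this identity gives $(\lambda,\lambda')=g*(\gamma,\gamma')$ for some $g\in\GL(\mathcal{P})$ and some $(\gamma,\gamma')\in\overline{\bigcup_{\alpha\in I}T*(\mu(\alpha),\mu'(\alpha))}$.

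Finally I would conclude by combining the two previous steps: the closure on the right-hand side is contained in $C$, because each $\overline{T*(\mu(\alpha),\mu'(\alpha))}$ is contained in $C$ and $C$ is Zariski closed, so the union of these closures is contained in $C$ and hence its closure is still contained in $C$. Therefore $(\gamma,\gamma')\in C$ and this element is itself a representative of $\mathcal{P}'$ in $C$, proving the lemma. The only step I expect to be genuinely non-trivial is the justification of the family-version of the Grunewald--O'Halloran identity; everything else is formal. The point is that the union $\bigcup_{\alpha}T*(\mu(\alpha),\mu'(\alpha))$ plays the role that $T*(\mu,\mu')$ played in Lemma \ref{main1}, and the completeness of $\GL(\mathcal{P})/T$ propagates from the single orbit case to the parametrized case.
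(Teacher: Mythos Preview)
The paper does not actually supply a proof of this lemma: it simply states Lemma~\ref{main2} as the ``analogue of Lemma~\ref{main1} for families of Poisson algebras'' and moves on. Your proposal fills this gap correctly and in exactly the spirit the paper intends: you rerun the Borel/closure argument from the proof of Lemma~\ref{main1}, replacing the single point $(\mu,\mu')$ by the subset $S=\{(\mu(\alpha),\mu'(\alpha)):\alpha\in I\}\subset C$. The only substantive step, as you note, is the family version of \cite[Proposition~1.7]{GRH}, namely
\[
\overline{\GL(\mathcal{P})\!\ast\!S}\;=\;\GL(\mathcal{P})\!\ast\!\overline{T\!\ast\!S},
\]
and this holds for an arbitrary subset $S$, not just a singleton: the containment $\GL(\mathcal{P})\!\ast\!\overline{T\!\ast\!S}\subset\overline{\GL(\mathcal{P})\!\ast\!S}$ is automatic, and the reverse containment follows because $\overline{T\!\ast\!S}$ is closed and $T$-stable, so completeness of $\GL(\mathcal{P})/T$ makes $\GL(\mathcal{P})\!\ast\!\overline{T\!\ast\!S}$ closed. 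One minor simplification: in your final step you can skip the intermediate ``union of closures'' argument and just say that $T\!\ast\!S\subset C$ (because $S\subset C$ and $C$ is $T$-stable), hence $\overline{T\!\ast\!S}\subset C$ since $C$ is closed.
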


Similarly, constructing a set $\mathcal{R}$ in the conditions of the previous result, such that $(\mu(\alpha), \mu'(\alpha))\in {C_\mathcal{R}}$ for any $\alpha\in I$ and $O((\lambda, \lambda'))\cap {C_\mathcal{R}}=\varnothing$, gives us the non-degeneration ${\mathcal{P} }(*)\not\to {\mathcal{P} }'$. \smallskip

In Theorem \ref{3-dim Poisson} we presented the classification, up to isomorphism, of the complex Poisson algebras of dimension three. To obtain the irreducible components of this variety, we have studied the primary degenerations and non-degenerations first.

\begin{lemma} \label{th:deg4nlts}
The graph of primary degenerations and non-degenerations for the variety of $3$-dimensional Poisson algebras is given in Figure 1, where the numbers on the right side are the dimensions of the corresponding orbits.
\end{lemma}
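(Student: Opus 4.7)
The plan is to systematically analyze the directed graph on the $20$ isomorphism classes (including the two parametric families $\mathcal{P}_{3,4}^{\ast}$ and $\mathcal{P}_{3,16}^{\ast}$) from Theorem \ref{3-dim Poisson}. First, I would tabulate the invariants listed in Corollary \ref{nondeg} for every algebra, along with $\dim\mathfrak{Der}(\mathcal{P})$. By the dimension formula $\dim O((\mu,\mu'))=9-\dim\mathfrak{Der}(\mathcal{P})$ and the monotonicity observation preceding Lemma \ref{main1}, any candidate degeneration $\mathcal{P}\to\mathcal{P}'$ (with $\mathcal{P}\not\cong\mathcal{P}'$) must strictly increase $\dim\mathfrak{Der}$. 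This immediately rules out most pairs and leaves a comparatively small set of candidate edges; the resulting partial order is what eventually appears in Figure 1.

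Next, for each surviving candidate pair $\mathcal{P}\to\mathcal{P}'$, I would either exhibit a parametric change of basis $g(t)\in\GL(\mathbb{V})$ realising the limit, or produce a separating closed condition to rule the degeneration out. Concretely, using the basis $e_1,e_2,e_3$ in which $\mathcal{P}$ is written in Theorem \ref{3-dim Poisson}, I would try ansätze of the form $g(t)e_i=\sum_j t^{\alpha_{ij}}f_{ij}(t)\,E_j$ with suitable integer exponents, then check that $\lim_{t\to 0}g(t)\ast(\mu,\mu')$ produces precisely the structure constants of $\mathcal{P}'$. For the parametric families $\mathcal{P}_{3,4}^{\ast}\to\mathcal{P}'$ and $\mathcal{P}_{3,16}^{\ast}\to\mathcal{P}'$, I would additionally let the parameter depend on $t$ via a map $f:\mathbb{C}^{\ast}\to I$ as in the paragraph preceding Lemma \ref{main2}. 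Typical useful choices are $\alpha=t^{-1}$ or $\alpha=1-t$, which absorb the parameter into degenerations to rigid algebras such as $\mathcal{P}_{3,2}$ or $\mathcal{P}_{3,13}$.

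For non-degenerations $\mathcal{P}\not\to\mathcal{P}'$, I would build a separating set $\mathcal{R}$ of polynomial identities in the structural constants $c_{ij}^k, c_{ij}'^k$ which cuts out a lower-triangular-stable closed subset $C_{\mathcal{R}}\subseteq\P_3$ containing $(\mu,\mu')$ but disjoint from $O((\lambda,\lambda'))$, and then invoke Lemma \ref{main1} (or Lemma \ref{main2} for families). The invariants in Corollary \ref{nondeg} handle the bulk of cases, since dimensions of the associative square $\mathcal{P}\cdot\mathcal{P}$, the Lie commutator $\{\mathcal{P},\mathcal{P}\}$, the total square $\mathcal{P}^2$ and the three annihilators are all orbit-closure monotone. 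The remaining pairs, where these crude invariants coincide, would be separated by identities custom-built to the Poisson structure, such as identities expressing that $\{x,y\}$ lies in a fixed subspace like $\mathcal{P}\cdot\mathcal{P}$, or that certain products of the two operations (e.g.\ $\{x\cdot y,z\}$ or $\{\{x,y\},z\cdot w\}$) vanish.

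Finally, I would check that every edge and every non-edge claimed in Figure 1 is \emph{primary}: for a claimed degeneration, rule out the existence of an intermediate $\mathcal{P}''$ with $\dim\mathfrak{Der}(\mathcal{P})<\dim\mathfrak{Der}(\mathcal{P}'')<\dim\mathfrak{Der}(\mathcal{P}')$ through which it factors; for a claimed non-degeneration, verify the analogous condition from the definition of primary non-degeneration. Transitivity then recovers the full list of degenerations and non-degenerations from the primary ones, so the picture in Figure 1 is complete. The main obstacle, in my view, is not any single computation but the bookkeeping: organising the matrix of pairs so that the separating identities are chosen minimally and uniformly, and correctly handling the two one-parameter families, where both the limit of $g(t)$ and the limit of $f(t)$ must cooperate to land on the target orbit rather than on some nearby member of the family itself.
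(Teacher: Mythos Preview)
Your proposal is correct and follows essentially the same approach as the paper, whose proof simply refers to tables recording the explicit parametrized bases $g(t)$ for each primary degeneration and the separating sets $\mathcal{R}$ for each primary non-degeneration (Tables~\ref{tab:clas3}, \ref{tab:algdeg4}, \ref{tab:algndeg4}). One small scoping point: the device of letting the family parameter vary via a map $f:\mathbb{C}^*\to I$ actually belongs to the next lemma (Lemma~\ref{th:deg4nltsf}) on the closures of the full family orbits $\overline{\{O(\mu(\alpha),\mu'(\alpha))\}_{\alpha}}$; in the present lemma the families appear in Figure~1 only as nodes for each fixed $\alpha$, and the listed degenerations (e.g.\ $\mathcal{P}_{3,4}^{\alpha\neq1}\to\mathcal{P}_{3,2}$, $\mathcal{P}_{3,16}^{\alpha}\to\mathcal{P}_{3,13}$) are realised by a single $g(t)$ valid uniformly in $\alpha$.
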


\bigskip

\begin{proof}
From Table \ref{tab:clas3} we deduce the dimensions of the orbits for each Poisson algebra of dimension $3$. Every primary degeneration and non-degeneration can be proven using the parametrized changes of basis and the separating sets included in Table \ref{tab:algdeg4} and Table \ref{tab:algndeg4} below, respectively. 
Note that, in Table \ref{tab:algdeg4}, a parametrized change of basis $g$ is defined by $g_{i}(t) : = g(t)(e_i) $ for $1\leq i\leq3$, where $\left\{e_i\right\}_1^3$  is the basis from Theorem \ref{3-dim Poisson}.
\end{proof}

\medskip

At this point, only the description of the closure of the orbit of the parametric families $\mathcal{P}_{3,4}^{*}$ and $\mathcal{P}_{3,16}^{*}$ is missing.

\begin{lemma}\label{th:deg4nltsf}
The description of the closure of the orbit of the parametric families $\mathcal{P}_{3,4}^{*}$ and $\mathcal{P}_{3,16}^{*}$ in the variety ${\P_{3}}$ is given.

\begin{enumerate}
    \item The closure of the orbit of the parametric family $\mathcal{P}_{3,4}^{*}$ contains the closures of the orbits of the Poisson algebras $\mathcal{P}_{3,3}, \mathcal{P}_{3,2}$ and $\mathcal{P}_{3,1}$.
    \item The closure of the orbit of the parametric family $\mathcal{P}_{3,16}^{*}$ contains the closures of the orbits of the Poisson algebras $\mathcal{P}_{3,15}, \mathcal{P}_{3,13}, \mathcal{P}_{3,2}$ and $ \mathcal{P}_{3,1}$.
\end{enumerate}

\end{lemma}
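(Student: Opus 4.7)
The plan is to establish each claimed degeneration by exhibiting an explicit parametrized change of basis $g(t)$ and, where needed, a parametrization $\alpha(t)$ of the family, such that the structure constants of $\mathcal{P}_{3,4}^{\alpha(t)}$ or $\mathcal{P}_{3,16}^{\alpha(t)}$ in the moving basis $g_i(t):=g(t)(e_i)$ converge to those of the target Poisson algebra in the fixed basis. Combining this with transitivity of degeneration and the primary degenerations already established in Lemma~\ref{th:deg4nlts}, it will be enough to prove one (for item (1)) or two (for item (2)) direct family-degenerations per statement.

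For item (1), I would prove the single direct degeneration $\mathcal{P}_{3,4}^{*}\to \mathcal{P}_{3,3}$; then $\mathcal{P}_{3,4}^{*}\to \mathcal{P}_{3,2}$ and $\mathcal{P}_{3,4}^{*}\to \mathcal{P}_{3,1}$ follow by transitivity with the chain $\mathcal{P}_{3,3}\to \mathcal{P}_{3,2}\to \mathcal{P}_{3,1}$ from Lemma~\ref{th:deg4nlts}. The key observation is that $\mathrm{ad}(e_1)$ acts on $\langle e_2,e_3\rangle$ diagonally with eigenvalues $(1,\alpha)$ in $\mathcal{P}_{3,4}^{\alpha}$ and as a single Jordan block at eigenvalue $1$ in $\mathcal{P}_{3,3}$. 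I would realize the standard Jordan-block degeneration by taking $\alpha(t)=1-t$ together with $g_1(t)=e_1,\ g_2(t)=t\,e_2,\ g_3(t)=e_2+e_3$, for which a direct computation gives $\{g_1,g_2\}=g_2$ and $\{g_1,g_3\}=g_2+(1-t)g_3\to g_2+g_3$.

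For item (2), the same strategy applies: I would prove directly the two family-degenerations $\mathcal{P}_{3,16}^{*}\to \mathcal{P}_{3,2}$ and $\mathcal{P}_{3,16}^{*}\to \mathcal{P}_{3,15}$, from which $\mathcal{P}_{3,16}^{*}\to \mathcal{P}_{3,13}$ and $\mathcal{P}_{3,16}^{*}\to \mathcal{P}_{3,1}$ follow by transitivity with $\mathcal{P}_{3,15}\to \mathcal{P}_{3,13}\to \mathcal{P}_{3,1}$ from Lemma~\ref{th:deg4nlts}. For $\mathcal{P}_{3,16}^{*}\to \mathcal{P}_{3,2}$ the simple rescaling $\alpha(t)=1/t$ with $g_1(t)=t\,e_1,\ g_2(t)=e_2,\ g_3(t)=e_3$ works: $g_1\cdot g_2=t\,g_3\to 0$ while $\{g_1,g_2\}=t\alpha\,g_3=g_3$ survives. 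The delicate step is $\mathcal{P}_{3,16}^{*}\to \mathcal{P}_{3,15}$, where one must simultaneously degenerate the underlying commutative associative algebra from $A_3$ (with product $e_1\cdot e_2=e_3$) to $A_2$ (with product $e_1\cdot e_1=e_2$); this is possible because the rank-two quadratic form $xy$ on $\mathbb{C}^2$ is a limit of rank-one forms $x^2$ via the identity $x^2-t^2y^2=(x-ty)(x+ty)$. Concretely, I would take $\alpha(t)=-1/t$ with $g_1(t)=e_1+e_2,\ g_2(t)=2\,e_3,\ g_3(t)=t(e_1-e_2)$: then $g_1\cdot g_1=2e_1e_2=g_2$, $g_3\cdot g_3=-t^2 g_2\to 0$, all other associative products vanish identically, and $\{g_1,g_3\}=-2t\alpha\,e_3=g_2$ survives in the limit, yielding exactly the defining relations $e_1\cdot e_1=e_2$ and $\{e_1,e_3\}=e_2$ of $\mathcal{P}_{3,15}$.

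The main obstacle is the $\mathcal{P}_{3,16}^{*}\to \mathcal{P}_{3,15}$ step, which packages together two competing deformations: the rank drop of the associative quadratic form $xy\rightsquigarrow x^2$ and the divergent rescaling $\alpha(t)\to\infty$ that prevents the Lie bracket from collapsing to zero. The requirement that both the desired associative product and the desired Lie component appear nontrivially, while all other structure constants vanish, is what dictates the precise form of $g(t)$ and $\alpha(t)$; this is the only genuinely nontrivial verification once the primary degenerations of Lemma~\ref{th:deg4nlts} are in place.
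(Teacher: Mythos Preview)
Your degeneration arguments are correct and follow the same strategy as the paper: exhibit an explicit parametrized basis (and parameter function $\alpha(t)$) for each primary family-degeneration, then appeal to transitivity via the degenerations of Lemma~\ref{th:deg4nlts}. Your particular choices of $g(t)$ and $\alpha(t)$ differ from those the paper records in its tables, but the verifications you sketch all go through. (Incidentally, your direct argument for $\mathcal{P}_{3,16}^{*}\to\mathcal{P}_{3,2}$ is redundant: once $\mathcal{P}_{3,16}^{*}\to\mathcal{P}_{3,15}$ is established, Lemma~\ref{th:deg4nlts} already gives $\mathcal{P}_{3,15}\to\mathcal{P}_{3,2}$.)

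Where your proposal stops short of the paper is on the non-degeneration side. The preamble of the lemma promises a \emph{description} of the closure, and the paper's proof accordingly also supplies separating sets (stable under lower-triangular basis changes, in the sense of Lemma~\ref{main2}) establishing the primary family non-degenerations $\mathcal{P}_{3,4}^{*}\not\to\mathcal{P}_{3,13}$ and $\mathcal{P}_{3,16}^{*}\not\to\mathcal{P}_{3,4}^{\alpha},\mathcal{P}_{3,17},\mathcal{P}_{3,19}$. These are what rule out any further orbit closures in $\overline{\{O(\mathcal{P}_{3,4}^{\alpha})\}_\alpha}$ and $\overline{\{O(\mathcal{P}_{3,16}^{\alpha})\}_\alpha}$ beyond the ones you list, and they are needed for the subsequent determination of irreducible components. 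If you read items (1) and (2) literally as bare containment statements your argument suffices, but to match the lemma as the paper uses it you should add those non-degeneration arguments.
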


\begin{proof}
The primary degenerations and non-degenerations that do not follow from the previous results are included in Table \ref{tab:infseriesdeg} and Table \ref{tab:infseriesndeg}, respectively.
\end{proof}

By Lemma \ref{th:deg4nlts} and Lemma \ref{th:deg4nltsf}, we have the following result that give us the geometric classification of ${\P_{3}}$.

\begin{theorem}
The variety of $3$-dimensional Poisson algebras ${\P_{3}}$ has six irreducible components corresponding to the Poisson algebras $\mathcal{P}_{3,5}, \mathcal{P}_{3,7}, \mathcal{P}_{3,18}$ and $\mathcal{P}_{3,20}$ and the families of  Poisson algebras $\mathcal{P}_{3,4}^{*}$ and $\mathcal{P}_{3,16}^{*}$.
The lattice of subsets for the orbit closures is given in Figure 2, where the numbers above are the dimensions of the corresponding orbits.
\end{theorem}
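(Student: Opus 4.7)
The plan is to deduce both assertions directly from the combined information of Lemma \ref{th:deg4nlts} and Lemma \ref{th:deg4nltsf}. First I would recall the standard facts about irreducible components in this setting: since $\GL(\mathbb V)$ is a connected algebraic group, each orbit $O((\mu,\mu'))$ is an irreducible constructible set, hence its Zariski closure $\overline{O((\mu,\mu'))}$ is irreducible in $\P_3$. Similarly, each family closure $\overline{\{O((\mu(\alpha),\mu'(\alpha)))\}_{\alpha\in\mathbb{C}}}$ is irreducible, being the closure of the image of a morphism from the irreducible variety $\mathbb{C}\times\GL(\mathbb V)$. Since Theorem \ref{3-dim Poisson} yields finitely many isomorphism classes outside the two families $\mathcal{P}_{3,4}^{*}$ and $\mathcal{P}_{3,16}^{*}$, the variety $\P_{3}$ is a finite union of such irreducible closed sets, and its irreducible components are precisely the maximal elements of this collection with respect to inclusion.

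Next I would identify the candidates for maximal closures by inspecting the graph of degenerations supplied by Lemma \ref{th:deg4nlts}, together with the specific degenerations coming from the two parametric families in Lemma \ref{th:deg4nltsf}. Using the orbit dimensions computed via the formula $\dim O((\mu,\mu'))=n^2-\dim\mathfrak{Der}(\mathcal P)$ (given in the discussion preceding Lemma \ref{main1}), one filters out, among all the orbit closures and the two family closures, those whose orbits do not appear as degenerations of any other algebra or family. The six candidates are then $\overline{O(\mathcal{P}_{3,5})}$, $\overline{O(\mathcal{P}_{3,7})}$, $\overline{O(\mathcal{P}_{3,18})}$, $\overline{O(\mathcal{P}_{3,20})}$, $\overline{\{O(\mathcal{P}_{3,4}^{\alpha})\}_{\alpha\in\mathbb{C}}}$ and $\overline{\{O(\mathcal{P}_{3,16}^{\alpha})\}_{\alpha\in\mathbb{C}}}$.

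Then I would verify the two properties that characterize irreducible components. Maximality of each of the six candidates is immediate from the primary non-degenerations collected in Lemma \ref{th:deg4nlts} and Lemma \ref{th:deg4nltsf}: any inclusion $\overline{O(\mathcal P)}\subset\overline{O(\mathcal P')}$ would translate into a degeneration $\mathcal P'\to\mathcal P$, which the tabulated non-degenerations rule out pairwise among the six candidates. Covering is obtained by the complementary side of the graph: every remaining Poisson algebra from Theorem \ref{3-dim Poisson} appears as a (primary, hence by transitivity arbitrary) degeneration of one of these six, so its orbit lies inside one of the six closures. Combining both facts with the uniqueness of the irreducible decomposition yields the desired six components, and the Hasse diagram of inclusions that this analysis produces is exactly the lattice displayed in Figure 2, with the orbit dimensions already recorded.

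The main obstacle is bookkeeping rather than conceptual: one must make sure that the graph of primary degenerations together with the two family closures really covers every algebra in Theorem \ref{3-dim Poisson} and that no unexpected inclusion between the six maximal closures has been overlooked; here the key subtlety concerns the two one-parameter families $\mathcal{P}_{3,4}^{*}$ and $\mathcal{P}_{3,16}^{*}$, whose closures can contain rigid orbits (as shown in Lemma \ref{th:deg4nltsf}) but must not be contained in, nor contain, any of the four rigid irreducible components. Once Lemma \ref{th:deg4nlts} and Lemma \ref{th:deg4nltsf} are in hand, this verification is a finite check, and the theorem follows.
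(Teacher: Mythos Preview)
Your proposal is correct and follows the same approach as the paper: the theorem in the paper is stated as an immediate consequence of Lemma \ref{th:deg4nlts} and Lemma \ref{th:deg4nltsf}, with no further argument. Your write-up simply makes explicit the standard reasoning (irreducibility of orbit and family closures, maximality via non-degenerations, covering via degenerations) that the paper leaves implicit.
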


\section{Degenerations between certain types of Poisson algebras}
\label{sec5}

In this section, as a closing of this work, we will study the degenerations and non-degenerations between the Poisson algebras obtained in Theorem \ref{Poisson with null-filiform} and Theorem  \ref{Poisson with filiform}. This class of algebras is not a variety, although, their study can enrich this classification 
since it will help, for example, to understand how the polynomial identities are inherited between them. 
Obviously, Poisson algebras of dimension three constructed on the commutative associative null-filiforms and filiforms are included in our classification of $3$-dimensional Poisson (see table below). Hence, we may assume $n > 3$.

\begin{table}[H]
\centering
\begin{tabular}{|c|c|c|c|c|c|c|}
\hline
3D & $\mathcal{P}_{3,6}$ & $\mathcal{P}_{3,13}$ & 
$\mathcal{P}_{3,14}$ & $\mathcal{P}_{3,15}$ & $\mathcal{P}_{3,16}^0$ & $\mathcal{P}_{3,16}^{\sqrt{-1}}$  \\ \hline
N/F & $\mathcal{P}_{0}^3$ & $\mathcal{P}_{1,1}^3$ & $\mathcal{P}_{1,2}^3$ & $\mathcal{P}_{1,3}^3$ & $\mathcal{P}_{1,4}^3$ & $\mathcal{P}_{1,5}^3$\\ \hline
\end{tabular}    
\caption{Isomorphisms between algebras in Theorem \ref{3-dim Poisson} and those in Theorem \ref{Poisson with null-filiform} and Theorem \ref{Poisson with filiform}.}
\end{table}

In the first place, to determine the dimension of the orbits of each of these algebras, we compute their algebra of derivations.

\begin{lemma}
\label{derfil0}
Let $\varphi$ be a derivation of $\mathcal{P}_0^n$, then
$\varphi(e_i)= \sum_{k=i}^{n} i \lambda_{k-i+1, 1} e_{k}$ for some $\lambda_{k, 1}\in \mathbb{C}$, $k=1,\ldots, n$. Hence, the dimension of the Lie algebra of derivations of $\mathcal{P}_0^n$ is $n$.

\end{lemma}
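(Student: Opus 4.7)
The plan is to exploit that the Poisson bracket of $\mathcal{P}_0^n$ is trivial, so a derivation $\varphi$ must only satisfy the Leibniz rule with respect to the commutative associative product $e_i\cdot e_j=e_{i+j}$ (with the convention $e_k=0$ for $k>n$). Since the algebra is generated by $e_1$ with $e_i=e_1^{\,i}$, the map $\varphi$ is completely determined by $\varphi(e_1)$.

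First I would write $\varphi(e_1)=\sum_{k=1}^n \lambda_{k,1}\,e_k$ with $\lambda_{k,1}\in\mathbb{C}$ arbitrary, and then prove by induction on $i$ that
\[
\varphi(e_i)\;=\;i\sum_{k=i}^n \lambda_{k-i+1,1}\,e_k.
\]
The base case $i=1$ is by definition. For the induction step, applying the Leibniz rule to $e_i=e_1\cdot e_{i-1}$ gives $\varphi(e_i)=\varphi(e_1)\cdot e_{i-1}+e_1\cdot\varphi(e_{i-1})$; expanding both terms using the induction hypothesis, then re-indexing the resulting sums, produces the desired closed form, with the coefficient $i$ arising as $1+(i-1)$.

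Next I would verify that these formulas are genuinely consistent, namely that the Leibniz rule $\varphi(e_i\cdot e_j)=\varphi(e_i)\cdot e_j+e_i\cdot\varphi(e_j)$ holds for all $i,j$, not only for the case $j=i-1$ used in the induction. After substitution and a change of summation index, when $i+j\leq n$ both sides of this identity collapse to $(i+j)\sum_{m=i+j}^n \lambda_{m-i-j+1,1}\,e_m$; when $i+j>n$, both sides vanish because every surviving product $e_k\cdot e_j$ or $e_i\cdot e_k$ carries an index strictly greater than $n$. This is the only place where one needs to be a little careful about indices, but it is a routine check.

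The final step is to count parameters: the scalars $\lambda_{1,1},\ldots,\lambda_{n,1}$ are free and parametrize $\mathfrak{Der}(\mathcal{P}_0^n)$; they are manifestly linearly independent, since distinct choices of $\lambda_{k,1}$ give distinct values of $\varphi(e_1)$. Hence $\dim\mathfrak{Der}(\mathcal{P}_0^n)=n$. I do not foresee any real obstacle here; the whole proof is driven by the fact that $e_1$ generates the algebra, so the main point is only to handle uniformly the truncation $e_k=0$ for $k>n$ in the induction and consistency check.
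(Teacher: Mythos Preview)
Your proposal is correct and follows essentially the same approach as the paper: both exploit that the bracket is trivial and that $e_1$ generates the associative algebra, determine $\varphi(e_i)$ inductively from $\varphi(e_1)$ via the Leibniz rule, note that the converse is a direct verification, and then count parameters. The paper packages the induction slightly more compactly via the identity $\varphi(e_k)=k\,e_{k-1}\varphi(e_1)$, but the argument is the same.
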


\begin{proof}
Clearly, $\varphi(e_1)= \sum_{i=1}^{n} \lambda_{i, 1} e_{i}$ for some $\lambda_{i, 1}\in \mathbb{C}$. Now, for $2\leq i+j\leq n$ we have
$$\varphi(e_{i+j}) = \varphi(e_{i}e_{j}) = \varphi(e_i)e_j + e_{i} \varphi(e_{j}) = (i+j)e_{i+j-1}\varphi(e_{1}).$$
Then, we may fix $k=i+j$ and write
$$\varphi(e_{k}) = k e_{k-1}\varphi(e_{1}) = k e_{k-1} \sum_{i=1}^{n} \lambda_{i, 1} e_{i} = k \sum_{i=1}^{n} \lambda_{i, 1} e_{i + k - 1} = k \sum_{j=k}^{n} \lambda_{j-k+1, 1} e_{j}.$$
The converse is a straightforward verification. Moreover, the Lie algebra of derivations of $\mathcal{P}_0^n$ has dimension $n$. 
\end{proof}

\begin{lemma}
\label{derfil1}
Let $\varphi$ be a derivation of $\mathcal{P}\in \left\{\mathcal{P}_{1,i}^n: i=1,\ldots, 5\right\}$ for $n>3$. 

\begin{itemize}
    \item If $\mathcal{P} = \mathcal{P}_{1,1}^n$, then 
    $$\varphi(e_1) = \sum_{k=1}^{n} \lambda_{k, 1} e_{k}, \quad  
    \varphi(e_i)= \sum_{k=i}^{n-1} i \lambda_{k-i+1, 1} e_{k}, \quad
    \varphi(e_n) = \lambda_{n-1,n} e_{n-1} + \lambda_{n,n} e_{n},$$
    for $2\leq i\leq n -1$, where $\lambda_{k, 1}, \lambda_{n-1,n}, \lambda_{n,n}\in \mathbb{C}$, $k=1,\ldots, n$.
    Moreover, $\textrm{dim}\,\mathfrak{Der}(\mathcal{P}_{1,1}^n) = n+2$ .
    
    \item If $\mathcal{P} = \mathcal{P}_{1,2}^n$, then 
    $$\varphi(e_1) = \sum_{k=2}^{n} \lambda_{k, 1} e_{k}, \quad  
    \varphi(e_i)= \sum_{k=i+1}^{n-1} i \lambda_{k-i+1, 1} e_{k}, \quad
    \varphi(e_n) = \lambda_{n,n} e_{n},$$
    for $2\leq i\leq n -1$, where $\lambda_{k, 1}, \lambda_{n,n}\in \mathbb{C}$, $k=2,\ldots, n$.
    Further, $\textrm{dim}\,\mathfrak{Der}(\mathcal{P}_{1,2}^n) = n$ .
    
    \item If $\mathcal{P} = \mathcal{P}_{1,3}^n$, then 
        $$\varphi(e_1) = \sum_{k=1}^{n} \lambda_{k, 1} e_{k}, \quad  
    \varphi(e_i)= \sum_{k=i}^{n-1} i \lambda_{k-i+1, 1} e_{k}, \quad
    \varphi(e_n) = \lambda_{n-1,n} e_{n-1} + (n-2) \lambda_{1,1} e_{n},$$
    for $2\leq i\leq n -1$, where $\lambda_{k, 1}, \lambda_{n-1,n}\in \mathbb{C}$, $k=1,\ldots, n$.
    Moreover, $\textrm{dim}\,\mathfrak{Der}(\mathcal{P}_{1,3}^n) = n+1$ .
    
    \item If $\mathcal{P} = \mathcal{P}_{1,4}^n$, then 
    $$\varphi(e_1) = \sum_{k=1}^{n} \lambda_{k, 1} e_{k}, \quad  
    \varphi(e_i)= \sum_{k=i}^{n-1} i \lambda_{k-i+1, 1} e_{k}, \quad
    \varphi(e_n) = - \lambda_{n,1} e_{n-1} + \lambda_{n-1,n} e_{n-1} + \frac{n-1}{2} \lambda_{1,1} e_{n},$$
    for $2\leq i\leq n -1$, where $\lambda_{k, 1}, \lambda_{n-1,n}, \lambda_{n,n}\in \mathbb{C}$, $k=1,\ldots, n$.
    Moreover, $\textrm{dim}\,\mathfrak{Der}(\mathcal{P}_{1,4}^n) = n+1$ .
    
    \item If $\mathcal{P} = \mathcal{P}_{1,5}^n$, then 
        $$\varphi(e_1) = \sum_{k=2}^{n} \lambda_{k, 1} e_{k}, \quad  
    \varphi(e_i)= \sum_{k=i+1}^{n-1} i \lambda_{k-i+1, 1} e_{k}, \quad
    \varphi(e_n) = -\lambda_{n,1} e_{n-2} + \lambda_{n-1,n} e_{n-1},$$
    for $2\leq i\leq n -1$, where $\lambda_{k, 1}, \lambda_{n-1,n}\in \mathbb{C}$, $k=2,\ldots, n$.
    Further, $\textrm{dim}\,\mathfrak{Der}(\mathcal{P}_{1,5}^n) = n$ .

\end{itemize}

\end{lemma}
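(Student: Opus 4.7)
The strategy is to adapt the proof of Lemma \ref{derfil0} by separating the computation into a common ``associative skeleton'' step, which uses only the filiform-type relations $e_i\cdot e_j=e_{i+j}$ (shared by all five algebras), and case-by-case constraints coming from the distinguishing structural data: the direct summand $e_n$ in $\mathcal{P}_{1,1}^n$, the extra associative relation $e_n\cdot e_n=e_{n-1}$ in $\mathcal{P}_{1,4}^n$ and $\mathcal{P}_{1,5}^n$, and the Lie brackets $\{e_1,e_n\}=e_n$ in $\mathcal{P}_{1,2}^n$ or $\{e_1,e_n\}=e_{n-1}$ in $\mathcal{P}_{1,3}^n$ and $\mathcal{P}_{1,5}^n$.

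First I write $\varphi(e_1)=\sum_{k=1}^n\lambda_{k,1}e_k$ and $\varphi(e_n)=\sum_{k=1}^n\lambda_{k,n}e_k$. Using $\varphi(e_i)=\varphi(e_{i-1}\cdot e_1)=\varphi(e_{i-1})\cdot e_1+e_{i-1}\cdot\varphi(e_1)$ together with the fact that in every $\mathcal{P}_{1,s}^n$ the product $e_j\cdot e_k$ vanishes whenever $j+k>n-1$ and $j,k<n$, an induction on $i$ identical to that of Lemma \ref{derfil0} yields
\[
\varphi(e_i)=i\sum_{k=i}^{n-1}\lambda_{k-i+1,1}\,e_k\qquad (2\leq i\leq n-1);
\]
in particular $\varphi(e_{n-1})=(n-1)\lambda_{1,1}e_{n-1}$. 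This is the common skeleton from which each of the five stated formulas will follow once the distinguishing relations pin down which $\lambda_{k,1}$ and $\lambda_{k,n}$ are free.

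The remaining work is to extract the constraints in each case. For $\mathcal{P}_{1,1}^n$, applying $\varphi$ to $e_1\cdot e_n=0$ and noting that $e_k\cdot e_n=0$ for every $k$ reduces the identity to $e_1\cdot\varphi(e_n)=0$, killing $\lambda_{k,n}$ for $1\leq k\leq n-2$. For $\mathcal{P}_{1,2}^n$, one additionally uses $\{e_1,e_n\}=e_n$; after checking inductively via Leibniz that $\{e_k,e_n\}=0$ for $2\leq k\leq n-1$, the constraint becomes $\varphi(e_n)=(\lambda_{1,1}+\lambda_{n,n})e_n$, which when compared with the form obtained from the associative part forces $\lambda_{n-1,n}=0$ and $\lambda_{1,1}=0$. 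For $\mathcal{P}_{1,3}^n$, the bracket $\{e_1,e_n\}=e_{n-1}$ instead gives $\varphi(e_{n-1})=(\lambda_{1,1}+\lambda_{n,n})e_{n-1}$, and comparing with $(n-1)\lambda_{1,1}e_{n-1}$ yields $\lambda_{n,n}=(n-2)\lambda_{1,1}$. For $\mathcal{P}_{1,4}^n$, the relation $e_n\cdot e_n=e_{n-1}$ contributes two constraints: applying $\varphi$ gives $2\lambda_{n,n}e_{n-1}=(n-1)\lambda_{1,1}e_{n-1}$, hence $\lambda_{n,n}=\tfrac{n-1}{2}\lambda_{1,1}$, while $\varphi(e_1\cdot e_n)=0$ now picks up the extra term $\lambda_{n,1}e_{n-1}$, forcing $\lambda_{n-2,n}=-\lambda_{n,1}$ with $\lambda_{k,n}=0$ still for $k\leq n-3$. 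Finally, for $\mathcal{P}_{1,5}^n$, combining the two preceding cases yields both $\lambda_{n,n}=(n-2)\lambda_{1,1}$ and $\lambda_{n,n}=\tfrac{n-1}{2}\lambda_{1,1}$, and for $n>3$ this forces $\lambda_{1,1}=\lambda_{n,n}=0$.

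A parameter count then finishes the proof: $\varphi(e_1)$ contributes $n$ or $n-1$ free coefficients (according as $\lambda_{1,1}$ is free or forced to vanish), $\varphi(e_n)$ contributes the remaining independent coefficients, and $\varphi(e_i)$ for $2\leq i\leq n-1$ is completely determined; these add up to $n+2,\ n,\ n+1,\ n+1,\ n$, respectively. The main obstacle should be the combinatorial bookkeeping in case $\mathcal{P}_{1,5}^n$, where the Lie bracket and the associative relation $e_n\cdot e_n=e_{n-1}$ both act on $e_{n-1}$; in particular one must verify inductively that the auxiliary brackets $\{e_k,e_n\}$ for $2\leq k\leq n-1$ still vanish despite the extra associative term, so that the Lie-bracket computation in $\mathcal{P}_{1,5}^n$ produces exactly the same linear equation as in $\mathcal{P}_{1,3}^n$. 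The converse, namely that every $\varphi$ of the stated form is actually a derivation, is a routine direct check against the defining relations.
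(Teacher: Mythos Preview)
Your proposal is correct and follows essentially the same approach as the paper. The paper in fact only writes out the case $\mathcal{P}_{1,5}^n$ in detail---deriving the skeleton formula for $\varphi(e_i)$ from the associative relations as in Lemma~\ref{derfil0}, then using $e_n\cdot e_n=e_{n-1}$ to get $\lambda_{n,n}=\tfrac{n-1}{2}\lambda_{1,1}$, the bracket $\{e_1,e_n\}=e_{n-1}$ to force $\lambda_{1,1}=0$, and finally $e_i\cdot e_n=0$ to pin down $\varphi(e_n)$---and dismisses the remaining four cases as similar; your write-up simply makes those ``similar'' cases explicit. One cosmetic remark: the vanishing of $\{e_k,e_n\}$ for $2\le k\le n-1$ is part of the multiplication table of each $\mathcal{P}_{1,s}^n$ rather than something to be derived via Leibniz, so that verification step is unnecessary (though harmless).
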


\begin{proof}
 We are proving the case $\mathcal{P} = \mathcal{P}_{1,5}^n$, the rest of the cases can be argued similarly. It is clear that $\varphi(e_1)= \sum_{i=1}^{n} \lambda_{i, 1} e_{i}$ for some $\lambda_{i, 1}\in \mathbb{C}$. In the associative part, for $2\leq k \leq n-1$, we have (see Lemma \ref{derfil0})
 $$\varphi(e_{k}) = k e_{k-1} \sum_{i=1}^{n} \lambda_{i, 1} e_{i} = k \sum_{i=1}^{n-1} \lambda_{i, 1} e_{i + k - 1} = k \sum_{j=k}^{n-1} \lambda_{j-k+1, 1} e_{j}.$$
 Note that $\varphi(e_{n} e_{n}) = 2 \varphi(e_n)e_n$ implies that $\lambda_{n,n} = \frac{(n-1)\lambda_{1,1}}{2}$, where $\varphi(e_n) = \sum_{i=1}^{n} \lambda_{i, n} e_{i}$. For the Lie bracket part, we have 
 $$(n-1)\lambda_{1,1}e_{n-1} = \varphi(e_{n-1}) = \varphi(\left\{ e_{1},e_{n}\right\}) = \left\{\sum_{i=1}^{n} \lambda_{i, 1} e_{i}, e_n\right\} + \left\{ e_1, \varphi(e_n)\right\} = \lambda_{1, 1} e_{n-1} + \frac{(n-1)\lambda_{1,1}}{2} e_{n-1}.$$ 
 Hence, we have $\lambda_{1, 1} = \lambda_{n,n} = 0$.
 
Now, for $2\leq i \leq n$, we have $0 = \varphi(e_{i}e_n) = \varphi(e_{i})e_{n} + e_{i}\varphi(e_{n}) = e_{i}\varphi(e_{n})$. In particular, we obtain $$e_{2}\varphi(e_{n}) = e_{2}\sum_{i=1}^{n-1} \lambda_{i, n} e_{i} = \sum_{i=1}^{n-3} \lambda_{i, n} e_{i+2}= 0, $$
so $\lambda_{i, n} = 0$ for $1\leq i\leq n-3$ and $\varphi(e_n) = \lambda_{n-2,n} e_{n-2} + \lambda_{n-1,n} e_{n-1}$.
Moreover, $0 = \varphi(e_{1}e_n) = \varphi(e_{1})e_{n} + e_{1}\varphi(e_{n}) = \lambda_{n,1} e_{n-1} + \lambda_{n-2,n} e_{n-1}$ implies $\lambda_{n-2,n}=-\lambda_{n,1}$. The converse is a direct verification.  
\end{proof}

\medskip

Now, we study the degenerations and non-degenerations between the algebras under consideration.

\begin{theorem}

The primary degenerations and non-degenerations in the class of Poisson algebras constructed on a  null-filiform or filiform algebra of dimension $n>3$ is given in Figure 3. The numbers on the right side are the dimensions of the orbits.

\end{theorem}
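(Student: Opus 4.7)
The plan is to apply the technique of Lemma \ref{th:deg4nlts} to the six families $\mathcal{P}_{0}^n$ and $\mathcal{P}_{1,i}^n$ ($1\leq i\leq 5$) in fixed dimension $n>3$. First, from Lemma \ref{derfil0} and Lemma \ref{derfil1} I would read off the dimensions of the orbits, namely $n^2-n$ for $\mathcal{P}_{0}^n, \mathcal{P}_{1,2}^n, \mathcal{P}_{1,5}^n$; $n^2-n-1$ for $\mathcal{P}_{1,3}^n, \mathcal{P}_{1,4}^n$; and $n^2-n-2$ for $\mathcal{P}_{1,1}^n$. Since a proper degeneration strictly decreases the stabilizer dimension, every arrow in Figure 3 is forced to run from a higher layer to a lower one, which already cuts the number of candidate arrows to a short list.

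Next I would prune the surviving candidates by means of Corollary \ref{nondeg}. The algebras $\mathcal{P}_{0}^n, \mathcal{P}_{1,1}^n, \mathcal{P}_{1,4}^n$ have trivial Lie bracket whereas $\mathcal{P}_{1,2}^n, \mathcal{P}_{1,3}^n, \mathcal{P}_{1,5}^n$ do not, so no arrow may run from the former set to the latter. Likewise, the dimensions $\mathrm{dim}\,\mathrm{Ann}(\mathcal{P},\cdot)$ take the values $1$ for $\mathcal{P}_{0}^n, \mathcal{P}_{1,4}^n, \mathcal{P}_{1,5}^n$ and $2$ for $\mathcal{P}_{1,1}^n, \mathcal{P}_{1,2}^n, \mathcal{P}_{1,3}^n$, blocking further arrows, and $\mathrm{dim}\, \mathcal{P}_0^n\cdot\mathcal{P}_0^n = n-1$ while the corresponding dimension for every filiform Poisson algebra is at most $n-2$, which forces $\mathcal{P}_0^n$ to sit above the filiform layer.

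For each candidate primary degeneration I would exhibit an explicit parametrized family $g(t)\in\GL(\mathcal{P})$. When possible I would use a diagonal ansatz $g(t)e_i = t^{\alpha_i}e_i$, which succeeds whenever the weights satisfy $\alpha_{i+j}=\alpha_i+\alpha_j$ on the surviving products and $\alpha_k>\alpha_i+\alpha_j$ on the structure constants that should vanish. For instance, the weights $\alpha_i = i$ for $i<n$ and $\alpha_n = (n-1)/2$ witness $\mathcal{P}_{1,5}^n \to \mathcal{P}_{1,4}^n$, since $e_n\cdot e_n = e_{n-1}$ is preserved while $\{e_1,e_n\} = e_{n-1}$ is rescaled by $t^{(n-3)/2}\to 0$, using exactly the hypothesis $n>3$. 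When the target requires a structure constant in a position that does not appear in the source (for instance $\{e_1,e_n\} = e_{n-1}$ in $\mathcal{P}_{1,3}^n$ versus $\{e_1,e_n\} = e_n$ in $\mathcal{P}_{1,2}^n$), I would use a non-diagonal correction on $e_n$, such as $g(t)e_n = e_{n-1} + t e_n$ combined with $g(t)e_i = t^{-i}e_i$ for $i<n$, and verify that this both kills the unwanted structure constant and produces the desired one in the limit, without disturbing the remaining associative products.

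For the candidate non-degenerations that survive the two pruning passes, I would produce a separating set in the sense of Section \ref{sec4}: a finite system of polynomial identities in the structure constants, stable under the action of the lower triangular subgroup of $\GL(\mathcal{P})$, that is satisfied by the source but violated by every representative of the target's orbit, so that Lemma \ref{main1} delivers the non-degeneration. The main obstacle I anticipate is twofold: on the degeneration side, engineering the non-diagonal flows that must simultaneously suppress a nonzero associative product on $e_n$ and preserve a bracket term at the same basis position (as arises between the $\mathcal{P}_{1,5}^n, \mathcal{P}_{1,3}^n, \mathcal{P}_{1,4}^n$ triangle), and on the non-degeneration side, finding an identity sharp enough to separate $\mathcal{P}_{1,3}^n$ from $\mathcal{P}_{1,4}^n$, since these have equal orbit dimension and differ only by whether the single relation $e_{n-1}=$ is realised by $\{e_1,e_n\}$ or by $e_n\cdot e_n$.
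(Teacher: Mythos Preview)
Your overall approach matches the paper's: read off orbit dimensions from Lemmas~\ref{derfil0} and~\ref{derfil1}, prune impossible arrows via Corollary~\ref{nondeg}, and exhibit explicit one-parameter families for the surviving degenerations. The two invariants you single out (vanishing of the Lie bracket and $\dim\mathrm{Ann}(\mathcal{P},\cdot)$) are exactly the ones the paper uses, and in fact they already dispose of \emph{all} primary non-degenerations that are not forced by orbit dimension alone --- namely $\mathcal{P}_{0}^{n}\not\to\mathcal{P}_{1,3}^{n}$ and $\mathcal{P}_{1,2}^{n}\not\to\mathcal{P}_{1,4}^{n}$. No separating sets in the sense of Lemma~\ref{main1} are needed here at all.

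Your anticipated obstacles are therefore misplaced. There is nothing to separate between $\mathcal{P}_{1,3}^{n}$ and $\mathcal{P}_{1,4}^{n}$: they have equal orbit dimension, so neither can properly degenerate to the other and the question simply does not arise. What you \emph{do} overlook is the one genuinely delicate degeneration, $\mathcal{P}_{0}^{n}\to\mathcal{P}_{1,4}^{n}$, which survives both of your pruning passes. Your diagonal ansatz cannot produce it, since no diagonal rescaling will create a nonzero product $e_{n}\cdot e_{n}$ out of the null-filiform table in which $e_{n}\cdot e_{n}=0$; one must send the new $e_{n}$ to a carefully weighted combination $\sum_{k=2}^{n-1}\gamma_{k}(t)e_{k}+\delta(t)e_{n}$ so that its square limits to $e_{n-1}$ while all cross-products $e_{i}\cdot e_{n}$ ($i<n$) vanish in the limit. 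Engineering these coefficients is the real work in the proof, and the paper writes the map down explicitly.
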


\begin{proof}
The assertions that do not follow by the dimensions of the orbits, which are obtained from Lemma \ref{derfil0} and Lemma \ref{derfil1}, are proved below.

\begin{itemize}

    \item $\mathcal{P}_{1,2}^{n} \to \mathcal{P}_{1,3}^{n}$. Consider the action of the linear map 
    $$g_{i}(t)= t^{-i} e_i, \quad g_{n}(t)= t^{-1}e_{n-1} + e_n,$$
    for $1\leq i\leq n-1$, on $\mathcal{P}_{1,2}^{n}$ to obtain 
    $$\left\{
    \begin{array}{c}
    e_{i}\cdot e_{j}=e_{i+j}, \\ 
    \left\{ e_{1},e_{n}\right\} =e_{n-1} + t e_{n},%
    \end{array}%
    \right. $$
    where $2\leq i+j\leq n-1$. Applying the limit, we have $\mathcal{P}_{1,3}^{n}$.
    
    \item $\mathcal{P}_{1,3}^{n} \to \mathcal{P}_{1,1}^{n}$ and $\mathcal{P}_{1,4}^{n} \to \mathcal{P}_{1,1}^{n}$. Consider the map
    $g_{i}(t)= e_i, \quad g_{n}(t)= t^{-1} e_n,$
    for $1\leq i\leq n-1$. 
    
    The actions $g*\mathcal{P}_{1,3}^{n}$ and $g*\mathcal{P}_{1,4}^{n}$ gives us, respectively, the algebras
    $$\left\{
    \begin{array}{c}
    e_{i}\cdot e_{j}=e_{i+j}, \\ 
    \left\{ e_{1},e_{n}\right\} = t e_{n-1},%
    \end{array}%
    \right. \quad \quad  \quad \quad  \quad \quad 
    e_{i}\cdot e_{j}=e_{i+j}, \quad e_{n}\cdot e_{n} = t^2 e_{n-1}, $$
    where $2\leq i+j\leq n-1$.     Clearly, by taking the limit, we obtain $\mathcal{P}_{1,1}^{n}$ in both cases.

    \item $\mathcal{P}_{1,5}^{n} \to \mathcal{P}_{1,3}^{n}$. Choose, for $1\leq i\leq n-1$, the map 
    $$g_{i}(t)= t^{-i}e_i, \quad g_{n}(t)= t^{-n+2} e_n.$$
    Hence, by applying the action we have
    $$\left\{ 
    \begin{array}{c}
    e_{i}\cdot e_{j}=e_{i+j}, e_{n}\cdot e_{n} = t^{n-3} e_{n-1} \\ 
    \left\{ e_{1},e_{n}\right\} =e_{n-1}.%
    \end{array}%
    \right. $$
    where $2\leq i+j\leq n-1$. Thus, the limit gives us
    $\mathcal{P}_{1,3}^{n}$.
    
    \item $\mathcal{P}_{1,5}^{n} \to \mathcal{P}_{1,4}^{n}$. Consider the map 
    $$g_{i}(t)= t^{2i}e_i, \quad g_{n}(t)= t^{n-1} e_n,$$ 
    for $1\leq i\leq n-1$. Then, the action $g*\mathcal{P}_{1,5}^{n}$ gives us the algebra
    $$\left\{ 
    \begin{array}{c}
    e_{i}\cdot e_{j}=e_{i+j}, e_{n}\cdot e_{n} = e_{n-1} \\ 
    \left\{ e_{1},e_{n}\right\} = t^{n-3} e_{n-1},%
    \end{array}%
    \right. $$
    where $2\leq i+j\leq n-1$. By the limit we obtain
    $\mathcal{P}_{1,4}^{n}$.
   
    \item  $\mathcal{P}_{0}^{n} \to \mathcal{P}_{1,4}^{n}$. Consider, for $1\leq i\leq n-1$, the parametrized isomorphism given by
    $$g_{i}(t)= (-1)^{\frac{-i}{n-1}}t^{-i}e_i, \quad g_{n}(t)= -\sum_{k=2}^{n-1} (-1)^{\frac{-k}{n-1}}t^{n-2k}e_{k}  + t e_{n}.$$
    and its inverse given by
    $$g^{-1}_{i}(t)= (-1)^{\frac{i}{n-1}} t^{i}e_i, \quad g^{-1}_{n}(t)= \sum_{k=2}^{n} t^{n-1-k} e_k.$$
    The verification of  $\lim\limits_{t\to 0}g(t)*\mathcal{P}_{0}^{n} = \mathcal{P}_{1,4}^{n}$ can be studied analogously.

    \item $\mathcal{P}_{0}^{n} \not \to \mathcal{P}_{1,3}^{n}$. This is clear, since $(\mathcal{P}_{0}^{n}, \left\{-,-\right\})$ is the zero algebra and  $(\mathcal{P}_{1,3}^{n}, \left\{-,-\right\})$ is not.

    \item $\mathcal{P}_{1,2}^{n} \not \to \mathcal{P}_{1,4}^{n}$. 
    Observe that $\textrm{Ann}(\mathcal{P}_{1,2}^n, \cdot) = \langle e_{n-1}, e_{n}\rangle$ and  $\textrm{Ann}(\mathcal{P}_{1,4}^n, \cdot) = \langle e_{n-1}\rangle$, then it follows by Corollary \ref{nondeg}.
\end{itemize}
It follows by the dimension of the orbits that every degeneration in Figure 3 is a primary degeneration.
\end{proof}

\newpage

\begin{center}
	
	\begin{tikzpicture}[->,>=stealth,shorten >=0.05cm,auto,node distance=1.3cm,
	thick,
	main node/.style={rectangle,draw,fill=black!30,rounded corners=1.5ex,font=\sffamily \scriptsize \bfseries },
	rigid node/.style={rectangle,draw,fill=black!30,rounded corners=1.5ex,font=\sffamily \scriptsize \bfseries }, 
	poisson node/.style={rectangle,draw,fill=black!35,rounded corners=0ex,font=\sffamily \scriptsize \bfseries },
	ac node/.style={rectangle,draw,fill=white!20,rounded corners=0ex,font=\sffamily \scriptsize \bfseries },
	lie node/.style={rectangle,draw,fill=black!10,rounded corners=0ex,font=\sffamily \scriptsize \bfseries },
	style={draw,font=\sffamily \scriptsize \bfseries }]

	\node (13) at (0.5,12) {$9$};
	\node (12) at (0.5,11) {$8$};
	\node (11) at (0.5,9) {$7$};
	\node (10) at (0.5,7) {$6$};
	\node (9)  at (0.5,5) {$5$};
	\node (8)  at (0.5,3) {$4$};
	\node (6)  at (0.5,2) {$3$};
	\node (0)  at (0.5,0) {$0$};

    
    \node[poisson node] (c31) at (-8,0) {$\mathcal{P}_{3,1}$};
    
    \node[lie node] (c32) at (-12,2) {$\mathcal{P}_{3,2}$};
    \node[lie node] (c34a1) at (-10,2) {$\mathcal{P}_{3,4}^{1}$};
    
    \node[ac node] (c313) at (-6,3) {$\mathcal{P}_{3,13}$};
    
    \node[lie node] (c33) at (-14,5) {$\mathcal{P}_{3,3}$};
    \node[lie node] (c34a) at (-12,5) {$\mathcal{P}_{3,4}^{\alpha\neq1}$};
    \node[poisson node] (c315) at (-10,5) {$\mathcal{P}_{3,15}$};
    \node[poisson node] (c316a) at (-8,5) {$\mathcal{P}_{3,16}^{\beta\neq0}$};
    \node[ac node] (c316a0) at (-4,5) {$\mathcal{P}_{3,16}^{0}$};    
    \node[ac node] (c317) at (-6,5) {$\mathcal{P}_{3,17}$};
    \node[ac node] (c319) at (-2,5) {$\mathcal{P}_{3,19}$};
    
    \node[lie node] (c35) at (-13,7) {$\mathcal{P}_{3,5}$};
    \node[ac node] (c36) at (-4,7) {$\mathcal{P}_{3,6}$};
    \node[poisson node] (c314) at (-9,7) {$\mathcal{P}_{3,14}$};
    
    \node[ac node] (c39) at (-6,9) {$\mathcal{P}_{3,9}$};
    \node[ac node] (c311) at (-4,9) {$\mathcal{P}_{3,11}$};
    \node[ac node] (c312) at (-2,9) {$\mathcal{P}_{3,12}$};
    \node[poisson node] (c318) at (-10,9) {$\mathcal{P}_{3,18}$};
    \node[poisson node] (c320) at (-8,9) {$\mathcal{P}_{3,20}$};
    
    \node[ac node] (c38) at (-6,11) {$\mathcal{P}_{3,8}$};
    \node[ac node] (c310) at (-2,11) {$\mathcal{P}_{3,10}$};
    
    \node[ac node] (c37) at (-4,12) {$\mathcal{P}_{3,7}$};
 
	\path[every node/.style={font=\sffamily\small}]


    (c318) edge   node {} (c314)
    (c318) edge   node {} (c317)
    (c320) edge   node {} (c314)
    (c320) edge   node {} (c319)

    (c314) edge   node {} (c315)
    (c314) edge   node[above=5, right=-8, fill=white]{\tiny $\alpha= 0$ } (c34a)

    (c37) edge   node {} (c38)
    (c37) edge   node {} (c310)
    
    (c38) edge   node {} (c312)
    (c38) edge   node {} (c311)
    (c38) edge   node {} (c39)
    (c310) edge   node {} (c311)
    (c310) edge   node {} (c312)
    
    (c312) edge   node {} (c36)
    (c312) edge   node {} (c319)
    (c311) edge   node {} (c36)
    (c39) edge   node {} (c317)
    (c39) edge   node {} (c36)
    
    (c317) edge   node {} (c313)
    (c319) edge   node {} (c313)
    
    (c35) edge   node[above=5, right=-16, fill=white]{\tiny $\alpha= -1$ } (c34a)
    (c34a) edge   node {} (c32)
    (c33) edge   node {} (c32)
    (c33) edge   node {} (c34a1)
    (c36) edge   node {} (c316a0)
    (c316a) edge   node {} (c313)
    (c316a0) edge   node {} (c313)
    (c315) edge   node {} (c313)
    (c315) edge   node {} (c32)
    
    (c32) edge   node {} (c31)
    (c34a1) edge   node {} (c31)
    (c313) edge   node {} (c31);
    
	\end{tikzpicture}
{\tiny \begin{itemize}	
Legend:
\begin{itemize}
    \item[--] Squared nodes: Poisson algebras.
    \item[--] Squared white nodes: embedded commutative associative algebras $(\mu, 0)$.
    \item[--] Squared grey nodes: embedded Lie algebras $(0, \mu)$.
\end{itemize}
\end{itemize}}

\label{fig1}
{Figure 1.}  Graph of primary degenerations and non-degenerations.	
\end{center}

\begin{center}\label{lattice}
{\tiny
\begin{tikzpicture}[-,draw=black!100,node distance=0.73cm,
                   thick,main node/.style={rectangle, fill=gray!30,font=\sffamily \scriptsize \bfseries },style={draw,font=\sffamily \scriptsize \bfseries }]

\node (8) {$9$};
\node (8r) [right  of=8] {};
\node (8rr) [right  of=8r] {};
\node (8rrr) [right  of=8rr] {};
\node (7) [right  of=8rr]      {$8$};
\node (7r) [right  of=7] {};
\node (7rr) [right  of=7r] {};
\node (7rrr) [right  of=7rr] {};
\node (6) [right  of=7rr]      {$7$};
\node (6r) [right  of=6] {};
\node (6rr) [right  of=6r] {};
\node (6rrr) [right  of=6rr] {};
\node (5) [right  of=6rr]      {$6$};
\node (5r) [right  of=5] {};
\node (5rr) [right  of=5r] {};
\node (5rrr) [right  of=5rr] {};
\node (4) [right  of=5rrr]      {$5$};
\node (4r) [right  of=4] {};
\node (4rr) [right  of=4r] {};
\node (4rrr) [right  of=4rr] {};
\node (3) [right  of=4rrr]      {$4$};
\node (3r) [right  of=3] {};
\node (3rr) [right  of=3r] {};
\node (2) [right  of=3rr]      {$3$};
\node (2r) [right  of=2] {};
\node (2rr) [right  of=2r] {};
\node (2rrr) [right  of=2rr] {};
\node (1) [right  of=2rr]      {$0$};
\node (1r) [right  of=1] {};
\node (1rr) [right  of=1r] {};
\node (1rrr) [right  of=1rr] {};

    \node (13p1)  [below of =8]     	{};
	\node (13p2)  [below of =13p1]     	{};
	\node (13p3)  [below of =13p2]     	{};
    \node (13p4)  [below of =13p3]     	{};
    \node (13p5)  [below of =13p4]     	{};
    \node (13p6)  [below of =13p5]     	{};
    \node (13p7)  [below of =13p6]     	{};
    \node (13p8)  [below of =13p7]     	{};
    \node (13p9)  [below of =13p8]     	{};
    
	\node[main node] (c37)  [below of =13p1]     	{$\overline{O\big(\mathcal{P}_{3,7}\big)}$};	
    
    \node (12p1)  [below of =7]     	{};
	\node (12p2)  [below of =12p1]     	{};
	\node (12p3)  [below of =12p2]     	{};
    \node (12p4)  [below of =12p3]     	{};
	\node (12p5)  [below of =12p4]     	{};
	\node (12p6)  [below of =12p5]     	{};
	\node (12p7)  [below of =12p6]     	{};
	\node (12p8)  [below of =12p7]     	{};
	\node (12p9)  [below of =12p8]     	{};

	\node[main node] (c310)  [below of =7]     	{$\overline{O\big(\mathcal{P}_{3,10}\big)}$};	
	\node[main node] (c38)  [below of =12p2]     	{$\overline{O\big(\mathcal{P}_{3,8}\big)}$};

	\node (11p1)  [below of =6]     	{};
	\node (11p2)  [below of =11p1]     	{};
	\node (11p3)  [below of =11p2]     	{};
	\node (11p4)  [below of =11p3]     	{};
	\node (11p5)  [below of =11p4]     	{};
	\node (11p6)  [below of =11p5]     	{};
	\node (11p7)  [below of =11p6]     	{};
	\node (11p8)  [below of =11p7]     	{};
	\node (11p9)  [below of =11p8]     	{};
	
	\node[main node] (c312)  [below of =6]     	{$\overline{O\big(\mathcal{P}_{3,12}\big)}$};	
	\node[main node] (c311)  [below of =11p1]     	{$\overline{O\big(\mathcal{P}_{3,11}\big)}$};	
	\node[main node] (c39)  [below of =11p2]     	{$\overline{O\big(\mathcal{P}_{3,9}\big)}$};
	\node[main node] (c320)  [below of =11p5]     	{$\overline{O\big(\mathcal{P}_{3,20}\big)}$};
	\node[main node] (c318)  [below of =11p7]     	{$\overline{O\big(\mathcal{P}_{3,18}\big)}$};

	\node (10p1)  [below of =5]     	{};
	\node (10p2)  [below of =10p1]     	{};
	\node (10p3)  [below of =10p2]     	{};
	\node (10p4)  [below of =10p3]     	{};
	\node (10p5)  [below of =10p4]     	{};
	\node (10p6)  [below of =10p5]     	{};
	\node (10p7)  [below of =10p6]     	{};
	\node (10p8)  [below of =10p7]     	{};
	\node (10p9)  [below of =10p8]     	{};

	\node[main node] (c36)  [below of =10p1]     	{$\overline{O\big(\mathcal{P}_{3,6}\big)}$};	
	\node[main node] (c316*)  [below of =10p4]     	{$\overline{O\big(\mathcal{P}_{3,16}^{*}\big)}$};	
	\node[main node] (c314)  [below of =10p7]     	{$\overline{O\big(\mathcal{P}_{3,14}\big)}$};	
	\node[main node] (c34*)  [below of =10p6]     	{$\overline{O\big(\mathcal{P}_{3,4}^{*}\big)}$};	
	\node[main node] (c35)  [below of =10p8]     	{$\overline{O\big(\mathcal{P}_{3,5}\big)}$};
	
	\node (9p1)  [below of =4]     	{};
	\node (9p2)  [below of =9p1]     	{};
	\node (9p3)  [below of =9p2]     	{};
	\node (9p4)  [below of =9p3]     	{};
	\node (9p5)  [below of =9p4]     	{};
	\node (9p6)  [below of =9p5]     	{};
	\node (9p7)  [below of =9p6]     	{};
	\node (9p8)  [below of =9p7]     	{};
	\node (9p9)  [below of =9p8]     	{};
	
	\node[main node] (c319)  [below of =4]     	{$\overline{O\big(\mathcal{P}_{3,19}\big)}$};	
	\node[main node] (c316a0)  [below of =9p1]     	{$\overline{O\big(\mathcal{P}_{3,16}^{0}\big)}$};
	\node[main node] (c317)  [below of =9p2]     	{$\overline{O\big(\mathcal{P}_{3,17}\big)}$};	
	\node[main node] (c315)  [below of =9p5]     	{$\overline{O\big(\mathcal{P}_{3,15}\big)}$};
	\node[main node] (c34a0)  [below of =9p6]     	{$\overline{O\big(\mathcal{P}_{3,4}^{0}\big)}$};	
	\node[main node] (c34a-1)  [below of =9p7]     	{$\overline{O\big(\mathcal{P}_{3,4}^{-1}\big)}$};
	\node[main node] (c33)  [below of =9p8]     	{$\overline{O\big(\mathcal{P}_{3,3}\big)}$};

	\node (8p1)  [below of =3]     	{};
	\node (8p2)  [below of =8p1]     	{};
	\node (8p3)  [below of =8p2]     	{};
	\node (8p4)  [below of =8p3]     	{};
	\node (8p5)  [below of =8p4]     	{};
	\node (8p6)  [below of =8p5]     	{};
	\node (8p7)  [below of =8p6]     	{};
	\node (8p8)  [below of =8p7]     	{};
	\node (8p9)  [below of =8p8]     	{};
		
	\node[main node] (c313)  [below of =8p2]     	{$\overline{O\big(\mathcal{P}_{3,13}\big)}$};
	
	\node (5p1)  [below of =2]     	{};
	\node (5p2)  [below of =5p1]     	{};
	\node (5p3)  [below of =5p2]     	{};
	\node (5p4)  [below of =5p3]     	{};
	\node (5p5)  [below of =5p4]     	{};
	\node (5p6)  [below of =5p5]     	{};
	\node (5p7)  [below of =5p6]     	{};
	\node (5p8)  [below of =5p7]     	{};
	\node (5p9)  [below of =5p8]     	{};
	
	\node[main node] (c32)  [below of =5p6]     	{$\overline{O\big(\mathcal{P}_{3,2}\big)}$};
	
	\node[main node] (c34a1)  [below of =5p8]     	{$\overline{O\big(\mathcal{P}_{3,4}^{1}\big)}$};
	
	\node (0p1)  [below of =1]     	{};
	\node (0p2)  [below of =0p1]     	{};
	\node (0p3)  [below of =0p2]     	{};
	\node (0p4)  [below of =0p3]     	{};
	\node (0p5)  [below of =0p4]     	{};
	\node (0p6)  [below of =0p5]     	{};
	\node (0p7)  [below of =0p6]     	{};
	\node (0p8)  [below of =0p7]     	{};
	\node (0p9)  [below of =0p8]     	{};
	\node[main node] (c31) [ below  of =0p4]       {$\overline{O\big(\mathcal{P}_{3,1}\big)}$};
	
\path
    (c37) edge   (c38)
    (c37) edge   (c310)    
    (c38) edge   (c39) 
    (c38) edge   (c311) 
    (c38) edge   (c312) 
    (c310) edge   (c311) 
    (c310) edge   (c312) 
    (c39) edge   (c36) 
    (c39) edge   (c317) 
    (c311) edge   (c36) 
    (c312) edge   (c36) 
    (c312) edge   (c319)
    (c36) edge   (c316a0)
    (c317) edge   (c313) 
    (c316a0) edge   (c313) 
    (c319) edge   (c313) 
    
    (c35) edge   (c34a-1)
    (c314) edge   (c315)
    (c314) edge   (c34a0)
    
    (c34*) edge   (c34a-1)
    (c34*) edge   (c34a0)
    (c34*) edge   (c33)
    (c316*) edge   (c316a0)
    (c316*) edge   (c313)
    (c315) edge   (c313)
    (c316*) edge   (c315)
    (c315) edge   (c32)
    (c318) edge   (c314) 
    (c318) edge   (c317) 
    (c320) edge   (c314) 
    (c320) edge   (c319) 
    (c34a-1) edge   (c32)
    (c34a0) edge   (c32)
    (c33) edge   (c32)
    (c33) edge   (c34a1)
    (c313) edge   (c31)
    (c34a1) edge   (c31)
    (c32) edge   (c31);

\end{tikzpicture}}

\label{fig2}
{Figure 2.}  Inclusion graph for orbit closures and dimension. 
\end{center}

\begin{table}[H]
    \centering
    \begin{tabular}{|l|l|l|l|l|}
			\hline
			${\mathcal{P} }$ & Multiplication table & $\mathrm{dim}\,\mathfrak{Der}$& $(\mathcal{P}, \cdot)$ & $(\mathcal{P}, \left\{-,-\right\})$ \ \\
			\hline
            \hline

			$\mathcal{P}_{3,1}$ & & $9$ & $A_{1}$  & $\mathcal{L}_{3,1}$	\\
			\hline
			
            $\mathcal{P}_{3,2}$ & $\left\{ e_{1},e_{2}\right\}             =e_{3}.$ & $6 $ & $A_{1}$ & $\mathcal{L}_{3,2}$
     
            \\ \hline $\mathcal{P}_{3,3}$ & $\left\{ e_{1},e_{2}\right\}             =e_{2},\left\{
            e_{1},e_{3}\right\} =e_{2}+e_{3}.$ & $4 $ & $A_{1}$ & $\mathcal{L}_{3,3}$
     
            \\ \hline $\mathcal{P}_{3,4}^{1}$ & $\left\{               e_{1},e_{2}\right\}
            =e_{2},\left\{ e_{1},e_{3}\right\} = e_{3}.$ & $ 6$ & $A_{1}$ & $\mathcal{L}_{3,4}^{1}$
            
            \\ \hline $\mathcal{P}_{3,4}^{\alpha\neq1}$ & $\left\{               e_{1},e_{2}\right\}
            =e_{2},\left\{ e_{1},e_{3}\right\} =\alpha e_{3}.$ & $4 $ & $A_{1}$ & $\mathcal{L}_{3,4}^{\alpha}$
            
            \\ \hline $\mathcal{P}_{3,5}$ & $\left\{ e_{1},e_{2}\right\}             =e_{3},\left\{
            e_{1},e_{3}\right\} =-2e_{1},\left\{ e_{2},e_{3}\right\}             =2e_{2}.$ & $3 $ & $A_{1}$ & $\mathcal{L}_{3,5}$
     
            \\ \hline $\mathcal{P}_{3,6}$ & $e_{1}\cdot e_{1}=e_{2},e_{1}\cdot          e_{2}=e_{3}.$ & $3 $ & $A_{4}$ & $\mathcal{L}_{3,1}$
     
            \\ \hline $\mathcal{P}_{3,7}$ & $e_{1}\cdot e_{1}=e_{1},e_{2}\cdot
            e_{2}=e_{2},e_{3}\cdot e_{3}=e_{3}.$ & $0 $ &$A_{5}$ & $\mathcal{L}_{3,1}$
     
            \\ \hline $\mathcal{P}_{3,8}$ & $e_{1}\cdot e_{1}=e_{1},e_{2}\cdot
            e_{2}=e_{2},e_{2}\cdot e_{3}=e_{3}.$ & $1 $ &$A_{6}$ & $\mathcal{L}_{3,1}$
     
            \\ \hline $\mathcal{P}_{3,9}$ & $e_{1}\cdot e_{1}=e_{1},e_{1}\cdot
            e_{2}=e_{2},e_{1}\cdot e_{3}=e_{3},e_{2}\cdot e_{2}=e_{3}.$ & $2 $ &$A_{8}$ & $\mathcal{L}_{3,1}$
     
            \\ \hline $\mathcal{P}_{3,10}$ & $e_{1}\cdot e_{1}=e_{1},e_{2}\cdot          e_{2}=e_{2}.$ & $1 $ & $A_{9}$& $\mathcal{L}_{3,1}$
     
            \\ \hline $\mathcal{P}_{3,11}$ & $e_{1}\cdot e_{1}=e_{1},e_{1}\cdot          e_{2}=e_{2}.$ & $2 $ & $A_{10}$& $\mathcal{L}_{3,1}$
     
            \\ \hline $\mathcal{P}_{3,12}$ & $e_{1}\cdot e_{1}=e_{1},e_{2}\cdot          e_{2}=e_{3}.$ & $2 $ &$A_{12}$ & $\mathcal{L}_{3,1}$
     
            \\ \hline $\mathcal{P}_{3,13}$ & $e_{1}\cdot e_{1}=e_{2}.$ & $5 $ &$A_{2}$ & $\mathcal{L}_{3,1}$
     
            \\ \hline $\mathcal{P}_{3,14}$ & $\left\{ 
            \begin{tabular}{l}
            $e_{1}\cdot e_{1}=e_{2},$ \\ 
            $\left\{ e_{1},e_{3}\right\} =e_{3}.$%
            \end{tabular}%
            \right. $ & $3 $ &$A_{2}$ & $\mathcal{L}_{3,4}^{0}$
     
            \\ \hline $\mathcal{P}_{3,15}$ & $\left\{ 
            \begin{tabular}{l}
            $e_{1}\cdot e_{1}=e_{2},$ \\ 
            $\left\{ e_{1},e_{3}\right\} =e_{2}.$%
            \end{tabular}%
            \right. $ & $4 $ &$A_{2}$ & $\mathcal{L}_{3,2}$
     
            \\ \hline $\mathcal{P}_{3,16}^{\alpha }$ & $\left\{ 
            \begin{tabular}{l}
            $e_{1}\cdot e_{2}=e_{3},$ \\ 
            $\left\{ e_{1},e_{2}\right\} =\alpha e_{3}.$%
            \end{tabular}%
            \right. $ & $4 $ &$A_{3}$ & $\mathcal{L}_{3,2}$
     
            \\ \hline $\mathcal{P}_{3,17}$ & $e_{1}\cdot e_{1}=e_{1},e_{1}\cdot
            e_{2}=e_{2},e_{1}\cdot e_{3}=e_{3}.$ & $4 $ &$A_{7}$ & $\mathcal{L}_{3,1}$
     
            \\ \hline $\mathcal{P}_{3,18}$ & $\left\{ 
            \begin{tabular}{l}
            $e_{1}\cdot e_{1}=e_{1},e_{1}\cdot e_{2}=e_{2},e_{1}\cdot          e_{3}=e_{3},$ \\ 
            $\left\{ e_{2},e_{3}\right\} =e_{2}.$%
            \end{tabular}%
            \right. $ & $2 $ &$A_{7}$ & $\mathcal{L}_{3,4}^{0}$
     
            \\ \hline $\mathcal{P}_{3,19}$ & $e_{1}\cdot e_{1}=e_{1}.$ & $4 $ &$A_{11}$ & $\mathcal{L}_{3,1}$
     
            \\ \hline $\mathcal{P}_{3,20}$ & $\left\{ 
            \begin{tabular}{l}
            $e_{1}\cdot e_{1}=e_{1},$ \\ 
            $\left\{ e_{2},e_{3}\right\} =e_{2}.$%
            \end{tabular}%
            \right. $ & $2 $ &$A_{11}$ & $\mathcal{L}_{3,4}^{0}$
            \\ \hline
    \end{tabular}
    \caption{The classification of the $3$-dimensional Poisson algebras.}
    \label{tab:clas3}
\end{table}

{\tiny
\begin{table}[H]
    \centering
    \begin{tabular}{|rcl|lll|}
\hline
\multicolumn{3}{|c|}{\textrm{Degeneration}}  & \multicolumn{3}{|c|}{\textrm{Parametrized change of basis}} \\
\hline
\hline

$\mathcal{P}_{3,17} $ & $\to$ & $\mathcal{P}_{3,13}  $ & $
g_{1}(t)= t^{-1}e_1 - t^{-2}e_2,$ & $
g_{2}(t)= e_2,$ & $
g_{3}(t)= e_3.$ \\
\hline

$\mathcal{P}_{3,19} $ & $\to$ & $\mathcal{P}_{3,13}  $ & $
g_{1}(t)= t^{-1}e_1+t^{-2}e_2,$ & $
g_{2}(t)= e_2,$ & $
g_{3}(t)= e_3.$ \\
\hline

$\mathcal{P}_{3,9} $ & $\to$ & $\mathcal{P}_{3,17}  $ & $
g_{1}(t)= e_1,$ & $
g_{2}(t)= t^{-1}e_2,$ & $
g_{3}(t)= e_3.$ \\
\hline

$\mathcal{P}_{3,9} $ & $\to$ & $\mathcal{P}_{3,6}  $ & $
g_{1}(t)= t^{-1}e_1-t^{-2}e_2+t^{-3}e_3,$ & $
g_{2}(t)= e_2,$ & $
g_{3}(t)= te_3.$ \\
\hline

$\mathcal{P}_{3,11} $ & $\to$ & $\mathcal{P}_{3,6}  $ & $
g_{1}(t)= t^{-1}e_1-t^{-2}e_2-t^{-3}e_3,$ & $
g_{2}(t)= e_2+t^{-1}e_3,$ & $
g_{3}(t)= e_3.$ \\
\hline

$\mathcal{P}_{3,12} $ & $\to$ & $\mathcal{P}_{3,6}  $ & $
g_{1}(t)= \frac{1}{2}t^{-1}e_1+\frac{1}{2}t^{-2}e_2+t^{-3}e_3,$ & $
g_{2}(t)= -e_1+4t^{-2}e_3,$ & $
g_{3}(t)= 2e_2-4t^{-1}e_3.$ \\
\hline

$\mathcal{P}_{3,12} $ & $\to$ & $\mathcal{P}_{3,19}  $ & $
g_{1}(t)= e_1,$ & $
g_{2}(t)= t^{-1}e_2,$ & $
g_{3}(t)= e_3.$ \\
\hline

$\mathcal{P}_{3,10} $ & $\to$ & $\mathcal{P}_{3,11}  $ & $
g_{1}(t)= e_1-t^{-1}e_2,$ & $
g_{2}(t)= t^{-1}e_2,$ & $
g_{3}(t)= e_3.$ \\
\hline

$\mathcal{P}_{3,10} $ & $\to$ & $\mathcal{P}_{3,12}  $ & $
g_{1}(t)= e_1,$ & $
g_{2}(t)= t^{-1}e_2+t^{-2}e_3,$ & $
g_{3}(t)= e_3.$ \\
\hline

$\mathcal{P}_{3,8} $ & $\to$ & $\mathcal{P}_{3,12}  $ & $
g_{1}(t)= e_1,$ & $
g_{2}(t)= t^{-1}e_2-t^{-2}e_3,$ & $
g_{3}(t)= e_3.$ \\
\hline

$\mathcal{P}_{3,8} $ & $\to$ & $\mathcal{P}_{3,11}  $ & $
g_{1}(t)= t^{-1}e_3,$ & $
g_{2}(t)= e_1,$ & $
g_{3}(t)= e_2.$ \\
\hline

$\mathcal{P}_{3,8} $ & $\to$ & $\mathcal{P}_{3,9}  $ & $
g_{1}(t)= e_1+t^{-1}e_2+t^{-2}e_3,$ & $
g_{2}(t)= -t^{-1}e_2-t^{-2}e_3,$ & $
g_{3}(t)= e_3.$ \\
\hline

$\mathcal{P}_{3,7} $ & $\to$ & $\mathcal{P}_{3,8}  $ & $
g_{1}(t)= e_1,$ & $
g_{2}(t)= e_2-t^{-1}e_3,$ & $
g_{3}(t)= t^{-1}e_3.$ \\
\hline

$\mathcal{P}_{3,7} $ & $\to$ & $\mathcal{P}_{3,10}  $ & $
g_{1}(t)= e_1,$ & $
g_{2}(t)= e_2,$ & $
g_{3}(t)= t^{-1}e_3.$ \\
\hline

$\mathcal{P}_{3,3} $ & $\to$ & $\mathcal{P}_{3,2}  $ & $
g_{1}(t)= t^{-1}e_1,$ & $
g_{2}(t)= e_2+t^{-1}e_3,$ & $
g_{3}(t)= -t^{-1}e_3.$ \\
\hline

$\mathcal{P}_{3,3} $ & $\to$ & $\mathcal{P}_{3,4}^{1}  $ & $
g_{1}(t)= e_1,$ & $
g_{2}(t)= te_2,$ & $
g_{3}(t)= e_3.$ \\
\hline

$\mathcal{P}_{3,4}^{\alpha\neq1} $ & $\to$ & $\mathcal{P}_{3,2}  $ & $
g_{1}(t)= t^{-1}e_1,$ & $
g_{2}(t)= e_2-t^{-1}(\alpha-1)^{-1}e_3,$ & $
g_{3}(t)= e_3.$ \\
\hline

$\mathcal{P}_{3,5} $ & $\to$ & $\mathcal{P}_{3,4}^{-1}  $ & $
g_{1}(t)= -2t^{2}e_1+te_2,$ & $
g_{2}(t)= -\frac{1}{2}t^{-2}e_1-\frac{1}{4}t^{-3}e_2,$ & $
g_{3}(t)= t^{-1}e_2+e_3.$ \\
\hline

$\mathcal{P}_{3,18} $ & $\to$ & $\mathcal{P}_{3,14}  $ & $
g_{1}(t)= t^{-1}e_1-t^{-2}e_2,$ & $
g_{2}(t)= te_2+e_3,$ & $
g_{3}(t)= t^{-2}e_3.$ \\
\hline

$\mathcal{P}_{3,20} $ & $\to$ & $\mathcal{P}_{3,14}  $ & $
g_{1}(t)= t^{-1}e_1+t^{-2}e_2,$ & $
g_{2}(t)= -te_2+e_3,$ & $
g_{3}(t)= t^{-2}e_3.$ \\
\hline

$\mathcal{P}_{3,14} $ & $\to$ & $\mathcal{P}_{3,15}  $ & $
g_{1}(t)= t^{-1}e_1,$ & $
g_{2}(t)= t^{-2}e_2-t^{-1}e_3,$ & $
g_{3}(t)= t^{-1}e_2.$ \\
\hline

$\mathcal{P}_{3,14} $ & $\to$ & $\mathcal{P}_{3,4}^{0}  $ & $
g_{1}(t)= e_1,$ & $
g_{2}(t)= te_3,$ & $
g_{3}(t)= e_2.$ \\
\hline

$\mathcal{P}_{3,20} $ & $\to$ & $\mathcal{P}_{3,19}  $ & $
g_{1}(t)= e_1,$ & $
g_{2}(t)= e_2,$ & $
g_{3}(t)= t^{-1}e_3.$ \\
\hline

$\mathcal{P}_{3,18} $ & $\to$ & $\mathcal{P}_{3,17}  $ & $
g_{1}(t)= e_1,$ & $
g_{2}(t)= e_2,$ & $
g_{3}(t)= t^{-1}e_3.$ \\
\hline

$\mathcal{P}_{3,15} $ & $\to$ & $\mathcal{P}_{3,2}  $ & $
g_{1}(t)= t^{-1}e_1,$ & $
g_{2}(t)= te_2 + e_3,$ & $
g_{3}(t)= -e_3.$ \\
\hline

$\mathcal{P}_{3,15} $ & $\to$ & $\mathcal{P}_{3,13}  $ & $
g_{1}(t)= te_1,$ & $
g_{2}(t)= t^{2}e_2,$ & $
g_{3}(t)= e_3.$ \\
\hline

$\mathcal{P}_{3,6} $ & $\to$ & $\mathcal{P}_{3,16}^{0}  $ & $
g_{1}(t)= e_1,$ & $
g_{2}(t)= te_2,$ & $
g_{3}(t)= te_3.$ \\
\hline

$\mathcal{P}_{3,16}^{\alpha} $ & $\to$ & $\mathcal{P}_{3,13}  $ & $
g_{1}(t)= t^2e_1+e_2,$ & $
g_{2}(t)= t^{-4}e_2,$ & $
g_{3}(t)= -\frac{1}{2}e_2+te_3.$ \\
\hline

    \end{tabular}
    \caption{Degenerations of $3$-dimensional Poisson algebras.}
    \label{tab:algdeg4}
\end{table}}

 \begin{table}[H]
    \centering
    \begin{tabular}{|l|l|}
\hline
\multicolumn{1}{|c|}{\textrm{Non-degeneration}} & \multicolumn{1}{|c|}{\textrm{Arguments}}\\
\hline
\hline
$\begin{array}{cccc}
     \mathcal{P}_{3,5} &\not \to&  \mathcal{P}_{3,3}, \mathcal{P}_{3,4}^{\alpha}(\alpha\neq-1) &  
\end{array}$ &
${\mathcal R}= \left\{ \begin{array}{l}
c_{21}'^1=-c_{12}'^1, c_{21}'^2=-c_{12}'^2, c_{21}'^3=-c_{12}'^3, c_{31}'^1=-c_{13}'^1 \\
c_{31}'^2=-c_{13}'^2, c_{31}'^3=-c_{13}'^3=c_{12}^2, c_{32}'^2=-c_{23}'^2=c_{13}^1, \\
c_{32}'^3=-c_{23}'^3=-c_{12}^1, c_{ij}^{k}=c_{ij}'^{k}=0 \textrm{ otherwise} 
\end{array} \right\}$\\
\hline
$\begin{array}{cccc}
     \mathcal{P}_{3,9} &\not \to&\mathcal{P}_{3,19} &  
\end{array}$ &
${\mathcal R}= \left\{ \begin{array}{l}
c_{11}^{1}, c_{11}^{2}, c_{11}^{3}, c_{22}^{3}\in \mathbb{C},  c_{21}^{2}=c_{12}^{2}=c_{13}^{3}=c_{31}^{3}=c_{11}^{1},\\
c_{21}^{3}=c_{12}^{3}, c_{11}^{1}c_{12}^{3}=c_{22}^{3}c_{11}^{2}, c_{ij}^{k}=c_{ij}'^{k}=0 \textrm{ otherwise} 
\end{array} \right\}$\\
\hline
$\begin{array}{cccc}
     \mathcal{P}_{3,11} &\not \to&  
     \mathcal{P}_{3,19} &  
\end{array}$ &
${\mathcal R}= \left\{ \begin{array}{l}
c_{11}^{2},c_{11}^3\in\mathbb{C}, c_{21}^2=c_{12}^2=c_{11}^1, c_{21}^{3}=c_{12}^{3},\\
c_{ij}^{k}=c_{ij}'^{k}=0 \textrm{ otherwise} 
\end{array} \right\}$\\
\hline
$\begin{array}{cccc}
     \mathcal{P}_{3,10} &\not \to&  
     \mathcal{P}_{3,17} &  
\end{array}$ &
${\mathcal R}= \left\{ \begin{array}{l}
c_{11}^{1}, c_{11}^{2}, c_{11}^{3} \in \mathbb{C},c_{21}^2=c_{12}^2, c_{21}^{3}=c_{12}^3,\\ c_{12}^2c_{22}^3=c_{12}^3c_{22}^2,
c_{ij}^{k}=c_{ij}'^{k}=0 \textrm{ otherwise} 
\end{array} \right\}$\\
\hline
$\begin{array}{cccc}
     \mathcal{P}_{3,16}^{\alpha} &\not \to&  \mathcal{P}_{3,2}, \mathcal{P}_{3,4}^{1}&  
\end{array}$ &
${\mathcal R}= \left\{ \begin{array}{l}
c_{11}^3\in\mathbb{C}, c_{21}^3=c_{12}^3, c_{21}'^3=-c_{12}'^3=-\alpha c_{12}^3\\
c_{ij}^{k}=c_{ij}'^{k}=0 \textrm{ otherwise} 
\end{array} \right\}$\\
\hline
$\begin{array}{cccc}
     \mathcal{P}_{3,18} &\not \to&  
\mathcal{P}_{3,4}^{\alpha}(\alpha\neq0),
\mathcal{P}_{3,16}^{\alpha}, \mathcal{P}_{3,19}&  
\end{array}$ &
${\mathcal R}= \left\{ \begin{array}{l}
c_{11}^1, c_{11}^2, c_{11}^3\in \mathbb{C}, c_{21}^2=c_{12}^2=c_{11}^1,  c_{31}^3=c_{13}^3=c_{11}^1,\\
c_{21}'^2=-c_{12}'^2, c_{21}'^3=-c_{12}'^3, c_{31}'^2=-c_{13}'^2, c_{31}'^3=-c_{13}'^3,\\
c_{32}'^2=-c_{23}'^2, c_{32}'^3=-c_{23}'^3, c_{12}'^2c_{23}'^3=c_{12}'^3c_{23}'^2,\\
c_{13}'^2c_{23}'^3=c_{13}'^3c_{23}'^2, c_{13}'^2c_{12}'^3=c_{12}'^2c_{13}'^3, c_{ij}^{k}=c_{ij}'^{k}=0 \textrm{ otherwise} 
\end{array} \right\}$\\
\hline
$\begin{array}{cccc}
     \mathcal{P}_{3,20} &\not \to&  
\mathcal{P}_{3,4}^{\alpha}(\alpha\neq0),
\mathcal{P}_{3,16}^{\alpha}, \mathcal{P}_{3,17}&  
\end{array}$ &
${\mathcal R}= \left\{ \begin{array}{l}
c_{11}^1, c_{11}^2, c_{11}^3\in \mathbb{C},
c_{21}'^2=-c_{12}'^2, c_{21}'^3=-c_{12}'^3, c_{31}'^2=-c_{13}'^2, \\
c_{31}'^3=-c_{13}'^3, c_{32}'^2=-c_{23}'^2, c_{32}'^3=-c_{23}'^3, c_{12}'^2c_{23}'^3=c_{12}'^3c_{23}'^2,\\
c_{13}'^2c_{23}'^3=c_{13}'^3c_{23}'^2, c_{13}'^2c_{12}'^3=c_{12}'^2c_{13}'^3, c_{ij}^{k}=c_{ij}'^{k}=0 \textrm{ otherwise} 
\end{array} \right\}$\\
\hline
    \end{tabular}
    \caption{Non-degenerations of $3$-dimensional Poisson algebras.}
    \label{tab:algndeg4}
\end{table}

\begin{table}[H]
    \centering
    \begin{tabular}{|rcl|lll|l|}
\hline
\multicolumn{3}{|c|}{\textrm{Degeneration}}  & \multicolumn{3}{|c|}{\textrm{Parametrized change of basis}} & \multicolumn{1}{|c|}{\textrm{Parametrized index}}\\
\hline
\hline
$\mathcal{P}_{3,16}^{*} $ & $\to$ & $\mathcal{P}_{3,15}  $ & $
g_{1}(t)= t^{-1}e_2 ,$ & $
g_{2}(t)= t^{-1}e_1+\frac{1}{2}t^{-2}e_3,$ & $
g_{3}(t)= e_2 + te_3.$ & $f(t) = t^{-1}$\\
\hline

$\mathcal{P}_{3,4}^{*} $ & $\to$ & $\mathcal{P}_{3,3}  $ & $
g_{1}(t)= e_1,$ & $
g_{2}(t)= e_2-(1+t)^{-1}te_3,$ & $
g_{3}(t)= e_2.$ & $f(t) = 1+t$\\
\hline

    \end{tabular}
    \caption{Degenerations of  the $3$-dimensional Poisson algebras for the families $\mathcal{P}_{3,4}^{*}$ and $\mathcal{P}_{3,16}^{*}$.}
    \label{tab:infseriesdeg}
\end{table}

\begin{table}[H]
    \centering
    \begin{tabular}{|rcl|l|}
\hline
\multicolumn{3}{|c|}{\textrm{Non-degeneration}} & \multicolumn{1}{|c|}{\textrm{Arguments}}\\
\hline
\hline

$\mathcal{P}_{3,4}^{*}$ &$\not \to$&  $\mathcal{P}_{3,13}$ 
& 
${\mathcal R}= \left\{ \begin{array}{l}
c_{21}'^{2}=-c_{12}'^{2}, c_{21}'^{3}=-c_{12}'^{3}, c_{31}'^{3}=-c_{13}'^{3},
c_{ij}^{k}=c_{ij}'^{k}=0 \textrm{ otherwise} 
\end{array} \right\}$\\
\hline

$\mathcal{P}_{3,16}^{*}$ &$\not \to$&  
$\mathcal{P}_{3,4}^{\alpha}$, $\mathcal{P}_{3,17}$, $\mathcal{P}_{3,19}$  &
${\mathcal R}= \left\{ \begin{array}{l}
c_{11}^{3}\in \mathbb{C}, c_{21}^{3}=c_{12}^{3}, c_{21}'^{3}=-c_{12}'^{3},
c_{ij}^{k}=c_{ij}'^{k}=0 \textrm{ otherwise} 
\end{array} \right\}$\\
\hline
    \end{tabular}
    \caption{Non-degenerations of  the $3$-dimensional Poisson algebras for the families $\mathcal{P}_{3,4}^{*}$ and $\mathcal{P}_{3,16}^{*}$.}
    \label{tab:infseriesndeg}
\end{table}

\begin{center}
	
	\begin{tikzpicture}[->,>=stealth,shorten >=0.05cm,auto,node distance=1.3cm,
	thick,
	main node/.style={rectangle,draw,fill=black!10,rounded corners=1.5ex,font=\sffamily \scriptsize \bfseries },
	rigid node/.style={rectangle,draw,fill=black!30,rounded corners=1.5ex,font=\sffamily \scriptsize \bfseries }, 
	poisson node/.style={rectangle,draw,fill=black!35,rounded corners=0ex,font=\sffamily \scriptsize \bfseries },
	ac node/.style={rectangle,draw,fill=white!20,rounded corners=0ex,font=\sffamily \scriptsize \bfseries },
	lie node/.style={rectangle,draw,fill=black!30,rounded corners=0ex,font=\sffamily \scriptsize \bfseries },
	style={draw,font=\sffamily \scriptsize \bfseries }]

	\node (2) at (1,4) {$n^2 - n$};
	\node (1) at (1,2) {$n^2-n-1$};
	\node (0)  at (1,0) {$n^2-n-2$};

    \node[ac node] (c21) at (-3,0) {${\mathcal{P}_{1,1}^n}$};
  
    \node[ac node] (c23) at (-4,2) {${\mathcal{P}_{1,3}^n}$};
    \node[lie node] (c24) at (-2,2) {${\mathcal{P}_{1,4}^n}$};

    \node[main node] (c20) at (-3,4) {${\mathcal{P}_{0}^n}$};
    \node[ac node] (c22) at (-5,4) {${\mathcal{P}_{1,2}^n}$};
    \node[lie node] (c25) at (-1,4) {${\mathcal{P}_{1,5}^n}$};
    
	\path[every node/.style={font=\sffamily\small}]

    
    
    
    (c22) edge  (c23)
    (c23) edge  (c21)
    (c25) edge  (c24)
    (c24) edge  (c21)
    (c25) edge  (c23)
    (c20) edge  (c24)
    ;

\end{tikzpicture}
\label{fig3}

{Figure 3.}  Graph of degenerations and non-degenerations.	
\end{center}

\end{document}